\DeclareMathAlphabet{\mathpzc}{OT1}{pzc}{m}{it}
\title{Geodesic Convexity of Small Neighborhood in the Space of K\"ahler Potentials}
\author{Xiuxiong Chen, Mikhail Feldman, Jingchen Hu}
\newcommand{\locationofpictures}{{D:/2018_TexPaper_Hasee/Geodesic_2018_03_FinalVersion/Pictures}}
\date{}
\def\test{0}
\newtheorem{theorem}{Theorem}[section]
\newtheorem{proposition}[theorem]{Proposition}
\newtheorem{lemma}[theorem]{Lemma}
\newtheorem{lemmasubsection}[theorem]{Lemma}
\newtheorem{remark}[theorem]{Remark}
\newtheorem{definition}[theorem]{Definition}
\newtheorem{problem}[theorem]{Problem}
\newtheorem{question}[theorem]{Conjecture/Question}
\numberwithin{equation}{section}
\newcommand{\ER}{\mathbb{R}}
\newcommand{\EZ}{\mathbb{Z}}
\newcommand{\EC}{\mathbb{C}}
\newcommand{\EN}{\mathbb{N}}
\newcommand{\SD}{\mathcal{L}}
\newcommand{\ddbar}{\partial\overline\partial}
\newcommand{\torange}{\triangleright}
\newcommand{\torange}{;\,}
\newcommand{\requirement}{{4}}
\newcommand{\jop}{\underline{J}}
\newcommand{\jop}{J}
\newcommand{\fractionSpace}{{{\underline{X}}}}
\newcommand{\fractionSpace}{X}
\newcommand{\alsoDependonBaseFrac}{{, \fractionSpace}}
\newcommand{\alsoDependonBaseFracornot}{{}}
\newcommand{\onlyDependonBaseFracornot}{{}}
\newcommand{\sS}{{\mathcal{S}}}
\newcommand{\SpaceofKahlerPotential}{{\mathcal{H}}}
\newcommand{\varkappaforgamma}{\varkappa}
\newcommand{\AD}{{\mathcal{A}}}
\newcommand{\ADF}{{\AD_F}}
\newcommand{\ADHF}{{\AD_{\hat F}}}
\newcommand{\LSMfdF}{{\Lambda_F}}
\newcommand{\ADFlambda}{{\AD_{F_\lambda}}}
\newcommand{\ADFnu}{{\AD_{F_\nu}}}
\newcommand{\WV}{{\mathcal{W}_V}}
\newcommand{\ir}{{\gamma}}
\newcommand{\constr}{{{\ir}}}
\newcommand{\half}{{1\over 2}}
\newcommand{\NHl}{{\mathcal{N}_{Hl}}}
\newcommand{\lf}{{\mathfrak{f}}}
\newcommand{\lh}{{\mathfrak{h}}}
\newcommand{\hlf}{{\hat{\mathfrak{f}}}}
\newcommand{\hlh}{{\hat{\mathfrak{h}}}}
\newcommand{\taubar}{{\overline{\tau}}}
\newcommand{\II}{{\mathcal{I}}}
\newcommand{\plusfractionSpace}{+\fractionSpace}
\newcommand{\fivethird}{{5{\plusfractionSpace}}}
\newcommand{\threethird}{{3{\plusfractionSpace}}}
\newcommand{\irtwothird}{{\ir-2}}
\newcommand{\irthreethird}{{\ir-1}}
\newcommand{\irfourthird}{{\ir}}
\newcommand{\irfivethird}{{\ir+1}}
\newcommand{\irseventhird}{{\ir+3}}
\newcommand{\spacecdot}{{\ \cdot\ }}
\newcommand{\dsLG}{{G}}
\newcommand{\dsG}{{\mathcal{G}}}
\newcommand{\ok}{{\kappa}}
\newcommand{\Uk}{{U_{\ok}}}
\newcommand{\Uok}{{U_\ok}}
\newcommand{\Ook}{{O_{\ok}}}
\newcommand{\Uvm}{{U_\varsigma}}
\newcommand{\vm}{{\varsigma}}
\newcommand{\oi}{{\iota}}
\newcommand{\mP}{{\mathcal{P}}}
\newcommand{\GF}{{\dsG_F}}
\newcommand{\hGF}{{\dsG_{\hat F}}}
\newcommand{\ADFi}{{\AD_F^{-1}}}
\newcommand{\ADHFi}{{\AD_{\hat F}^{-1}}}
\newcommand{\zeroptofp}{\mathfrak{z}_0}
\newcommand{\possion}{{\mathpzc{p}}}
\newcommand{\Foliation}{\mathfrak{Fl}}
\newcommand{\cD}{{\overline{D}}}
\newcommand{\ctrR}{{\overline{\trR}}}
\newcommand{\NashSmoothingOperator}{{\mathpzc{S}}}
\newcommand{\tNashSmoothingOperator}{{{\widetilde{\NashSmoothingOperator}}}}
\newcommand{\trR}{{L\widetilde{\mathcal{R}}}}
\newcommand{\tcC}{{L\widetilde{\mathcal{C}}}}
\newcommand{\rR}{{s\mathcal{R}}}
\newcommand{\cC}{{s\mathcal{C}}}
\newcommand{\rR}{{\widetilde{\mathcal{R}}}}
\newcommand{\cC}{{\widetilde{\mathcal{C}}}}
\newcommand{\trR}{{\mathcal{R}}}
\newcommand{\tcC}{{\mathcal{C}}}
\newcommand{\Rectangle}{{\APLbox}}
\newcommand{\RectFunc}{{\boxed{Func}}}
\newcommand{\Rectangle}{{\mathfrak{D}}}
\newcommand{\RectFunc}{{\mathfrak{F}}}
\newcommand{\exth}{{\widetilde h}}
\newcommand{\tmF}{{\mathcal{F}}}
\newcommand{\Str}{{\mathfrak{s}}}
\newcommand{\fourthird}{{4{\plusfractionSpace}}}
\newcommand{\mB}{{\mathcal{B}}}
\newcommand{\sF}{{\mathscr{G}}}
\newcommand{\sP}{{\mathscr{P}}}
\newcommand{\Support}{\text{Supp}}
\newcommand{\bvarphi}{{\Xi}}
\newcommand{\bvarphi}{{\boldsymbol\varphi}}
\newcommand{\varphit}{{\varphi_t}}
\newcommand{\vffnormfourthird}{{|\bvarphi,\phi|_{\fourthird}}}
\newcommand{\vffnormr}{{|\bvarphi,\phi|_{r+2}}}
\newcommand{\vffnormvarrho}{{|\bvarphi,\phi|_{\varrho+2}}}
\newcommand{\vfcf}{{{\bvarphi},\phi}}
\newcommand{\tT}{{{T}}}
\newcommand{\TtrR}{{{\mathfrak{T}}}}
\newcommand{\DtrR}{{{\mathfrak{P}}}}
\newcommand{\AtrR}{{{\mathfrak{A}}}}
\newcommand{\vecu}{\varpi}
\newcommand{\vecu}{{\boldsymbol{u}}}
\newcommand{\ut}{u_t}
\newcommand{\HtrR}{{{\mathfrak{H}}}}
\newcommand{\prodh}{{\mathpzc{h}}}
\newcommand{\twothird}{{2{\plusfractionSpace}}}
\newcommand{\rhf}{{\mathfrak{f}}}
\newcommand{\rhh}{{\mathfrak{h}}}
\newcommand{\rhb}{{\mathfrak{b}}}
\newcommand{\rhrho}{{\varrho}}
\newcommand{\rhg}{{\mathfrak{g}}}
\newcommand{\Noodle}{\mathfrak{V}}
\newcommand{\tlh}{\widetilde \lh}
\newcommand{\ApU}{\mathcal{U}}
\newcommand{\ApB}{\mathcal{B}_1}
\newcommand{\ApBdelta}{\mathcal{B}_\delta}
\newcommand{\tApU}{{\widetilde\ApU}}
\newcommand{\csdot}{{\ \cdot \ }}
\newcommand{\sN}{{\mathscr{N}}}
\begin{document}
\maketitle

\begin{abstract}
We show that, given $k> \requirement$, $0<\jop<\min\{{1\over 4},{k-\requirement\over 4}\}$, any point in the space of non-degenerate smooth K\"ahler  potentials has a small neighborhood with respect to $C^k$ norm, s.t. any two points in this neighborhood can be connected by a geodesic of at least $C^{k-\jop}$ regularity.

\end{abstract}

\tableofcontents


\section{Introduction}\label{Introduction}
\subsection{Motivations}
Let $(V, [\omega_0])$ be a K\"ahler manifold without boundary.  In 1982, E. Calabi \cite{calabi82}\cite{calabi85} proposed his now famous program of finding the
critical K\"ahler metric which is defined as critical point of the following Calabi energy functional:
\[
Ca(g) = \displaystyle \int_V\; R(g)^2 d v_g
\]
where $R(g)$ is the scalar curvature of the K\"ahler metric $g$ in the K\"ahler
class $[\omega_0].\;$ Since its inception, the problem of establishing existence
of this critical metric (constant scalar curvature K\"ahler metric,
extremal K\"ahler metric) has always been a core problem in K\"ahler geometry.  Over last few decades, many fundamental work emerged in connection with this renown
program of E. Calabi, noticeably,  the seminal work of
Calabi\cite{calabi58},  Yau \cite{Yau10} and more
recently, Chen-Donaldson-Sun\cite{cds12}.  We refer readers to the recent work
of J. Demailly \cite{Demailly} on K\"ahler Einstein metric problems
and Chen-Cheng \cite{cc17} on constant scalar curvature K\"ahler metric problems
for updated references on this program.\\

In a seminal paper \cite{Dona96}, S. K. Donaldson proposed a beautiful program to
attack the existence and uniqueness problem of constant scalar curvature K\"ahler (cscK) metrics.
Donaldson took the point of view that the space of K\"ahler potentials
\[\SpaceofKahlerPotential=\left\{\varphi\in C^\infty(V)\big|\omega_0+\sqrt{-1}\ddbar \varphi >0\right\}\]
 is formally a symmetric space of non-compact type; and
the scalar curvature function is the moment map from the space of almost complex structures compatible with a fixed symplectic form to the Lie algebra of certain infinite dimensional symplectic structure group which is exactly the space of all real valued smooth functions in the manifold.
With this in mind, the problem of finding a critical metric is reduced to finding zero of this moment map. This new point of view leads him to
define a $L^2$ type Riemannian metrics in space $\SpaceofKahlerPotential$:
\[
\|\delta \varphi\|^2_\varphi =  \int_M\; (\delta \varphi)^2 \omega_\varphi^n.
\]
It turns out that this has been introduced by Mabuchi \cite{Mabuchi}, Semmes \cite{Semmes} earlier.  Under this norm,  the equation for smooth geodesic
is
\begin{equation}\label{GeodesicEquationDependingont_201803272016}
{{\partial^2 \varphi}\over {\partial t^2}} - g_\varphi^{\alpha\bar \beta} ({{\partial \varphi}\over {\partial t}})_\alpha ({{\partial \varphi}\over {\partial t}})_{\bar \beta} = 0.
\end{equation}
This gives rise to the Levi-Civita connection in tangent space $T_\varphi \mathcal H$ for any $\varphi \in \mathcal {H}$. One can verify in a straightforward manner
that this space (under $L^2$ norm) is formally a symmetric space of non-compact type which carries a non-positive curvature. In \cite{ChenPrinceton}, Chen proved the existence
of $C^{1,1}$ geodesic segment (in the sense that the Laplacian of potential be bounded) and  used this to show that this is a metric space (Donaldson conjecture). Together with E. Calabi, Chen proved that this is a non-positively curved space
in the sense of Alexandrov.\\

Following T. Mabuchi \cite{Mabuchi}, we introduce the notion of the K-energy through its derivatives.  It is straightforward to prove that the following 1-form
\[
\alpha:  T_{\varphi} {\cal H} \rightarrow \mathbb{R},
\]
which sends a genuine ``vector" $\delta f\in T_{\varphi} {\cal H}  $  to
\[
\int_M\; (\delta f) (R_\varphi -\underline{R}) \omega_\varphi^n,
\]
is a closed $1-$form in the tangent space $T\cal H.\;$ Since $\cal H$ is linearly convex, this gives rise to an energy potential $\mathcal E\;$, whose critical points are precisely the constant scalar curvature
K\"ahler metrics (cscK) introduced by E. Calabi.  In literature, $\mathcal E$ is usually called ``K-energy functional" and one important property of $\mathcal E$ is that it is convex
over smooth geodesic segment.  In other words, we have
\[
{{d^2 {\mathcal E}}\over d t^2} = \displaystyle \int_M\;  | {\cal D}({{\partial \varphi} \over {\partial t}})|^2_\varphi \;\omega_\varphi^n \geq 0
\]
over a smooth geodesic segment $\varphi(t)$. Here $\cal D$  represents
second order pure covariant derivatives, i.e., for any function $f \in C^\infty(M)$ we have
\[
{\cal D}(f) = f_{,\alpha \beta} \;d z^\alpha \otimes d z^\beta.
\] The equality holds if and only if this geodesic path represents a path of holomorphic transformation.
It follows that if  the space of K\"ahler potentials is  convex by smooth geodesics, then cscK metrics must be unique up to holomorphic transformation. The uniqueness problem
goes back to E. Calabi where he proved it for K\"ahler Einstein metrics with negative scalar curvature. For K\"ahler Einstein metrics with positive scalar curvature, this is due to
Bando-Mabuchi\cite{BandoMabuchi}.
For cscK metric problem,  first such application comes from  \cite{ChenPrinceton} where the author proved the uniqueness of cscK
metric if the first Chern class is negative. The restriction of negative first Chern class is there to compensate the lack of higher order regularity of Chen's original solution.  This is generalized in \cite{Donaldson2001Project} by S. Donaldson to  algebraic manifolds with discrete automorphism groups.  It follows with T. Mabuchi for extremal K\"ahler metric \cite{MabuchiCPZ}  and Chen-Tian for general K\"ahler manifolds
\cite{ChenXXTianGang}.  To bypass the higher order regularity issue of geodesic segment,
the first named author conjectured that the K-energy is convex over $C^{1,1}$ geodesic segment. This is proved  in a fundamental paper by Berndtsson-Berman\cite{BerndtssonBerman} (c.f. Chen-Paun-Li\cite{ChenPaunLi}).  As a corollary, this leads to uniqueness of cscK metrics in the most general form in Berndtsson-Berman\cite{BerndtssonBerman} (c.f. Chen-Paun-Zeng\cite{ChenPaunZeng}). \\

The research on regularity of geodesic and applications has been very intense as illustrated by this important work of Berndtsson-Berman\cite{BerndtssonBerman}
(c.f. Chen-Paun-Zeng\cite{ChenPaunZeng}) and references therein. However, the attempts to improve regularity beyond Chen's $C^{1,1}$ solution has not been this successful up to now,
although important progress was made in J. Chu, B. Wenkove and V.Tossati \cite{Jianchun}, who proved that
Chen's weak geodesic has full Hessian bound.
More importantly, the examples constructed by
 Lempert-Vivas \cite{LempertLizVivas},  Lempert-Darvas\cite{LempertDarvas}  show
that 
$C^{1,1}$ regularity of geodesic is the optimal global regularity in general. Therefore, it is of great interest to understand the problem of  higher order regularity of geodesic
segments: To what extent, the original
conjecture of Donaldson, which predicts that $\cal H$ is convex by smooth geodesic segments, holds with some necessary modifications?
As a first step, we ask

\begin{question} \label{Frage:PerturbationofOnePtGeodesic20180402}
For any smooth potential $\varphi_0\in \SpaceofKahlerPotential$, does there exist a small neighborhood of $\varphi_0$ such that   any generic smooth potential in this neighborhood can be connected with $\varphi_0$ by a smooth and non-degenerate geodesic?
\end{question}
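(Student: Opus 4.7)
The plan is to answer Question~\ref{Frage:PerturbationofOnePtGeodesic20180402} affirmatively (in the $C^{k-\jop}$ form stated in the abstract, rather than $C^\infty$) via a Nash--Moser implicit function argument applied to \eqref{GeodesicEquationDependingont_201803272016}, regarded as a Dirichlet problem on $V\times[0,1]$ with data $\varphi(\cdot,0)=\varphi_0$ and $\varphi(\cdot,1)=\varphi_1$ for $\varphi_1$ close to $\varphi_0$ in a sufficiently strong $C^k$ norm. Linearizing at the trivial constant path $\Phi(z,t)\equiv\varphi_0$ is useless: the quadratic term then vanishes and only $\partial_t^2$ remains, which controls nothing in the $V$-directions. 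Instead, I would first construct a one-parameter family of non-degenerate smooth reference geodesics issuing from $\varphi_0$: for each smooth $f_0$ on $V$ that is not a Killing potential for $\omega_{\varphi_0}$, solve the short-time initial value problem for \eqref{GeodesicEquationDependingont_201803272016} with $\varphi(0)=\varphi_0$, $\varphi_t(0)=f_0$, obtaining a smooth non-degenerate segment $\bar\varphi_{f_0}(t)$. As $f_0$ varies in a small $C^\infty$-neighborhood of $0$, the endpoint $\bar\varphi_{f_0}(1)$ sweeps out a neighborhood of $\varphi_0$, which reduces the question to a perturbation problem around each reference $\bar\varphi=\bar\varphi_{f_0}$.

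The core analytic task is then to invert the linearization of
\[
\mathcal N(\varphi)=\varphi_{tt}-g_\varphi^{\alpha\bar\beta}\varphi_{t,\alpha}\varphi_{t,\bar\beta}
\]
at the reference geodesic $\bar\varphi$, with zero Dirichlet data at $t=0,1$. Realizing the equation on the complexified cylinder $V\times R$ (Semmes) exhibits this linearization as a real-valued degenerate elliptic operator whose unique characteristic direction is the kernel of the rank-$n$ complex Hessian of $\bar\varphi$. One cannot expect a loss-free inverse, and the natural bound is of the form
\[
\|v\|_{C^{k-\jop}}\le C_k\bigl(\|D\mathcal N(\bar\varphi)\,v\|_{C^k}+\|v|_{t=0,1}\|_{C^k}\bigr),
\]
obtained by leafwise Schauder estimates along the foliation by complexified geodesic leaves, combined with a careful analysis of the boundary compatibility conditions at $t=0,1$, with the loss $\jop$ controlled uniformly in $k$.

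Given such a tame inverse, together with tame commutator bounds for the smoothing operators $\sS_\theta$ and the standard quadratic-remainder estimates for $\mathcal N$, a Hamilton-type Nash--Moser iteration converges and produces, for every smooth $\varphi_1$ sufficiently close to $\varphi_0$ in $C^k$, a solution $\varphi$ of $\mathcal N(\varphi)=0$ of regularity $C^{k-\jop}$ with $\varphi|_{t=0}=\varphi_0$ and $\varphi|_{t=1}=\varphi_1$; non-degeneracy of the resulting geodesic is inherited from $\bar\varphi$ by closeness. The decisive obstacle is the construction of the tame right inverse: the degeneracy of the linearized operator is intrinsic to the geodesic equation, and keeping the loss $\jop$ uniform in $k$ while correctly enforcing boundary compatibility at $t=0,1$ is the delicate point. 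Everything else in the Nash--Moser machinery then proceeds along standard lines.
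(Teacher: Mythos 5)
Your first step---building a one-parameter family of reference geodesics by solving the short-time initial value problem for \eqref{GeodesicEquationDependingont_201803272016} with data $\varphi(0)=\varphi_0$, $\varphi_t(0)=f_0$---is precisely the step that does not work. The Cauchy problem for the geodesic equation in $\SpaceofKahlerPotential$ is ill-posed: Donaldson \cite{DonaldsonSymmetricSpace} gives explicit initial data on the flat torus for which no $C^3$ solution exists even for arbitrarily short time, and Rubinstein--Zelditch \cite{RubinsteinZelditchAnalytisity} give a nontrivial necessary condition on $(\varphi_0,f_0)$ for short-time solvability. So your family $\bar\varphi_{f_0}$ generically does not exist. (This is the fundamental contrast with finite-dimensional Riemannian geometry, and it is exactly why the paper's Theorem~\ref{thm:RegularNormalNeighborhoodExistence}---which produces a normal regular-geodesic neighborhood despite the failure of the exponential map---is surprising.)

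There is a second, independent issue in the ``tame right inverse'' step. The linearization $D\mathcal N(\bar\varphi)$ along a solution is a \emph{leafwise} Laplacian on the foliation of $\sS\times V$ by kernel directions of $\Omega_0+\sqrt{-1}\ddbar\bar\varphi$; Schauder estimates along the leaves control nothing transversal to the foliation, and there is no way to close such an estimate at the PDE level alone. The paper's route to transversal regularity is entirely different: it passes through Donaldson's reformulation of the HCMA Dirichlet problem as a family of holomorphic discs with boundary on a totally real submanifold $\Lambda_F\subset D\times\WV$ (Section~\ref{DirichletProblem}), which is a \emph{genuinely elliptic} free boundary problem in the disc variable, with the $V$-variable as a parameter. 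That ellipticity---not any leafwise Schauder bound---is what gives the $C^{\gamma-2}$ estimates of Proposition~\ref{prop:DonaldsonDiscDirichletProblem}. The Nash--Moser scheme in Section~\ref{Iteration} is then applied not to $\mathcal N$ directly but to a fixed-point map $\sP$ obtained by repeatedly solving these Dirichlet problems on a long strip $\trR$ and updating the fictitious $\theta$-direction boundary data; the fixed point is forced to be $\theta$-independent, which is what produces the geodesic. Replacing both of your main ingredients by these is not a cosmetic change.
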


In finite dimensional Riemannian manifold, this question  corresponds to existence of normal geodesic neighborhood.
Using exponential map, one obtains that existence of  normal geodesic neighborhood is equivalent to
 the short time existence of geodesic from any given point along  any
 tangential direction at that point.
 It is therefore natural to study the initial value problem for geodesics. Unfortunately, initial value problem for geodesics in the space of K\"ahler potentials is not well-posed:
 according to Donaldson \cite{DonaldsonSymmetricSpace}, initial value problem
 might not always have a regular (at least $C^3$) solution, even for a short time.
 In fact, an explicit example is given
in \cite{DonaldsonSymmetricSpace}, and a necessary condition for the solvability
was given by Y. Rubinstein and S. Zelditch
\cite{RubinsteinZelditchAnalytisity}. Somewhat surprisingly, we prove that there
always exists a normal regular-geodesic neighborhood.

\begin{theorem}\label{thm:RegularNormalNeighborhoodExistence}
For any smooth K\"ahler potential $\psi_0\in \SpaceofKahlerPotential$, and
$0<\alpha<1$,
there exists a small  neighborhood of $\psi_0$
in $C^{4,\alpha}$  norm, such that for
any potential $\psi$ in this neighborhood, there exists a
$C^{4}$ geodesic segment connecting $\psi_0$ and $\psi.$
\end{theorem}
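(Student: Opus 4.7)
The approach will be to recast the boundary value problem as the Dirichlet problem for the homogeneous complex Monge--Amp\`ere equation (HCMA). Extending $\varphi(x,t)$ to $\Phi(x,z)$ on $V\times A$ with $A=\{z\in\mathbb{C}:1\le|z|\le e\}$ and $t=\log|z|$, the geodesic equation~\eqref{GeodesicEquationDependingont_201803272016} is equivalent to $(\omega_0+\sqrt{-1}\partial\bar\partial\Phi)^{n+1}=0$ on $V\times A$ with $\Phi|_{|z|=1}=\psi_0$ and $\Phi|_{|z|=e}=\psi$. When $\psi=\psi_0$ the constant-in-$z$ extension $\Phi_0(x,z)\equiv\psi_0(x)$ already solves HCMA, so the task reduces to finding a solution near $\Phi_0$ as $\psi$ varies in a small $C^{4,\alpha}$ neighborhood of $\psi_0$, and showing this solution is $C^4$ on $V\times A$.

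First I would linearize the HCMA operator $N(\Phi)=(\omega_0+\sqrt{-1}\partial\bar\partial\Phi)^{n+1}$ at $\Phi_0$. The $(n+1)\times(n+1)$ matrix $M_0=\omega_0+\sqrt{-1}\partial\bar\partial\Phi_0$ has rank $n$ with kernel spanned by $\partial_z$, so the cofactor matrix picks out only the $z\bar z$ entry of any variation, and the linearization at $\Phi_0$ is, up to a positive factor, simply $\partial_t^2\eta=f$, solvable by two $t$-integrations with no loss of $V$-regularity. At a nearby non-constant $\Phi$, however, the null direction of $M(\Phi)$ tilts and depends on $\Phi$, and the linearization becomes a degenerate elliptic operator whose inversion loses derivatives in the directions transverse to its characteristic foliation. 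The standard inverse function theorem therefore cannot close, and a Nash--Moser scheme is required.

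I would then set up a Nash--Moser iteration on $V\times A$ in the H\"older scale: construct smoothing operators $S_\theta$ respecting the Dirichlet boundary data; prove tame solvability estimates of the form $\|L_\Phi^{-1}f\|_{k}\le C\|f\|_{k+r}$ with a fixed finite loss $r$, uniformly on a small neighborhood of $\Phi_0$; and run a modified Newton iteration $\Phi_{j+1}=\Phi_j+S_{\theta_j}L_{\Phi_j}^{-1}(-N(\Phi_j))$, tuning the smoothing parameters $\theta_j$ so that the iterates converge in $C^4$ whenever $\|\psi-\psi_0\|_{C^{4,\alpha}}$ is sufficiently small---i.e.\ so that the $\alpha$ excess in the boundary data absorbs the derivative loss inherent in $L_\Phi^{-1}$.

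The main obstacle will be the tame solvability step. One has to keep $\omega_0+\sqrt{-1}\partial\bar\partial_V\Phi_j$ positive along the iteration so that the null direction of $M(\Phi_j)$ remains one-dimensional; identify this tilted null direction as a field of $\Phi_j$-dependent vectors on $V\times A$; and then invert the degenerate linearization along the induced characteristic foliation while controlling H\"older norms of coefficients that already depend on two derivatives of $\Phi_j$. Closing all the Nash--Moser inequalities with only an $\alpha$-sized regularity gap, so that $C^{4,\alpha}$-small boundary data yields a genuinely $C^4$ geodesic, is the delicate quantitative heart of the argument.
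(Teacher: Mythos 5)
Your proposal is conceptually in the right territory (Nash--Moser, loss of derivatives along a characteristic foliation), but it diverges from the paper's route in a way that opens a genuine gap, and it also misses the paper's reduction step.

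First, the paper proves Theorem~\ref{thm:RegularNormalNeighborhoodExistence} as a one-line corollary of the stronger Theorem~\ref{thm:RegularityofNearConstantGeodesics}: replace the background form $\omega_0$ by $\omega_0+\sqrt{-1}\ddbar\psi_0$ and apply that theorem with $k=4+\alpha$ and a suitable $\jop$ (Remark~1.7). You re-derive a normal-neighborhood statement from scratch, which could be a legitimate alternative if the argument were sound, but it duplicates work that the paper factors out.

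Second, and more substantively, the engine you propose is a direct modified-Newton iteration
$\Phi_{j+1}=\Phi_j+S_{\theta_j}L_{\Phi_j}^{-1}(-N(\Phi_j))$ on the nonlinear HCMA operator $N(\Phi)=(\Omega_0+\sqrt{-1}\ddbar\Phi)^{n+1}$. The difficulty is that the ``degenerate linearization along the characteristic foliation'' you want to invert is only well defined when $\Phi_j$ is an \emph{exact} HCMA solution: the characteristic distribution is $\ker(\Omega_0+\sqrt{-1}\ddbar\Phi_j)$, which is one-dimensional and integrable precisely when $(\Omega_0+\sqrt{-1}\ddbar\Phi_j)^{n+1}=0$, i.e.\ $N(\Phi_j)=0$, in which case there is nothing left to correct and the Newton step is trivially zero. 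When $N(\Phi_j)\ne 0$, the linearized operator $L_{\Phi_j}(v)=(n+1)\sqrt{-1}\ddbar v\wedge(\Omega_0+\sqrt{-1}\ddbar\Phi_j)^n$ is either uniformly elliptic (if $(\Omega_0+\sqrt{-1}\ddbar\Phi_j)^{n+1}>0$), in which case there is no loss of derivatives and Nash--Moser is unnecessary---but the ellipticity constant degenerates as the iterates approach the HCMA solution, wrecking the uniform tame estimates---or it is not sign-definite and the Dirichlet problem for it is not well posed. Your sketch conflates ``near the degenerate solution'' with ``at an approximate solution,'' and this is exactly where a direct Newton scheme on HCMA fails to close.

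The paper sidesteps this by never linearizing the HCMA operator itself. At each step it solves the HCMA Dirichlet problem \emph{exactly} on a long strip $\trR\times V$ (biholomorphic to $D\times V$) using Donaldson's reduction to families of holomorphic discs with boundary on a totally real LS-graph (Proposition~\ref{prop:DonaldsonDiscDirichletProblem}), and the Nash--Moser iteration is run on the \emph{lateral boundary data} of the strip via the map $\sP(\bvarphi,\phi)=(\bvarphi,\mB_\bvarphi(\phi)-\phi)$. Because the HCMA is solved exactly at every step, the foliation and the leafwise harmonic extension are always well defined, and the tangent map of $\sP$ is the restriction of a leafwise harmonic extension to $\Rectangle\times V$; its invertibility comes from a decay-in-the-strip/Harnack argument, not from inverting a degenerate PDE. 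The derivative loss fed into the Nash--Moser scheme is the $C^{\gamma}\to C^{\gamma-2}$ loss in the disc-problem estimate (\ref{CrEstimateofPhiDisc})--(\ref{CrLipPotentialDisc}), which is a stability statement for \emph{exact} HCMA solutions under boundary perturbations, not a tame inverse for $L_{\Phi_j}$ at non-solutions. To make your proposal rigorous you would either have to adopt this two-tier structure (exact HCMA solve plus boundary iteration) or specify precisely what $L_{\Phi_j}^{-1}$ means and prove a uniform tame estimate for it across approximate solutions; as written, that central step is missing.
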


 Given the huge success of Donaldson's program on the space of K\"ahler potentials, it is clearly very important to continue our line of research, from Theorem 1.2, to improve the regularity of geodesic segment
 beyond  $C^{1,1}$.
 There are two natural approaches conceptually. 

The first is to view geodesic segment as the length minimizer,  and try to improve regularity of this ``minimizer"  from  $C^{1,1}$ as much as possible. Hopefully we can improve the regularity  to the extent that K\"ahler potentials along geodesic path are nearly smooth
in some appropriate geometric sense. In this approach, the crucial step is to obtain $C^2$ continuity  of geodesic segment. For this approach, the following problem will be interesting:

\begin{question}
Given a non-degenerate $C^{1,1}$ geodesic segment with smooth end points, can we show the geodesic is actually $C^2$  continuous?
\end{question}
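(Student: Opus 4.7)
The plan is to lift the geodesic to a homogeneous complex Monge--Amp\`ere problem and exploit the holomorphic foliation forced by the non-degeneracy hypothesis. Write $\Phi(x,z)$ on $V\times\Sigma$ with $z=t+\sqrt{-1}s$ and $\Sigma\subset\EC$ a strip, so that $\Phi$ is $s$-invariant and equation (\ref{GeodesicEquationDependingont_201803272016}) becomes
\[
(\pi^*\omega_0+\sqrt{-1}\ddbar \Phi)^{n+1}=0,\qquad \Omega:=\pi^*\omega_0+\sqrt{-1}\ddbar\Phi\geq 0.
\]
Non-degeneracy translates to the statement that $\Omega$ has constant maximal rank $n$ with a one-dimensional complex kernel at every point of $V\times\Sigma$, and the smoothness of the endpoints $\varphi_0,\varphi_1$ provides smooth Dirichlet data on $V\times\partial\Sigma$. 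The overall strategy is to promote $\ker\Omega$ to a genuine $C^1$ foliation by Riemann surfaces with smooth boundary, prove each leaf is smooth up to the boundary, and then recover $C^2$ regularity of $\Phi$ from joint smoothness in leaf and transverse directions.

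The first step is to verify integrability of the line distribution $\ker\Omega$ in an a.e.\ sense: closedness of $\Omega$ combined with the pointwise constant rank condition yields this already for $C^{1,1}$ data, producing a measurable foliation whose leaves are Riemann surfaces with boundaries lying on $V\times\partial\Sigma$. Along each leaf $L$, the pullback of $\Omega$ vanishes identically, so $\Phi|_L$ is pluriharmonic on $L$ with smooth Dirichlet data on the boundary arc $L\cap(V\times\partial\Sigma)$, and classical boundary regularity for harmonic functions on disks with smooth boundary data gives leafwise smoothness up to the boundary. The second step, which is the crux, is to upgrade the family of leaves to a $C^1$ family. One parametrizes leaves by their intersection points with a transversal slice $V\times\{t_0\}$ (transversality is guaranteed by the non-degeneracy), and studies the map sending a parameter to the corresponding disk as a Dirichlet problem for a family of planar domains with varying boundary arcs. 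The aim is to invoke an implicit-function-theorem argument whose linearization is an elliptic boundary-value problem on the disk, with the smooth endpoint data feeding into parameter-continuity. The third step is then elementary: once the foliation is $C^1$ and each leaf is smooth, the mixed second derivatives of $\Phi$ are obtained by combining the leafwise harmonic expansion with the transverse $C^1$ dependence, yielding continuity of the full Hessian $\ddbar\Phi$ and hence $C^2$ regularity of $\varphi_t$ jointly in $(t,x)$ — including in particular $\partial_t^2\varphi$, which by the geodesic equation is controlled once the spatial Hessian is continuous.

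The main obstacle is the second step: promoting the a.e.\ kernel distribution of a merely $L^\infty$ matrix field $\Omega$ to a continuously and then continuously-differentiably varying family of smooth leaves without any a priori estimate beyond $C^{1,1}$. The examples of Lempert--Vivas and Lempert--Darvas show that in the absence of non-degeneracy this upgrade is flatly impossible, so any valid argument must use the non-degeneracy in an essential, quantitative way — plausibly via a partial-regularity result in the spirit of Chen--Tian for non-degenerate solutions of the homogeneous complex Monge--Amp\`ere equation, adapted to the product geometry $V\times\Sigma$ and the smooth boundary data. A further subtlety is that non-degeneracy is assumed only for the given geodesic, so the implicit-function argument must propagate along a path of configurations that each preserve the constant-rank condition; while this condition is $C^0$-open, controlling it under genuinely $C^{1,1}$-level perturbations — where a priori the kernel only makes sense as a measurable object — appears to require a new estimate coupling the eigenvalue gap of $\Omega$ to the boundary data. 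I expect this eigenvalue-gap estimate, rather than any of the PDE steps on an individual leaf, to be the decisive ingredient.
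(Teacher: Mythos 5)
This statement is posed in the paper as an open question (a conjecture), not a theorem: the paper offers no proof of it, and indeed the whole point of listing it is that the $C^{1,1}\to C^2$ upgrade for a non-degenerate geodesic is not known. Your text is therefore correctly read as a research plan rather than a proof, and by your own account it is incomplete: you explicitly identify Step 2 (upgrading the a.e.\ kernel distribution of the merely $L^\infty$ form $\Omega$ to a $C^1$ family of smooth leaves) as "the crux" and state that it "appears to require a new estimate" which you do not supply. Since every subsequent step depends on having that foliation with quantitative transverse regularity, nothing is actually established.

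Beyond the acknowledged gap, two of the steps you present as routine are not. First, the claim that closedness of $\Omega$ plus constant rank "yields integrability already for $C^{1,1}$ data" is unjustified: the kernel of $\Omega$ is only defined almost everywhere when $\ddbar\Phi$ is merely $L^\infty$, and the Frobenius-type argument the paper itself uses (Section 2, following Donaldson and Semmes) requires $\Phi\in C^3$ precisely so that the kernel distribution is a genuine $C^1$ line field; for continuous or measurable distributions integrability is a delicate open-ended matter, not a formal consequence of $d\Omega=0$. Second, the leafwise boundary-regularity step is circular as stated: $\Phi|_L$ is harmonic on the leaf with boundary data given by the smooth endpoint potentials \emph{composed with the boundary parametrization of the leaf}, and the regularity of that parametrization (equivalently, of the attaching map of the holomorphic disc to the totally real boundary manifold) is exactly what is unknown. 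So the smooth Dirichlet data cannot be fed into classical boundary Schauder theory until the leaf itself is known to be regular up to the boundary — which is again Step 2. The strategy (foliation by holomorphic curves, leafwise harmonicity, transverse regularity) is the natural one and is consistent with how the paper attacks the perturbative Theorem 1.8, but in the non-perturbative setting of this question the decisive estimates are missing, and the question remains open.
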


In light of the work of J. Chu, B. Wenkove and V.Tossati \cite{Jianchun}, and L. Lempert, L. Vivas, T. Darvas \cite{LempertLizVivas}, \cite{LempertDarvas},
the new assumption is that the $C^{1,1}$ geodesic segment is non-degenerate. In fact, we believe
it is sufficient to assume that the geodesic segment is non-degenerate near its
two end points.\\

The second is to envision or conceive certain partially high order regularity statement which is slightly weaker than the original stated version
 of Donaldson's conjecture and try to prove it by
 method of continuity.  For this approach, the following problem is critically important:

\begin{question}\label{Frage:NearlySmooth201803302327} In the space of K\"ahler potentials, for two generic smooth potentials, does there exist a nearly smooth geodesic segment connecting them?  
\end{question}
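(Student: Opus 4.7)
The question is open-ended; the natural plan is to combine the local existence result of this paper with a continuity argument along a smooth path of potentials, while being careful about the ``genericity'' that rules out the Lempert-Vivas-Darvas obstructions to global higher regularity.

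First I would fix a smooth family $\psi_s \in \SpaceofKahlerPotential$, $s \in [0,1]$, with $\psi_0 = \varphi_0$ and $\psi_1 = \varphi_1$ (always possible since $\SpaceofKahlerPotential$ is open and convex in $C^\infty(V)$), and define
\[
I = \{s \in [0,1] \,:\, \text{there exists a non-degenerate $C^{k-\jop}$ geodesic from $\varphi_0$ to $\psi_s$}\}.
\]
The point $0$ lies in $I$. The plan is then to show that $I$ is both open and closed in $[0,1]$, so that $1 \in I$ and hence $\varphi_0$ and $\varphi_1$ are joined by a nearly smooth geodesic.

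For openness at $s \in I$ with geodesic $\Gamma_s$, I would parameterize the geodesic boundary value problem as an operator equation (the homogeneous complex Monge-Amp\`ere equation on $V \times [0,1] \times S^1$ with boundary data $\varphi_0, \psi_s$). Non-degeneracy of $\Gamma_s$ makes the linearization uniformly elliptic, so an implicit function theorem should produce a one-parameter family of geodesics as $s'$ varies near $s$, in the spirit of the local theorem of this paper but with endpoints shifted along a long geodesic rather than lying in a single small neighborhood. For closedness, a sequence $\Gamma_{s_n}$ with $s_n \to s_\infty$ requires a priori estimates: the Chu-Tosatti-Weinkove Hessian bound \cite{Jianchun} supplies $C^{1,1}$ control, and non-degeneracy bootstraps this to $C^{k-\jop}$ by standard elliptic theory. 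The crucial step is preserving non-degeneracy in the limit.

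The main obstacle, where ``generic'' must do real work, is global control of non-degeneracy. Lempert-Vivas \cite{LempertLizVivas} and Lempert-Darvas \cite{LempertDarvas} show that non-generic boundary pairs genuinely force loss of higher regularity, so the strategy must identify a residual (or at least dense $G_\delta$) subset of smooth $\varphi_1$ for which such breakdown can be avoided. A plausible route is to establish the claim unconditionally when $\varphi_0, \varphi_1$ are close in $C^k$ norm (already handled by the local theorem of this paper), then show by a Baire-category/transversality argument that after an arbitrarily small perturbation of $\varphi_1$ the path $\psi_s$ can be chosen transverse to the degeneracy stratum in the space of admissible boundary data. An alternative, perhaps complementary, approach is to start from Chen's $C^{1,1}$ weak geodesic and bootstrap its regularity under a generic non-degeneracy hypothesis on the endpoints, appealing to Question 1.3 in the excerpt as an intermediate target. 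In either approach the local result of this paper supplies the building blocks, but the global genericity architecture remains the essential difficulty.
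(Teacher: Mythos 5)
This statement is not a theorem of the paper: it is posed as an open conjecture/question (Question \ref{Frage:NearlySmooth201803302327}), and the authors explicitly describe their main result, Theorem \ref{thm:RegularityofNearConstantGeodesics}, as only ``a first step in this approach.'' There is no proof in the paper to compare against, so your proposal must be judged as a research program, and as such it has two concrete gaps.

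First, the openness step is not a routine implicit function theorem. You assert that non-degeneracy of the geodesic makes the linearization ``uniformly elliptic,'' but non-degeneracy here means positivity of $\omega_0+\sqrt{-1}\ddbar\varphi$ on the $V$-slices; the HCMA equation $(\Omega_0+\sqrt{-1}\ddbar\Phi)^{n+1}=0$ on the $(n+1)$-dimensional total space is degenerate by its very nature (the form has a one-dimensional kernel), and its linearization is a leafwise Laplace equation along the associated foliation. This is precisely why the present paper, even for endpoints in a tiny $C^k$ neighborhood, must run a Nash--Moser iteration with loss of derivatives rather than a standard implicit function theorem. Any continuity-method openness step would face the same loss of regularity, and would additionally need uniform control of the foliation along the whole path, which is not available. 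Second, and more fundamentally, the closedness step has no mechanism: the Lempert--Vivas and Lempert--Darvas examples, together with Hu's result quoted in Remark 1.5 (analytic potentials $\psi_k\to 0$ in every $C^B$ norm that cannot be joined to $0$ by a smooth non-degenerate geodesic), show that the ``degeneracy stratum'' you propose to avoid by transversality is not small in any currently understood sense, and no a priori estimate is known that preserves non-degeneracy in a limit. Finally, note that the question deliberately asks only for a \emph{nearly smooth} geodesic (smooth and non-degenerate off a lower-dimensional singular set), a weaker conclusion designed to accommodate exactly the singularities your scheme tries to exclude; a credible attack would have to engage with the structure of that singular set rather than aim for global $C^{k-\jop}$ regularity, which is known to fail in general.
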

While we expect the K\"ahler forms involved in geodesic segment to be smooth almost everywhere, the nature of singularities
must be part of the puzzle to be figured out together when attacking the full extent of Donaldson's conjecture.
Indeed, Theorem 1.2 or Theorem 1.8 is a first step in this approach.

\subsection{Main Result and Technique}
A path $\psi:[0,1]\rightarrow \SpaceofKahlerPotential$
 can be considered as a function on $[0,1]\times V$, and also as a function
 on $[0,1]\times \ER\times V$, with a dummy variable in $\ER$ direction.
 When $\psi$ is considered as a function on $[0,1]\times \ER\times V$ or
 $[0,1]\times V$, we denote it by $\Psi$. It was shown in \cite{DonaldsonSymmetricSpace}, that  $\psi$ satisfying geodesic equation is equivalent to that $\Psi$ satisfying a homogenous complex Monge-Ampere equation (which we abbreviate as HCMA equation)  on $[0,1]\times \ER\times V.$

Instead of studying the regularity of general geodesics, or the regularity of
solution to general HCMA equations, a technique was developed in
\cite{DonaldsonHolomorphicDiscs} to study perturbations  of
 boundary values in
Dirichlet Problem for HCMA equation on $D\times V$, where $D$ is the unit disc on complex plane. More precisely the following problem was considered:
 \begin{problem}[Dirichlet Problem of HCMA equation on $D\times V$ with boundary value $F$]
\label{prob:DirichletonD}
Given a K\"ahler manifold
$(V,\omega_0)$, with $\omega_0>0$, and  a smooth real valued function $F$
on $\partial D\times V$ satisfying
\begin{equation}\label{nondegenBdryF}
\omega_0+\sqrt{-1}F(\tau, \cdot)_{i\overline j}dz^i\wedge\overline{dz^j}>0,
\ \ \ \ \ \  \text{on } \{\tau\}\times V\;\mbox{ for all }\;\tau \in \partial D,
\end{equation}
 and denoting the trivial projection from $D\times V$ to $V$ by $\pi_V$, and
 $\Omega_0:=\pi_V^\ast(\omega_0)$, we look for
 $\Phi\in C^2(D\times V\torange \ER)\cap C^0(\overline D\times V\torange \ER)$, solving
\[\left\{
\begin{array}{cc}
(\Omega_0+\sqrt{-1}\ddbar\Phi)^{n+1}=0,&\text{in }D\times V;\\
\Phi=F		,						&\text{on } \partial D\times V;\\
\omega_0+\sqrt{-1}\Phi(\tau, \cdot)_{i\overline j}dz^i\wedge\overline{dz^j}>0, & \text{on } \{\tau\}\times V,\ \ \  \forall\tau \in D .
\end{array}
\right.\]
\end{problem}
In \cite{DonaldsonHolomorphicDiscs}, the above problem was reduced to a family of
elliptic free boundary problems, whose linearized problem is a family of
Riemann-Hilbert problems on the disc. Then using the solvability and stability of elliptic
problem, Donaldson proved that the set of smooth functions $F$, for which a smooth
solution to Problem \ref{prob:DirichletonD}
exists, is open in $C^\infty(\partial D\times V)$ with respect to $C^2$ topology.

Inspired by the above result, we try to address the following question
\begin{question}\label{Frage:ShortimpliesSmooth}
Does any point in $\SpaceofKahlerPotential$ possess a  neighborhood, in $
C^2$ norm, that is geodesically convex by $C^\infty$ non-degenerate geodesics?
\end{question}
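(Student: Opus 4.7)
\textbf{Proof proposal for Question~\ref{Frage:ShortimpliesSmooth}.} The plan is to translate the geodesic-connection question into Donaldson's Dirichlet problem (Problem~\ref{prob:DirichletonD}) on $D\times V$, identify the reference potential $\psi_0$ with a trivial smooth foliation of $D\times V$ by holomorphic discs, and then perturb by a Nash--Moser scheme. Given smooth $\psi_0,\psi_1\in\SpaceofKahlerPotential$ that are $C^2$-close, I would choose boundary data $F$ on $\partial D\times V$ equal to $\psi_0$ on the left half of $\partial D$, equal to $\psi_1$ on the right, and smoothly interpolated so that~(\ref{nondegenBdryF}) is preserved; up to reparametrization, a $\tau$-symmetric solution of this Dirichlet problem restricts to the desired geodesic on a diameter. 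For $\psi_1=\psi_0$ the canonical solution is $\Phi(\tau,\cdot)\equiv\psi_0$, and the associated holomorphic foliation consists of the constant discs $\tau\mapsto(\tau,p)$, $p\in V$, which is manifestly smooth and non-degenerate and thus is the ideal base point for perturbation.

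Following~\cite{DonaldsonHolomorphicDiscs}, I would linearize the elliptic free-boundary problem for the foliation at this trivial base point. The linearization is a family of Riemann--Hilbert problems on $D$ parameterized by $p\in V$, whose finite-dimensional cokernel (coming from disc automorphisms and from reparametrization of the family) is killed by standard normalizing conditions. The resulting operator admits a right inverse satisfying tame estimates of the Nash--Moser type
\[
\|\text{correction}\|_{C^{k}}\le C_k\bigl(\|\text{data}\|_{C^{k+\ell}}+\|\text{data}\|_{C^{2}}\,\|\text{base}\|_{C^{k+\ell}}\bigr)
\]
with a fixed derivative loss $\ell$. Combined with the smoothing operators $\NashSmoothingOperator$ already foreshadowed by the paper's notation, a standard Nash--Moser iteration converges whenever $\|\psi_1-\psi_0\|_{C^2}$ is sufficiently small, and outputs a smooth non-degenerate foliation, hence a smooth non-degenerate HCMA solution, whose restriction to a diameter is the sought geodesic.

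The main obstacle, and presumably the reason only a finite-regularity version appears as the paper's main theorem, is to make these tame estimates work uniformly over a $C^2$-ball rather than a high $C^k$-ball of endpoint perturbations. Nash--Moser iterations customarily measure smallness in one norm but allow data in much higher norms; pushing the smallness threshold down to $C^2$ forces the iteration itself to absorb all high-order information about $\psi_1$ via the smoothing operators, which demands very tight interpolation inequalities between consecutive steps of the scheme. A closely related difficulty is to propagate \emph{quantitative} non-degeneracy of the foliation through every iterate: a single disc on which $\omega_0+\sqrt{-1}\ddbar\Phi(\tau,\cdot)$ degenerates would destroy the ellipticity that the whole free-boundary reduction rests on, so the iteration must carry a positive lower bound on the associated K\"ahler form controlled purely by $\|\psi_1-\psi_0\|_{C^2}$. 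If both points can be arranged, the argument closes; if not, one is forced to strengthen the topology on the neighborhood to a higher $C^k$ and accept a finite loss in regularity of the resulting geodesic, which is exactly the content of the abstract.
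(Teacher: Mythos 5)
You are trying to prove an affirmative answer to a statement that the paper itself records as \emph{false}. Question~\ref{Frage:ShortimpliesSmooth} is posed as an open question, and the remark following Theorem~\ref{thm:RegularityofNearConstantGeodesics} states that J.~Hu (in \cite{RegularityofGeodesics}) answers it negatively: on the flat torus there is a sequence of analytic potentials $\psi_k$ with $|\psi_k|_B\to 0$ for every fixed integer $B$, none of which can be joined to $0$ by a $C^\infty$ non-degenerate geodesic. So no Nash--Moser scheme, however carefully tuned, can output a smooth non-degenerate geodesic for \emph{all} potentials in a $C^2$-ball (or even in a ball of any fixed finite regularity). The obstruction you flag in your last paragraph --- that the tame estimates force smallness to be measured in ever higher norms --- is not a removable technicality but the symptom of a genuine obstruction; the correct conclusion is exactly the paper's Theorem~\ref{thm:RegularityofNearConstantGeodesics}, which trades the $C^2$-neighborhood for a $C^k$-neighborhood and the $C^\infty$ geodesic for a $C^{k-J}$ one.

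There is also a structural gap in your reduction to Problem~\ref{prob:DirichletonD}. Placing $\psi_0$ on one half of $\partial D$ and $\psi_1$ on the other and restricting a $\tau$-symmetric HCMA solution to a diameter does not produce a geodesic: the geodesic equation~(\ref{GeodesicEquationDependingont_201803272016}) is equivalent to the HCMA equation for a potential that is \emph{independent of the imaginary part} of the complex time, not to the restriction of a general HCMA solution to a real slice. This is why the paper works on the long strip $\trR\subset[0,1]\times\ER$, prescribes $\varphi_0,\varphi_1$ only on $\{t=0,1\}$, and runs an iteration (the map $\sP$ of Section~\ref{Iteration}) whose fixed point is forced to be $\theta$-periodic and hence, by the uniqueness theorem for the Dirichlet problem, $\theta$-independent. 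Your perturbation-off-the-constant-foliation idea and the tame estimates for the linearized Riemann--Hilbert problems are genuinely the paper's machinery (Proposition~\ref{prop:DonaldsonDiscDirichletProblem} and Lemma~\ref{lem:MoserTheorem}), but they prove Theorem~\ref{thm:RegularityofNearConstantGeodesics}, not an affirmative answer to Question~\ref{Frage:ShortimpliesSmooth}.
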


In this paper, we provide a partial answer to the above question.

\begin{theorem}\label{thm:RegularityofNearConstantGeodesics}
Let $(V,\omega_0)$ be a compact smooth K\"ahler manifold, with  $\omega_0>0$, and let
$k> \requirement$, $0<\jop<\min\{{1\over 4},{k-\requirement\over 4}\}$. There exists
$\varepsilon=\varepsilon(V,\omega_0,k,\jop)>0$ such that if
$\varphi_0, \varphi_1\in C^k(V\torange \ER)$
 satisfy $|\varphi_0|_k+|\varphi_1|_k<\varepsilon$, then
there exists  a non-degenerate geodesic $\Psi$ connecting $\varphi_0$ and $\varphi_1$, with
\[\Psi\in C^{k-\jop}([0,1]\times V\torange \ER),\]
and
\[|\Psi|_{k-\jop;[0,1]\times V}\leq C(V,\omega_0, k,\jop)\left(|\varphi_0|_k+|\varphi_1|_k\right).\]
\end{theorem}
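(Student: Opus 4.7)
The plan is to convert the geodesic problem into a Dirichlet problem for the homogeneous complex Monge--Amp\`ere (HCMA) equation on the cylinder $[0,1]\times S^1\times V$ with $S^1$-invariant data (so that the solution descends to a function $\psi(t,z)$ on $[0,1]\times V$), and then solve it perturbatively via a combination of Donaldson's holomorphic-disc foliation technique from \cite{DonaldsonHolomorphicDiscs} and a Nash--Moser inverse function scheme anchored at the trivial solution. The boundary conditions are $\psi(0,\cdot)=\varphi_0$ and $\psi(1,\cdot)=\varphi_1$; these are the natural analogue of the boundary data $F$ in Problem~\ref{prob:DirichletonD}, and when $\varphi_0=\varphi_1\equiv 0$ the solution is $\Psi\equiv 0$, which is non-degenerate because $\omega_0>0$ and so the positivity condition \eqref{nondegenBdryF} is automatic for all sufficiently small data.

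Near this base point, the linearisation of the HCMA operator $\Psi\mapsto(\pi_V^\ast\omega_0+\sqrt{-1}\ddbar\Psi)^{n+1}$ decomposes, along the trivial holomorphic-disc foliation $\{[0,1]\times S^1\times\{p\}\}_{p\in V}$, into a family of scalar Riemann--Hilbert problems on the annulus, parametrised smoothly by points of $V$. Uniform Schauder estimates for this family assemble into a tame right inverse with a fixed H\"older derivative loss. At a non-degenerate approximate solution nearby, the same construction applies with the foliation perturbed, the leaves remaining close to trivial as long as $\Psi$ is $C^k$-small, and the right inverse depending tamely on the $C^k$ norm of the current iterate. Combining these tame linear solves with a sequence of smoothing operators in the standard Nash--Moser manner produces, for sufficiently small data, a bona fide solution $\Psi$ with norm linear in $|\varphi_0|_k+|\varphi_1|_k$.

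The principal obstacle is quantifying the derivative loss at each step so as to extract the exponents in the statement. Each linearised solve loses derivatives on two fronts: the change of frame straightening out the disc foliation depends on $\ddbar\Psi$ (losing roughly two derivatives), and the HCMA operator is itself of second order in $\Psi$. Compounded, this yields an effective loss of order four per step, which is why the threshold $k>\requirement$ is imposed and the residual loss $\jop$ is constrained by $(k-\requirement)/4$; the universal ceiling $\jop<1/4$ reflects the rate at which the Nash--Moser iteration converges in the critical H\"older scale. Non-degeneracy of $\pi_V^\ast\omega_0+\sqrt{-1}\ddbar\Psi$ on every fibre is preserved throughout because $\Psi$ stays $C^k$-small by the linear estimates, which in turn guarantees that the disc foliation remains smooth with quantitative bounds at every step. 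Once the iteration converges, restriction to a real slice yields the desired non-degenerate geodesic $\Psi\in C^{k-\jop}([0,1]\times V\torange\ER)$, with the stated norm bound inherited from the fixed-point construction.
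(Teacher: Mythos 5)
Your outline goes wrong at the very first structural choice: you pose the HCMA Dirichlet problem directly on the cylinder $[0,1]\times S^1\times V$ and then claim that Donaldson's holomorphic-disc foliation technique supplies a tame right inverse there.  Donaldson's existence and stability theory in \cite{DonaldsonHolomorphicDiscs} is built on holomorphic \emph{discs} attached to an LS-submanifold of $\partial D\times\WV$, and its Fredholm analysis and free-boundary estimates depend on the base Riemann surface being simply connected.  On the cylinder the leaves must be holomorphic \emph{annuli} with two boundary circles lying on two totally real submanifolds (one at $t=0$, one at $t=1$); the relevant index theory, moduli of conformal types, and free-boundary existence are entirely different and are not supplied by \cite{DonaldsonHolomorphicDiscs} or by the paper.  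Nothing in the proposal addresses how one would construct or control such a family of holomorphic annuli, nor how the tame estimates for the perturbed leafwise Dirichlet problem would be obtained once the foliation is no longer trivial.  Solving the linearised equation at $\Psi\equiv 0$ (your family of Poisson problems on trivial product annuli) is easy; the hard point, which your proposal passes over, is producing the perturbed foliation at an arbitrary iterate in a way compatible with a Nash--Moser scheme.

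The paper's route is designed precisely to avoid this: it works on a long \emph{finite} strip $\trR\subset[0,1]\times\ER$, which is simply connected and hence, by the Riemann mapping theorem (Theorem~1 of \cite{Kerzman}), biholomorphic to $D$ with uniform boundary regularity.  The Dirichlet boundary value is prescribed on all of $\partial\trR\times V$: it equals $\varphi_0,\varphi_1$ on the pieces at $t=0,1$, and on the two short ends of the strip it is a free function $\phi$ supplied by the previous iterate via the translation operator $\Str$.  Solving the disc problem (Proposition~\ref{prop:DonaldsonDiscDirichletProblem}) at each step, and updating $\phi$ by restricting the solution to the middle rectangle $\Rectangle$, produces a map $\mB_{\bvarphi}$ whose fixed point gives a solution that is periodic in $\theta$; combining periodicity with $\theta$-independence on $\{t=0,1\}$ and the uniqueness of the Dirichlet problem (Corollary~7 of \cite{DonaldsonSymmetricSpace}) yields a $\theta$-independent, hence geodesic, solution.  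This iteration-on-the-lateral-boundary device and the periodicity argument are the real content of the proof, and they are absent from your proposal.  Finally, your discussion of the exponent $\jop<\min\{1/4,(k-4)/4\}$ is heuristic: in the paper this bound comes from a rather delicate choice of the Nash--Moser parameters $r,B,b,l,\chi,\alpha$ subject to the constraints in Lemma~\ref{lem:MoserTheorem}, and in particular from sending the auxiliary index $r$ to infinity in a controlled way; ``loss of four derivatives per step'' does not by itself produce these thresholds.
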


\begin{remark}The notation $C^k$, for non-integer $k$, is as same as that used in
\cite{HormanderPhysicalGeodesy}, and it's also explained in Section \ref{NotationandConvention} of
the current paper.
\end{remark}
\begin{remark}
In \cite{RegularityofGeodesics}, J. Hu shows that answer to
Question \ref{Frage:ShortimpliesSmooth} is negative. Indeed, with flat torus
being the background manifold, he constructs a sequence of analytic functions $\psi_k$, for $k=1,2,...$, so that
\[|\psi_k|_B\rightarrow 0, \text{ as }k\rightarrow \infty, \text{ for any fixed }B\in \EZ^+,  \]
while none of $\psi_k$ can be connected with $0$ by $C^\infty$ non-degenerate geodesic. It suggests that  in this sense our result Theorem \ref{thm:RegularityofNearConstantGeodesics} is optimal.

\end{remark}

\begin{remark} In theorem above,
 constant $\varepsilon(V,\omega_0,k,\jop)$ may go to zero and $C(V,\omega_0,k,\jop)$ may go to $\infty$, as $\jop\rightarrow 0$ or $k\rightarrow 4\ (\text{or } \infty) $.
\end{remark}

\begin{remark}
Theorem \ref{thm:RegularityofNearConstantGeodesics} implies Theorem \ref{thm:RegularNormalNeighborhoodExistence}, by simply replacing background metric $\omega_0$ by $\omega_0+\sqrt{-1}\ddbar\psi_0$, for any $\psi_0$.
\end{remark}
Now we explain the ideas behind Theorem \ref{thm:RegularityofNearConstantGeodesics}. This is proved by an iteration, in which on each step we solve Problem \ref{prob:DirichletonD}
with boundary data determined by the previous step of the iteration.
More precisely,
in order to construct a geodesic segment between two K\"{a}hler potentials
in a small neighborhood of a given potential,
we perturb the explicit solution of HCMA equation which corresponds to the
one-point geodesic
segment, in the domain $\trR\times V$, where
$\trR\subset [0,1]\times \ER$ is a long strip of the finite length, as illustrated in Figure \ref{fig:ConstructionofLongStrip}.
We use complex coordinates $\tau=t+\sqrt{-1}\theta$ on $[0,1]\times \ER$.
Endpoints of geodesic are the prescribed K\"{a}hler potentials at
$t=0$ and $t=1$ respectively.
Then
Dirichlet data in Problem \ref{prob:DirichletonD}
are now given
only on the
part  of $\partial\trR$ which lies on $t=0$ and $t=1$, and we seek a
solution which does not depend
on  the imaginary part, $\theta$, of the complex ``time" $\tau$. We construct such
solution
by iteration: at each step we
prescribe the data on the remaining part of the boundary, i.e.
on $\partial \trR\setminus(\{t=0\}\cup\{t=1\})$, and then solve
Problem \ref{prob:DirichletonD} with these Dirichlet data
using a version of the method of \cite{DonaldsonHolomorphicDiscs}
for H\"older spaces $C^{k,\alpha}$, which we develop in Section \ref{DirichletProblem}.
We use  this solution to update the boundary data on
$\partial \trR\setminus(\{t=0\}\cup\{t=1\})$
in such way
that the fixed point of this process does not depend on $\theta$-variable.
Then the fixed point
is the solution of HCMA which corresponds to the geodesic segment.
Existence of a fixed point is obtained
by Nash-Moser type theorem, since
 the estimates of solution  involve a loss of regularity,
due to the degeneracy of the HCMA equation. This comprise the bulk of our Section 3.

In Section \ref{FamilyofProblems}, we prove three lemmas. One
studies a family of Dirichlet problems for Poisson equation, and
 is used in several parts of the paper, to show the regularity of functions on
 the product space.
 Another shows the solvability of the
Riemann-Hilbert Problem in H\"older spaces, and is used in Section \ref{DirichletProblem} in the proof of stable existence of holomorphic disc families.
The third discusses a family of harmonic functions, and is used in Section \ref{Iteration}, to show the invertibility of tangential map of the iteration map.

In Section \ref{MoserTheorem}, we prove a version of Moser-type inverse function theorem.
\subsection{Notation and Convention}\label{NotationandConvention}
\begin{itemize}
\item We denote $[0,1]\times \ER$ by $\sS$, it is considered as a Riemann  surface with complex coordinate $\tau=t+\sqrt{-1}\theta$, where we used $t$ as $[0,1]$ direction variable, and $\theta$ as $\ER$ direction variable.
\item Given closed metric spaces $M, N$, space $C^r(M)$, for $r>0$, should be understood
as $C^{[r],\{r\}}(M)$, and $|\spacecdot|_r=|\spacecdot|_{[r],\{r\}}$.
When there is no ambiguity, we will not indicate domain of definition,
so $|f|_{r;M}$ may be abbreviated as $|f|_r$ sometimes. Also, given $(f_1, f_2)\in C^r(M)\times C^r(N)$, we may denote
\[|f_1, f_2|_r=|f_1|_{r;M}+|f_2|_{r;N}.\]
\item When there is no ambiguity, when doing estimate, we will not indicate the dependence of constants on $V$ and $\omega_0.$
\end{itemize}


\section{Disc Problem}\label{DirichletProblem}
In this section  we study the solvability of Problem \ref{prob:DirichletonD}, i.e. Dirichlet Problem of the HCMA equation on $D\times V$. Our aim is to prove
Proposition \ref{prop:DonaldsonDiscDirichletProblem}.

We first briefly describe the approach in \cite{DonaldsonHolomorphicDiscs}, where solvability of the HCMA equation is related to the existence of a family of holomorphic discs, with boundaries attached to a totally real submanifold defined by the Dirichlet data.

From the argument of \cite{DonaldsonHolomorphicDiscs} and \cite{Semmes}, we know if
$\Phi\in C^{3}(\overline D\times V)$ is a solution to Problem \ref{prob:DirichletonD}, with boundary
value $F$, then kernels  of $\Omega_0+\sqrt{-1}\,\ddbar\,\Phi$ form a foliation on $D\times V$,
which we denote by $\Foliation(F)$ and there is a map $\ADF$ from $\cD\times V$ to $\cD\times V$, satisfying
\begin{equation}\label{EquationofADFAug13}
\left\{
\begin{array}{l}
\pi_D\circ \ADF=\pi_D;\\
\vspace{1ex}
\ADF\big|_{\{\tau=-\sqrt{-1}\}\times V}=Id;\\
\vspace{1ex}
\text{for each }  z\in V,
\text{ restriction of }    \Omega_0+\sqrt{-1}\,\ddbar\,\Phi   \text{  to }  \ADF(D\times \{z\})
 \text{ vanishes};\\
\text{for each }  z\in V,
\text{ restriction of }    \ADF   \text{  to }  D\times \{z\}
 \text{  is holomorphic.}
 \end{array}
\right.
\end{equation}
where $\pi_D$ is the trivial projection from $\cD\times V$ to $\cD$.

To introduce the family of holomorphic discs with boundaries attached to a totally real manifold, a holomorphic fiber bundle $\WV$ was constructed in \cite{DonaldsonHolomorphicDiscs}, in the following way. Suppose $V=\cup \Uk$, where $\Uk$'s are open sets. On $\Uk$, denote the complex coordinates
 by $z_\ok^i$, and suppose $\omega_0$ is locally given by
 $\sqrt{-1}\ddbar \rho_0^\ok$. We glue up $\{T^\ast \Uk\}$ in the following way.
 Let $p\in \Uk \cap \Uvm$, then $(p,\xi) \in T^\ast\Uk$ and $(p,\eta) \in T^\ast\Uvm$ are identical, if and only if
\[(\xi_i-\partial_i\rho_0^\ok)dz_\ok^i=(\eta_i-\partial_i\rho_0^\vm)dz_\vm^i.\]
Then $\cup T^\ast\Uk$ modulo this equivalence relation is denoted by $\WV$ in \cite{DonaldsonHolomorphicDiscs}.

It was shown in \cite{DonaldsonHolomorphicDiscs}, that  in $D\times \WV$,
\[\Lambda_F=\bigcup_{\ok}\bigcup_{\overset{\tau\in\partial D}{ p\in U_\ok}}(\tau, p, [\partial_{z^i_\ok}\rho_0^\ok(p)+\partial_{z^i_\ok} F(\tau,p)])\]
is a totally real submanifold, more precisely an LS-submanifold.
Mapping $\mathcal{G}$ from $D\times V $ to $D\times\WV$, given by
\[(\tau,p)\mapsto\left(\tau,\ \ADF(\tau,p),\  (\partial_{z^i_\ok}\rho^\ok_0(\spacecdot)+\partial_{z^i_\ok}\Phi(\tau,\spacecdot))\circ\ADF(\tau,p)\right),\text{ for }(\tau,p) \in \ADFi(D\times \Uok),\]
is holomorphic with respect to $\tau$ variable, and
\[\mathcal{G}(\partial D\times V)\subset\Lambda_F.\]

The converse is also true, roughly speaking, given a family of holomorphic discs, with boundaries attached to $\Lambda_F$, we can construct solution to HCMA equations on $D\times V$.

Then, using the theory of  elliptic PDEs and free boundary problems, it was shown that existence of holomorphic disc families with boundaries attached to a totally real manifold is stable under the $C^1$ perturbation of boundary manifold.

Through this approach, Donaldson showed in \cite{DonaldsonHolomorphicDiscs}, that the set of smooth functions, $F$, for which a smooth solution to Problem \ref{prob:DirichletonD} exists is open in $C^\infty(\partial D\times V)$ with respect to $C^2$ topology.

Comparing to Theorem 1 of \cite{DonaldsonHolomorphicDiscs},  the following Proposition \ref{prop:DonaldsonDiscDirichletProblem} is weaker and less general, but it contains the estimates we need.

\begin{proposition}\label{prop:DonaldsonDiscDirichletProblem}
Given $(V,\omega_0)$,  with  $\omega_0>0$,
 for any $0<\fractionSpace<1$ and $\ir\geq \fourthird$, $\ir\notin \EZ$,
there exists $\delta_D(\ir\alsoDependonBaseFrac)>0$, such that if $F\in C^\ir(\partial D\times V\torange \ER)$
 satisfies (\ref{nondegenBdryF}) and $|F|_{\fourthird}<\delta_D(\ir\alsoDependonBaseFrac)$

, then there exists $\Phi\in C^{\ir-2}(\cD\times V\torange \ER)$ solving Problem \ref{prob:DirichletonD}
 with boundary value $F$, and
\begin{equation}
|\Phi|_{\ir-2}\leq C(\constr\alsoDependonBaseFrac)|F|_\ir.					\label{CrEstimateofPhiDisc}
\end{equation}
The corresponding map $\ADF$ satisfying (\ref{EquationofADFAug13}) is in $C^{\ir -2}(\cD\times V\torange \cD\times V)$, with
\begin{align}
&|\ADF-Id|_{\ir-2}\leq C(\constr\alsoDependonBaseFrac)|F|_\ir,	\label{CrEstimateofADFDisc}
\\
&|\ADFi-Id|_{\ir-2}\leq C(\constr\alsoDependonBaseFrac)|F|_\ir.	\label{CrEstimateofADFiDisc}
\end{align}
Moreover, for any two boundary functions $F, \hat F$, both satisfying  requirements of this Proposition,
the corresponding
$\Phi, \hat \Phi$ and $\ADF, \ADHF$ satisfy
\begin{align}
&|\Phi-\hat\Phi|_{\ir-2}\leq C(\constr\alsoDependonBaseFrac)(|F-\hat F|_{\ir-2}+(1+|F|_\ir+|\hat F|_\ir)|F-\hat F|_{2}),	\label{CrLipPotentialDisc}
\\
&|\ADF-\ADHF|_{\ir-2}\leq C(\constr\alsoDependonBaseFrac)(|F-\hat F|_{\ir-1}+(1+|F|_\ir+|\hat F|_\ir)|F-\hat F|_{\threethird}).	\label{CrLipADFDisc}
\end{align}
\end{proposition}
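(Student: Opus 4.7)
The plan is to adapt Donaldson's holomorphic disc construction from \cite{DonaldsonHolomorphicDiscs} so as to extract explicit H\"older estimates. I would parametrize candidate solutions to Problem \ref{prob:DirichletonD} not by $\Phi$ directly but by the associated disc family $\mathcal{G}: \cD \times V \to \cD \times \WV$ described above: the family of $\tau$-holomorphic discs fibering over $\cD$, with boundary attached to $\Lambda_F$, and normalized so that $\ADF|_{\tau = -\sqrt{-1}} = \mathrm{Id}$. When $F \equiv 0$ the unique solution is the flat family $\mathcal{G}_0(\tau,p) = (\tau, p, \partial_{z^i_\ok}\rho_0^\ok(p))$, and this will serve as the base point of a perturbation argument.

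I would then set up an implicit function theorem in the H\"older scale. In local Darboux charts near the zero-section of $\WV$, the condition ``$\mathcal{G}(\partial D \times V) \subset \Lambda_F$'' becomes a nonlinear equation whose linearization at $\mathcal{G}_0$ is, disc by disc over $V$, a Riemann-Hilbert problem on $D$ with constant totally-real boundary. Solvability of this model linear problem with the appropriate $C^{\ir-2}$ estimate is precisely the content of the Riemann-Hilbert lemma promised in Section \ref{FamilyofProblems}; compactness of $V$ promotes fiberwise invertibility to a bounded inverse $L^{-1}$ of the full linearized operator on $C^{\ir - 2}$. The smallness condition $|F|_{\fourthird} < \delta_D$ is tuned so that a contraction-mapping version of the implicit function theorem applied in $C^{\fourthird}$ produces a disc family $\mathcal{G}$ with $|\mathcal{G} - \mathcal{G}_0|_{\fourthird} \leq C|F|_{\fourthird}$, and then bootstrapping on the same fixed-point equation, using that $L^{-1}$ remains uniformly bounded on $C^{\ir - 2}$ as long as the reference solution is small in $C^{\fourthird}$, yields the tame estimates (\ref{CrEstimateofADFDisc}) and (\ref{CrEstimateofADFiDisc}).

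The two-derivative loss appearing in (\ref{CrEstimateofPhiDisc}) enters at the reconstruction step: having produced $\ADF$, one recovers $\Phi$ by integrating along $\tau$-fibers an expression built from $\ADF$ and $\partial F$, and the differentiated boundary data cost two derivatives. For the Lipschitz-type estimates (\ref{CrLipPotentialDisc}) and (\ref{CrLipADFDisc}), I would subtract the equations for two solutions $\mathcal{G}$, $\hat{\mathcal{G}}$ corresponding to $F$ and $\hat F$: their difference satisfies a linearized Riemann-Hilbert problem whose inhomogeneity splits into a linear term controlled by $|F - \hat F|_{\ir - 1}$ and a quadratic remainder bounded, via interpolation in the H\"older scale, by $(1 + |F|_\ir + |\hat F|_\ir)|F - \hat F|_{\threethird}$. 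Applying $L^{-1}$ and then passing from $\ADF - \ADHF$ back to $\Phi - \hat \Phi$ via the same reconstruction formula produces the claimed estimates, with the regularity index shifted by one or two according to whether the unknown is the disc-family map or the potential.

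The main obstacle is producing a tame H\"older estimate on the inverse Riemann-Hilbert operator that is simultaneously uniform for $F$ small only in the low norm $C^{\fourthird}$ and linear in the high norm $|F|_\ir$, even though the totally-real boundary manifold $\Lambda_F$ itself varies with $F$ and must be tracked in a $C^{\ir - 2}$ sense. Controlling how the Riemann-Hilbert projector interacts with these $F$-dependent boundary conditions, and turning the fiberwise H\"older estimate of the Section \ref{FamilyofProblems} lemma into a genuinely parametric bound over the compact base $V$, is the technically delicate step that drives the whole argument and dictates the particular form of the estimates stated in the proposition.
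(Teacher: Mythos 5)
Your proposal follows the same broad strategy as the paper (perturb the flat disc family, linearize to a fiberwise Riemann--Hilbert problem, run a tame contraction in H\"older spaces with smallness only in the low norm $C^{\fourthird}$, and reconstruct $\Phi$ from the disc family as in Donaldson--Semmes). That part is right, and your observation that the delicate point is making the Riemann--Hilbert inverse uniformly bounded while the boundary manifold $\Lambda_F$ varies is exactly what the paper's weighted-norm contraction ($\|\cdot\| = |\cdot|_{\ir-2} + A|\cdot|_{\twothird}$ with $A$ depending on $|F|_\ir$) is engineered to handle.

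There is, however, a genuine gap in your treatment of the Lipschitz estimate (\ref{CrLipPotentialDisc}) for the potentials. You say that passing from $\ADF-\ADHF$ back to $\Phi-\hat\Phi$ ``via the same reconstruction formula'' produces the claimed estimate, ``with the regularity index shifted by one or two.'' It does not. The reconstruction of $P$ from the disc family involves composing with $\ADFi$, and the natural estimate one can prove for the difference of inverse maps is $|\ADFi - \ADHFi|_{\ir-3} \lesssim C(|F|_\ir, |\hat F|_\ir)\,|F-\hat F|_\ir$ — one derivative worse than $|\ADFi - \mathrm{Id}|_{\ir-2} \lesssim |F|_\ir$. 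Feeding this into the reconstruction yields only $|\Phi - \hat\Phi|_{\ir-2} \le C(\ir, |F|_{\ir+1}, |\hat F|_{\ir+1})\,|F-\hat F|_{\ir+1}$ (this is the paper's intermediate Lemma \ref{lem:DonaldsonSemmesConstructionofPotentialFunction}), which is a three-derivative loss, not the two-derivative loss stated in (\ref{CrLipPotentialDisc}), and which also fails to have the tame factored form $|F-\hat F|_{\ir-2} + (1+|F|_\ir+|\hat F|_\ir)|F-\hat F|_2$ that the iteration in Section \ref{Iteration} requires. The paper closes this gap with a separate argument (Section \ref{ImprovingRegularity}): take the linear path $F_\lambda = \lambda F_1 + (1-\lambda)F_0$, realize the derivative $H_\lambda = \partial_\lambda \Phi_\lambda$ intrinsically as the leafwise harmonic function on the foliation $\Foliation(F_\lambda)$ with boundary value $F_1-F_0$, estimate $H_\lambda$ directly using the pullback formula $H_\lambda = (\ADFlambda)_*\,\prodh\,(\ADFlambda)^*(F_1-F_0)$ (which avoids differencing the inverse maps), prove $C^{\ir-2}$-continuity of $\lambda\mapsto H_\lambda$, and then integrate $\Phi_1 - \Phi_0 = \int_0^1 H_\lambda\,d\lambda$. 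None of this appears in your proposal, and without it (\ref{CrLipPotentialDisc}) does not follow from the disc-family Lipschitz estimate by reconstruction alone.Your proposal follows the same broad strategy as the paper (perturb the flat disc family, linearize to a fiberwise Riemann--Hilbert problem, run a tame contraction in H\"older spaces with smallness assumed only in $C^{\fourthird}$, and then reconstruct $\Phi$ from the disc family as in Donaldson--Semmes). Your identification of the crux --- a tame H\"older bound on the inverse Riemann--Hilbert operator, uniform as $\Lambda_F$ varies --- is exactly what the paper's weighted-norm contraction $\|\cdot\| = |\cdot|_{\ir-2} + A\,|\cdot|_{\twothird}$ with $A$ depending on $|F|_\ir$ is set up to deliver, so on that side you are aligned with the paper.

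There is, however, a genuine gap in your treatment of (\ref{CrLipPotentialDisc}). You claim that passing from $\ADF-\ADHF$ back to $\Phi-\hat\Phi$ ``via the same reconstruction formula'' gives the stated Lipschitz estimate with ``the regularity index shifted by one or two.'' This is not correct. Reconstructing $P$ (and hence $\Phi$) from the disc family requires composing with $\ADFi$, and the difference of inverse maps only satisfies the \emph{weaker} estimate $|\ADFi - \ADHFi|_{\ir-3} \le C(\ir,|F|_\ir,|\hat F|_\ir)\,|F-\hat F|_\ir$ --- a full derivative worse than the single-map bound $|\ADFi - \mathrm{Id}|_{\ir-2} \le C(\ir)|F|_\ir$. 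Plugging this into the reconstruction gives only
\[
|\Phi - \hat\Phi|_{\ir-2} \le C(\ir, |F|_{\ir+1}, |\hat F|_{\ir+1})\,|F - \hat F|_{\ir+1},
\]
which is the paper's intermediate Lemma \ref{lem:DonaldsonSemmesConstructionofPotentialFunction}: a three-derivative loss, not two, and not in the tame factored form $|F-\hat F|_{\ir-2} + (1+|F|_\ir+|\hat F|_\ir)|F-\hat F|_2$ that the Moser iteration in Section \ref{Iteration} needs. To obtain (\ref{CrLipPotentialDisc}) the paper adds a separate argument in Section \ref{ImprovingRegularity}: interpolate $F_\lambda = \lambda F_1 + (1-\lambda)F_0$, realize the $\lambda$-derivative $H_\lambda$ of $\Phi_\lambda$ as the leafwise harmonic function on $\Foliation(F_\lambda)$ with boundary value $F_1-F_0$, estimate $H_\lambda$ directly using
\[
H_\lambda = \bigl(\prodh\bigl[(F_1-F_0)\circ\ADFlambda\bigr]\bigr)\circ\ADFlambda^{-1},
\]
which never forms a difference of inverse maps, then show $C^{\ir-2}$-continuity in $\lambda$ and a second-order Taylor bound, and finally write $\Phi_1 - \Phi_0 = \int_0^1 H_\lambda\,d\lambda$. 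This integration-of-the-linearization step is essential to recovering the claimed two-derivative loss, and it is missing from your proposal.
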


\begin{remark}Condition that  $|F|_{\fourthird}<\delta_D$ implies
condition  (\ref{nondegenBdryF}), providing $\delta_D$ small enough.
\end{remark}
\begin{remark}
Below, when using Proposition \ref{prop:DonaldsonDiscDirichletProblem}, we
will fix $\fractionSpace=\min\{\frac{k-4}{3}, {1\over 3}\}$, while we need to
make $\ir$ large. Then, for convenience, when there is no ambiguity we will
refer constants  in Proposition \ref{prop:DonaldsonDiscDirichletProblem}
as $\delta_D(\ir)$ and $C(\ir)$, without displaying the dependence on $\fractionSpace$. And $\delta_D(\ir)$ may go to zero and $C(\ir)$ may go to infinity as the fractional  part of $\ir$ goes to zero, so, in section \ref{ApplyingMoserTheorem} when using Proposition \ref{prop:DonaldsonDiscDirichletProblem}, we only allow fractional part of $\ir$ equal to $\frac{1}{3}$.
\end{remark}
 To prove Proposition \ref{prop:DonaldsonDiscDirichletProblem},
\begin{itemize}
\item  in Section \ref{LocalTheory}, we prove Lemma \ref{lem:LocalStableExistenceofHolomorphicDiscFamilies}, regarding the local existence of family of holomorphic discs, which is a local version of Proposition 2 of \cite{DonaldsonHolomorphicDiscs};
\item  in Section \ref{GlobalTheory}, we show these locally constructed disc families agree with each other, when and where  their domains of definition overlap;
\item in  Section \ref{ProcedureofDonaldsonSemmes}, we construct potential function $\Phi$ from family of holomorphic discs, following argument of \cite{DonaldsonHolomorphicDiscs} and \cite{Semmes};
\item in Section \ref{ImprovingRegularity}, we improve estimate regarding comparison of potential functions with an integration method
and complete the proof of Proposition \ref{prop:DonaldsonDiscDirichletProblem}.
\end{itemize}
\subsection{Stability and  Existence of Holomorphic Disc Families}\label{StableExistenceofHolomorphicDiscFamilies}
In this section, we prove the existence of holomorphic disc families. We first prove a local version, Lemma \ref{lem:LocalStableExistenceofHolomorphicDiscFamilies}, in Section \ref{LocalTheory}. Then get global theory Lemma  \ref{lem:GlobalStableExistenceofHolomorphicDiscFamilies}, \ref{lem:InverseofGlobalMonodromy} in Section \ref{GlobalTheory}.
\subsubsection{Local Theory}\label{LocalTheory}
\begin{lemma}[Local Existence of Holomorphic Disc Families]
\label{lem:LocalStableExistenceofHolomorphicDiscFamilies}
We denote $B_1=\overline{B_1(0)}$ in $\EC^n$, $B_{\half}=\overline{B_\half(0)}$ in
$\EC^n$, and let $z^i$, $i=1,..., n$ be the complex coordinates on $\EC^n$.
Given $\rho_0\in C^\infty(B_1\torange \ER)$, with $\sqrt{-1}\ddbar\rho_0>0$,
and any $0<\fractionSpace<1$, $\ \ir\geq \fourthird$, $\ir\notin \EZ$,
there exists $\delta_L=\delta_L(\rho_0,\constr\alsoDependonBaseFrac)>0$
such that, if $F\in C^{\ir}(\partial D\times B_1\torange \ER)$ satisfies
$|F|_{\fourthird}\leq \delta_L$,
there exists
\[(f,h)\in C^{\ir-2}(\cD\times B_\half\torange \EC^n\times \EC^n),\]
solving the boundary value problem
\begin{align}
\overline{\partial_\tau}f=\overline{\partial_\tau}h=0,\ \ \ \ \ \ \ \ \ \ \ \ \ & \text{ in } D\times B_\half;\label{LocalDiscEquationLemma}\\
\partial_i(\rho_0+F)(\tau,z+f(\tau,z))=\partial_i\rho_0(z)+h_i(\tau,z),\ \ \ \ \ \ \ \ \ & \text{ on } \partial D\times B_\half,\text{ for } i=1,...,n;
												\label{LocalDiscBoundaryCondition}\\
f(-\sqrt{-1},z)=0,  \ \ \ \ \ \ \ \ \ \ \ \ &\text{for  all } z\in B_\half,								\label{LocalDiscOnePointBdConditionLemma}
\end{align}
and satisfying
\begin{align}
|f|_{\ir-2}+|h|_{\ir-2}\leq C(\rho_0, \constr\alsoDependonBaseFrac) |F|_\ir.
   					\label{Cr-2BoundMonodromyDiscLocalTheoryNovember30}\\
|f|_{\twothird}+|h|_{\twothird}\leq C(\rho_0\alsoDependonBaseFrac) |F|_\threethird.			
\label{C23BoundMonodromyDiscLocalTheory}
\end{align}

Also, given two boundary perturbations $F$, $\hat F$, the corresponding solutions
$(f,h)$, $( \hat f, \hat h)$ satisfy
\begin{align}
&|f-\hat f|_{\twothird}+|h-\hat h|_{\twothird}\leq C(\rho_0\alsoDependonBaseFrac)|F-\hat F|_\threethird;\label{C23LipMonodromyDiscLocalTheory}
\\
&|f-\hat f|_{\ir-2}+|h-\hat h|_{\ir-2}\leq C(\rho_0,\constr\alsoDependonBaseFrac)(|F-\hat F|_{\ir-1}+(1+|F|_\ir+|\hat F|_\ir)|F-\hat F|_{\threethird}). \label{CrLipMonodromyDiscLocalTheory}
\end{align}

Moreover, if $|F|_{\fourthird}\leq \delta_L(4+X)$, then a solution of
problem (\ref{LocalDiscEquationLemma})--(\ref{LocalDiscOnePointBdConditionLemma})
satisfying (\ref{C23BoundMonodromyDiscLocalTheory}) is unique.
Here and bellow we write $\delta_L(\ir)$ for $\delta_L(\rho_0,\constr\alsoDependonBaseFrac)$
because $\rho_0$ and $X$ are fixed.

\end{lemma}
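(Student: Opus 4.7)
The idea is to treat (\ref{LocalDiscEquationLemma})–(\ref{LocalDiscOnePointBdConditionLemma}) as a perturbation of the trivial solution $(f,h)\equiv(0,0)$, which solves the system when $F=0$. Let $\tau$ range over $\partial D$ and view the $z$-variable as a parameter in $B_{1/2}$. I would recast the system as a fixed point equation $(f,h)=\mathcal T_F(f,h)$, where $\mathcal T_F$ is obtained by inverting the linearization of the boundary operator at $(0,0,0)$ and treating the higher order terms plus $F$-dependence as a source. The linearization is a Riemann–Hilbert problem parametrized by $z$, whose solvability in H\"older spaces is precisely the content of the lemma in Section \ref{FamilyofProblems}. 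Contraction in $C^{\twothird}$ gives existence, uniqueness and the low norm bound; higher regularity and tame Lipschitz bounds come from interpolation/Moser type estimates on composition.

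\textbf{Linearization.} Write the boundary condition (\ref{LocalDiscBoundaryCondition}) as
\[
P(f,h,F)(\tau,z):=\partial_i(\rho_0+F)(\tau,z+f(\tau,z))-\partial_i\rho_0(z)-h_i(\tau,z)=0 \quad \text{on } \partial D\times B_{\half}.
\]
Its differential at $(f,h,F)=(0,0,0)$ in $(\delta f,\delta h)$ is
\[
(\rho_0)_{i j}(z)\,\delta f^{j}+(\rho_0)_{i\bar j}(z)\,\overline{\delta f^{j}}-\delta h_i,
\]
coupled with the normalization $\delta f(-\sqrt{-1},z)=0$ and the holomorphy of $\delta f,\delta h$ in $\tau$. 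Because $\sqrt{-1}\partial\bar\partial\rho_0>0$, this is a parameter family of invertible Riemann–Hilbert problems. The H\"older solvability lemma of Section \ref{FamilyofProblems} provides a bounded inverse $\mathcal L^{-1}$, with a boundary regularity loss of one derivative, tame in the parameter $z$ and therefore giving an inverse between $C^{\ir-1}(\partial D\times B_\half)$ data and $C^{\ir-2}(\bar D\times B_\half)$ solutions.

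\textbf{Fixed point for existence.} Set
\[
\mathcal N_F(f,h)_i(\tau,z):=\partial_i(\rho_0+F)(\tau,z+f)-\partial_i\rho_0(z)-(\rho_0)_{ij}(z)f^j-(\rho_0)_{i\bar j}(z)\bar f^{\bar j}+h_i
\]
on $\partial D\times B_\half$, so the problem becomes $\mathcal L(f,h)=-\mathcal N_F(f,h)+\mathcal L(f,h)$, rearranged as $(f,h)=\mathcal T_F(f,h):=\mathcal L^{-1}\bigl(-\partial F(\tau,z+f)-R(z,f)\bigr)$ where $R$ gathers the quadratic remainder in $f$. For $|F|_{\fourthird}<\delta_L$ the map $\mathcal T_F$ sends a small ball $\{|f|_\twothird+|h|_\twothird\le K|F|_{\threethird}\}$ to itself and is a contraction in $C^{\twothird}$, because $\nabla^2\rho_0$ is bounded and Lipschitz, and composition $g\mapsto \partial F(\cdot,z+g)$ is Lipschitz in $C^\twothird$ with constant controlled by $|F|_{\fourthird}$. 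This yields existence, uniqueness in the small ball, and (\ref{C23BoundMonodromyDiscLocalTheory}).

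\textbf{Higher regularity and tame Lipschitz.} To promote to $C^{\ir-2}$, I would bootstrap: given $(f,h)\in C^{\twothird}$ with norm controlled by $|F|_{\threethird}$, applying $\mathcal L^{-1}$ to the right hand side of $\mathcal T_F$ and using the tame composition estimate
\[
|\partial F(\cdot,z+f)|_{\ir-1}\le C\bigl(|F|_{\ir}+|F|_{\fourthird}|f|_{\ir-2}\bigr),
\]
produces the linear in highest norm inequality yielding (\ref{Cr-2BoundMonodromyDiscLocalTheoryNovember30}). For the difference $(f-\hat f,h-\hat h)$, subtract the two fixed point equations; the resulting source is of the form $\partial F(\tau,z+f)-\partial\hat F(\tau,z+\hat f)+\text{(quadratic remainder)}$, which by first order Taylor expansion and interpolation splits as
\[
[\partial F-\partial\hat F](\tau,z+f)+\bigl(\partial\hat F(\tau,z+f)-\partial\hat F(\tau,z+\hat f)\bigr)+\cdots ,
\]
to which the Moser type inequality $|uv|_{\ir-2}\le C(|u|_{\ir-2}|v|_{2}+|u|_{2}|v|_{\ir-2})$ applies. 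This gives (\ref{C23LipMonodromyDiscLocalTheory}) at low regularity and, after one more bootstrap using $\mathcal L^{-1}$, the tame bound (\ref{CrLipMonodromyDiscLocalTheory}).

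\textbf{Main obstacle.} The delicate point is not the existence (which is a standard contraction) but the tame Lipschitz estimate (\ref{CrLipMonodromyDiscLocalTheory}), where the product $(1+|F|_\ir+|\hat F|_\ir)|F-\hat F|_{\threethird}$ appears. Getting exactly this shape, rather than $|F|_\ir|F-\hat F|_{\ir-2}$, requires interpolating the composition remainder carefully and using the fact that $\mathcal L^{-1}$ loses only one derivative at the boundary. This structure is what allows the eventual Nash–Moser scheme in Section \ref{Iteration} to close.
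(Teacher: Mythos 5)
Your overall strategy is the same as the paper's: linearize the boundary condition at $(f,h)=(0,0)$ to obtain a $z$-parametrized constant-coefficient Riemann--Hilbert problem (solved by Lemma~\ref{lem:FamilyofConstantCoefficientRHProblem}), then treat the nonlinear remainder plus the $F$-perturbation as a source in a contraction-mapping argument. The paper sets this up as an additive Newton-type iteration $\II(f,h)=(f,h)+(\lf,\lh)$ on a set $\NHl$ constrained in both $|\cdot|_\twothird$ and $|\cdot|_{\ir-2}$; your $\mathcal T_F = \mathcal L^{-1}(\cdot)$ formulation is algebraically the same fixed point. Two small things before the substantive one: in your definition of $\mathcal N_F$, the term $+h_i$ should not be there (the $h$-dependence is linear, so it belongs entirely to $\mathcal L$; $\mathcal N_F$ should be the remainder $\partial_i(\rho_0+F)(\tau,z+f)-\partial_i\rho_0(z)-(\rho_0)_{ij}f^j-(\rho_0)_{i\bar j}\overline{f^j}$), and the constant-coefficient RH solver does not lose a derivative---the loss from $\ir$ to $\ir-2$ is budgeted as one for $F\mapsto\partial_i F$ and one for the composition with $f\in C^{\ir-2}$.

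The genuine gap is in your treatment of high regularity. You propose to first get a $C^\twothird$ fixed point and then bootstrap using
\[
|\partial F(\cdot,z+f)|_{\ir-1}\leq C\bigl(|F|_\ir+|F|_\fourthird|f|_{\ir-2}\bigr),
\]
absorbing the second term for small $|F|_\fourthird$. But this a priori estimate is only meaningful if $|f|_{\ir-2}$ is already known to be finite, and a $C^\twothird$-contraction alone gives no control of $|f|_{\ir-2}$---indeed if $f$ is merely $C^\twothird$, the composite $\partial F(\tau,z+f(\tau,z))$ is only $C^\twothird$ as a function of $z$ regardless of how smooth $F$ is, so you cannot up-regularize the source straightforwardly. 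The paper avoids this circularity by building the bound $|f|_{\ir-2}\leq H$ directly into the iteration set $\NHl$, proving $\II(\NHl)\subset\NHl$, and contracting in the weighted norm $\|\cdot\|=|\cdot|_{\ir-2}+A|\cdot|_\twothird$, so the iterates are uniformly bounded in $C^{\ir-2}$ and the limit is automatically there. If you want to keep your sequential-bootstrap presentation, you would need either to re-run the contraction at each intermediate H\"older level $2+k\fractionSpace$ (tracking that the contraction constant stays below one), or to invoke the weighted norm as the paper does; as written, the claim that a $C^\twothird$ fixed point ``applies $\mathcal L^{-1}$ to the right-hand side'' and lands in $C^{\ir-2}$ does not close. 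The Lipschitz estimates (\ref{C23LipMonodromyDiscLocalTheory}), (\ref{CrLipMonodromyDiscLocalTheory}) and the uniqueness argument are fine and match the paper's Steps 3--4 once the above is repaired.
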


\begin{remark}\label{lowerIndexesRmk}
For a given $X$, we can assume without loss of generality that $\delta_L(\gamma)\le\delta_L(4+X)$
for all $\gamma\ge 4+X$. Then uniqueness above
 implies solutions obtained by the application of
Lemma  \ref{lem:LocalStableExistenceofHolomorphicDiscFamilies} does not depend on $\gamma$, so
for the given $F, \hat F\in C^\gamma$,
application of Lemma \ref{lem:LocalStableExistenceofHolomorphicDiscFamilies} with any
$\gamma_1\in [4+X, \gamma]$ will give estimates (\ref{Cr-2BoundMonodromyDiscLocalTheoryNovember30}),
(\ref{CrLipMonodromyDiscLocalTheory})
with $\gamma_1$ instead of $\gamma$.
\end{remark}

\begin{remark}
The following proof is based on a modification of the standard implicit function theorem.
 Application of the standard implicit function theorem would give the existence of the
solution $(f, h)$ in $C^{\ir-2}$ under the condition that
 $|F|_\ir$ is small. But here we assumed only that $|F|_{4+\fractionSpace}$ is small.
\end{remark}

{\flushleft {\em Proof of Lemma \ref {lem:LocalStableExistenceofHolomorphicDiscFamilies}:}}

For $F\in C^{\ir}(\partial D\times B_1\torange \ER)$, define
\begin{equation}\label{setNHlDef}
\begin{split}\NHl=
\Big\{
(f,h)\in C^{\ir-2}(\cD\times B_\half\torange \EC^n\times \EC^n)&\;
\Big|\; \overline\partial_\tau f=\overline\partial_\tau h=0,\\
&f(-\sqrt{-1},\cdot)=0, \;|f|_{\twothird}\leq l<{1\over 4}, \;|f|_{\ir-2}\leq H
\Big\},
\end{split}
\end{equation}
with $l, H$ to be determined,  $l$ depending only on $\rho_0, \gamma, \fractionSpace$,
while
$H$  depending on $\rho_o$, $\gamma$ and $|F|_\ir$, and we can assume $H>1$.
We will perform an iteration on $\NHl$.

Given $(f, h)\in \NHl$, we can find $(\lf,\lh)\in C^{\ir-2}(\cD\times B_\half\torange \EC^n\times \EC^n)$, satisfying
\begin{align}
\partial_{\taubar}&\lf=\partial_{\taubar}\lh=0, \ \ \ \ \ \ \ \ \ \ \ \ \ \ \ \ \ \ \ \ \ \ \ \
&\text{ in } D\times B_\half;
\label{DiscMonodromyInverseEquationLocalTheory-holom}\\
\partial_{ij}\rho_0(z)\lf^j(\tau,z)&+\partial_{i\overline j}\rho_0(z)\overline{\lf^j}(\tau,z)-
\lh_i(\tau,z)\nonumber\\
&=-(\partial_i(\rho_0+F))(\tau, f(\tau, z)+z)+\partial_i\rho_0(z)+h_i(\tau,z),
&\text{  on }\partial D\times B_\half;\label{DiscMonodromyInverseBoundaryEquationLocalTheory}\\
\lf(-&\sqrt{-1},\cdot)=0.
\label{DiscMonodromyInverseEquationLocalTheory-ptcond}
\end{align}
The existence of $\lf, \lh $ follows from Lemma \ref{lem:FamilyofConstantCoefficientRHProblem}.
Then we define a map $\II$ from $\NHl$ to $C^{\ir-2}(\partial D\times B_\half\torange \EC^n\times \EC^n)$ by
\[\II(f,h)=(f,h)+(\lf,\lh).\]

Our argument will consist of four steps:
\begin{description}
\item[Step 1.] We  will show that, when $l$ and 
$\delta_L$
are small enough, and  $H$ is large enough,
\[\II(\NHl)\subset \NHl.\]
\item[Step 2.] We show that if $l$ and 
$\delta_L$ are small enough, then $\II$ is a contraction map, with respect to some
    weighted norm.
\item[Step 3.] We show that, by  making  $l$ and $\delta_L$
even smaller, our solution will satisfy Lipschitz type estimate, i.e.
(\ref{C23LipMonodromyDiscLocalTheory}) and (\ref{CrLipMonodromyDiscLocalTheory}).
\item[Step 4.] We obtain the uniqueness assertion of the Lemma by  further decreasing $\delta_L$.
\end{description}

In each step, we will  precisely estimate $|\II(f,h)|_\twothird$ and
$|\II(f,h)|_{\ir-2}$ with (\ref{DiscMonodromyInverseBoundaryEquationLocalTheory}).
And, in the following, we will use index $\alpha,\beta,\varkappaforgamma$, each
running through $1,...,n, \overline 1,..., \overline n$, and we will also use
notation $f^{\overline i}=\overline{f^i}$,
 $h_{\overline i}=\overline{h_i}$.

{\flushleft{\bf Step 1.}}
On $\partial D\times B_\half$, $\II(f,h)$ satisfies the following boundary condition
\begin{align}
  &\partial_{ij}\rho_0(z)(\lf^j+f^j)(\tau,z)+
  \partial_{i\overline j}\rho_0(z)\overline{(\lf^j+f^j)}(\tau,z)-(\lh_i(\tau,z)+
  h_i(\tau,z))\nonumber\\
=&-\int_0^1\left[\partial_{i\alpha}\rho_0(z+uf(\tau,z))-
\partial_{i\alpha}\rho_0(z)\right]\,du\, f^\alpha(\tau,z)
-(\partial_i F)(\tau, f(\tau, z)+z)\nonumber \\
=&-\int_{0}^1\int_0^1\partial_{i\alpha\beta}\rho_0(z+
uvf(\tau,z))u\,dv\,du\, f^{\alpha}(\tau,z)f^\beta(\tau,z)-(\partial_i F)(\tau, f(\tau, z)+z).
											\label{BoundaryConditionofIIfhwithIntegration}
\end{align}
By Theorem A.7, A.8 of \cite{HormanderPhysicalGeodesy}, right hand side of (\ref{BoundaryConditionofIIfhwithIntegration}) can be estimated as
\begin{align*}
|\text{RHS of } (\ref{BoundaryConditionofIIfhwithIntegration})|_{\twothird}
\leq C(\rho_0)&(
|\rho_0|_{\fivethird}(1+|f|_1)^{\twothird}|f|_0^2+
|\rho_0|_4(1+|f|_\twothird)|f|_0^2				
	\\
&+|\rho_0|_3|f|_{\twothird}|f|_0
+|F|_\threethird(1+|f|_1)^{\twothird}+|F|_2(1+|f|_\twothird))	
	\\
\leq C(\rho_0)&(l^2+ |F|_\threethird),
\end{align*}
\begin{align*}
|\text{RHS of } (\ref{BoundaryConditionofIIfhwithIntegration})|_{\irtwothird} \leq C(\rho_0,\constr)&(
|\rho_0|_{\irfivethird}(1+|f|_1)^{\irtwothird}|f|_0^2+|\rho_0|_4(1+|f|_\irtwothird)|f|_0^2		 \nonumber		
	\\
&+|\rho_0|_3|f|_{\irtwothird}|f|_0
+|F|_\irthreethird(1+|f|_1)^{\irtwothird}+|F|_2(1+|f|_\irtwothird))					 \nonumber
	\\
\leq C(\rho_0,\constr)&(l^2+ |F|_\irthreethird+(1+H)(l+|F|_2)),
\end{align*}
where we used the bounds of $(f,h)$ which follow from the inclusion $(f,h)\in \NHl$.
Then by Lemma \ref{lem:FamilyofConstantCoefficientRHProblem}, we have
\begin{equation*}
|\II(f,h)|_\twothird\leq C_1(\rho_0\alsoDependonBaseFrac)(l^2+|F|_{\threethird});
\end{equation*}
\begin{equation*}
|\II(f,h)|_\irtwothird\leq C_2(\rho_0,\constr)(|F|_{\irthreethird}+2H(l+|F|_2)).
\end{equation*}
Constants $C_1, C_2, ... , C_{10} $ here and in the following are all assumed
to be greater than $1.$ If we choose  $\delta_L$, $l$, $H$ satisfying
\begin{align}l<{1\over 4C_1(\rho_0\alsoDependonBaseFrac)},\ \ \
\delta_L<{l\over 4C_1(\rho_0\alsoDependonBaseFrac)},\ \ \
\delta_L+l\leq {1\over 8C_2(\rho_0,\constr)},
\label{Step1_l_ConditionCollection_Dec10}
\end{align}
\begin{align}
H> 4C_2(\rho_0,\constr)(|F|_{\ir-1}+1),
\label{Step1F_HighOrderNorm_and_H_Dec10}
\end{align}
we have
\[|\II(f,h)|_\twothird\leq l, \text{ and } |\II(f,h)|_\irtwothird\leq H,\]
i.e. $\II$ maps $\NHl$ into $\NHl$.
{\flushleft{\bf Step 2.}} Now, we assume $l,\ H,\ F$ satisfy conditions in
(\ref{Step1_l_ConditionCollection_Dec10}), (\ref{Step1F_HighOrderNorm_and_H_Dec10}), and
 show, when $l$ and 
 $\delta_L$
 are made even smaller, $\II$ is a
 contraction map with respect to some weighted norm. The norm is, for some $A$ large enough,
\begin{equation}\|\spacecdot\|=|\spacecdot|_{\ir-2}+A|\spacecdot|_\twothird.\label{WeightedNormDefinitionLocalTheoryofDiscFamily}
\end{equation}

 Then given $(f,h)$, $(\hat f,\hat h)\in \NHl$, we need to estimate $|\II(f,h)-\II(\hat f,\hat h)|_{\twothird}$ and $|\II(f,h)-\II(\hat f,\hat h)|_{\irtwothird}$.

Plug $\hat f,\hat h$ into (\ref{BoundaryConditionofIIfhwithIntegration}) then subtract  original (\ref{BoundaryConditionofIIfhwithIntegration}) we get, on  $\partial D\times B_\half$,
\begin{align*}
  &\partial_{ij}\rho_0(z)(\lf^j+f^j-\hlf^j-\hat f^j)
  +\partial_{i\overline j}\rho_0(z)\overline{(\lf^j+f^j-\hlf^j-\hat f^j)}-(\lh_i+h_i-\hlh_i-\hat h_i)\nonumber\\
&\quad
=\int_{0}^1\int_0^1\int_0^1\partial_{i\alpha\beta\varkappaforgamma}\rho_0(z+wuv\hat f
+(1-w)uvf)u^2v\,dv\,du\,dw
  	 f^{\alpha}f^\beta(\hat f-f)^\varkappaforgamma\nonumber\\
  &\quad\quad-\int_{0}^1\int_0^1\partial_{i\alpha\beta}\rho_0(z+uvf(\tau,z))u\,dv\,du \,(\hat f-f)^\alpha (\hat f+f)^\beta\nonumber\\
  &\quad\quad+\int_0^1\partial_{i\alpha}F(\tau, z+u\hat f+(1-u)f)du\,(\hat f-f)^\alpha.
\end{align*}
Again, with Theorem A.7, A.8 of \cite{HormanderPhysicalGeodesy} and Lemma \ref{lem:FamilyofConstantCoefficientRHProblem}, we have
\begin{align*}
&|\II(f,h)-\II(\hat f,\hat h)|_\twothird\leq C_3(\rho_0\alsoDependonBaseFrac)
(l+|F|_\fourthird)|\hat f-f|_\twothird,
\\
&
|\II(f,h)-\II(\hat f,\hat h)|_\irtwothird\leq C_4(\rho_0,\constr)
\left((l+|F|_2)|\hat f-f|_{\irtwothird}+(H+|F|_\irfourthird)|\hat f-f|_\twothird\right).
\end{align*}
Then, choosing 
$\delta_L$ and $l$
 small so that
\begin{align}
&
\delta_L\leq{1\over 4C_3 (\rho_0\alsoDependonBaseFrac)+4C_4(\rho_0,\constr )},
 \label{F_LowOrderNormUpperBoundDec10}
 \\
&l\leq {1\over 4C_3(\rho_0\alsoDependonBaseFrac)+4 C_4(\rho_0,\constr)},	\label{UpperBoundof_l_Dec10}
\end{align}
and using $|F|_{\fourthird}\leq
\delta_L$, we have
\begin{align}
&|\II(f,h)-\II(\hat f,\hat h)|_{\twothird}\leq {1\over 2}|(f,h)-(\hat f,\hat h)|_{\twothird},
\label{Contractionwrt23NormLocalHolomorphicDiscFamily}
\\
&|\II(f,h)-\II(\hat f,\hat h)|_{\irtwothird}\leq {1\over 2}|(f,h)-(\hat f,\hat h)|_{\irtwothird}
+C_4(\rho_0,\constr)(H+|F|_\ir)|(f,h)-(\hat f,\hat h)|_\twothird.
\label{ContractionwrtrNormLocalHolomorphicDiscFamily}
\end{align}

Plugging (\ref{ContractionwrtrNormLocalHolomorphicDiscFamily}) and (\ref{Contractionwrt23NormLocalHolomorphicDiscFamily}) into
(\ref{WeightedNormDefinitionLocalTheoryofDiscFamily}), gives
\begin{align}
	&\|\II(f,h)-\II(\hat f,\hat h)\|\nonumber\\
&
\qquad\qquad
\leq	
{1\over 2}|(f,h)-(\hat f,\hat h)|_{\ir-2}+\left({1\over 2}A+C_4(\rho_0,\constr)
(H+|F|_\ir)\right)|(f,h)-(\hat f,\hat h)|_{\twothird}.
\label{DiscContractionTrytoDeterA}
\end{align}
We found that if we choose
\begin{equation}\label{defAforContract}
A=6C_4(\rho_0,\constr)(H+|F|_\ir),
\end{equation}
then
\[
\text{RHS of }(\ref{DiscContractionTrytoDeterA})\leq\frac{2}{3}|(f,h)-(\hat f,\hat h)|_{\ir-2}+\frac{2}{3}A|(f,h)-(\hat f,\hat h)|_{\twothird}
= \frac{2}{3}
\|(f,h)-(\hat f,\hat h)\|.\]
This makes $\II$ a contraction map w.r.t the weighted norm, more precisely
 \begin{align}\label{contractImplM}
\|\II(f,h)-\II(\hat f,\hat h)\|\leq {2\over 3} \|(f,h)-(\hat f,\hat h)\|.
\end{align}

We note that the above estimates are justified if
$l$, $\delta_L$ and $H$ satisfy (\ref{Step1_l_ConditionCollection_Dec10})
(\ref{Step1F_HighOrderNorm_and_H_Dec10})
(\ref{F_LowOrderNormUpperBoundDec10})
(\ref{UpperBoundof_l_Dec10}), which can be achieved by choosing
\begin{align*}
&\delta_L=\frac{1}{32^2(1+C_1(\rho_0\alsoDependonBaseFrac)+C_2(\rho_0,\constr)
+C_3(\rho_0\alsoDependonBaseFrac)+C_4(\rho_0,\constr))^2},\\
& 	l		  = \frac{1}{32(1+C_1(\rho_0\alsoDependonBaseFrac)+C_2(\rho_0,\constr)
+C_3(\rho_0\alsoDependonBaseFrac)+C_4(\rho_0,\constr))},\\
&H=16C_2(\rho,\constr)(|F|_{\ir-1}+1) .
\end{align*}
From (\ref{Contractionwrt23NormLocalHolomorphicDiscFamily}) and (\ref{contractImplM}),
 $\II$ is contraction map in $\|\spacecdot\|$ and $|\spacecdot|_\twothird$ norms.
Then the sequence $\{\II^i(0,0)\}_{i=1}^{\infty}$ converges with
respect to $|\spacecdot|_{\ir-2}$ and $|\spacecdot|_\twothird$ norms, and it is easy to see
the limit is a solution to 	(\ref{LocalDiscEquationLemma}), (\ref{LocalDiscBoundaryCondition}),
(\ref{LocalDiscOnePointBdConditionLemma}). For convenience, we still denote solution by $(f,h)$.
Using (\ref{Contractionwrt23NormLocalHolomorphicDiscFamily})(\ref{contractImplM}), we  estimate solution $(f,h)$ as:
\begin{equation}|(f,h)|_{\twothird}\leq 2|\II(0,0)-(0,0)|_{\twothird}\leq C_5(\rho_0\alsoDependonBaseFrac)|F|_\threethird,  \label{DiscC23NormLemmaLocal}
\end{equation}
\begin{align}|(f,h)|_{\irtwothird}\leq &3\|\II(0,0)-(0,0)\|
\leq C(\rho_0,\ir)(|F|_\irthreethird+(H+|F|_\ir)|F|_\threethird)\leq
C(\rho_0,\constr\alsoDependonBaseFrac)|F|_\ir,  \label{preDiscC23NormLemmaLocal}
\end{align}
where the second inequality follows from Lemma \ref{lem:FamilyofConstantCoefficientRHProblem}
applied to problem (\ref{DiscMonodromyInverseEquationLocalTheory-holom})--(\ref{DiscMonodromyInverseEquationLocalTheory-ptcond}),
and we used (\ref{defAforContract}) in the definition of the norm $\|\cdot\|$.
Estimate (\ref{DiscC23NormLemmaLocal})(\ref{preDiscC23NormLemmaLocal}) confirms (\ref{C23BoundMonodromyDiscLocalTheory}) (\ref{Cr-2BoundMonodromyDiscLocalTheoryNovember30}).

\begin{remark}
Actually without using weighted norm, by more carefully manipulating $\ir-2$ and $\fractionSpace+2$ norms, we can also show for any ${\boldsymbol{p}}\in \NHl$,
the sequence $\{\II^k({\boldsymbol{p}})\}$ is a Cauchy sequence with respect to
$|\spacecdot|_{\ir-2}$.  The use of weighted norm is only for conciseness of presentation.
\end{remark}

{\flushleft{\bf Step 3.}}
Now, given $F, \hat F\in C^\ir(\partial D\times B_1)$, with
$|F|_{\fourthird}, |\hat F|_{\fourthird}<\delta_L$, we can
find $(f,h), (\hat f,\hat h)\in
C^{\ir-2}(\cD\times B_\half\torange \EC^n\times \EC^n)$, which solve
(\ref{LocalDiscEquationLemma}),
(\ref{LocalDiscBoundaryCondition}), (\ref{LocalDiscOnePointBdConditionLemma}),
corresponding to $ F$ and $\hat F$ respectively. Then
 $(f-\hat f,h-\hat h)$ satisfies
 on
$\partial D\times B_\half$:
\begin{align}
	\partial_{i\overline j}
&\rho_0(z)(\hat f-f)^{\overline j}+\partial_{i j}\rho_0(z)(\hat f-f)^j-(\hat h-h)_i\nonumber\\
&  = -\int_{0}^1\int_0^1\partial_{i\alpha\beta}\rho_0(z+vu\hat f+v(1-u)f)
(u \hat f+(1-u)f)^\beta dv\,du\, (\hat f-f)^\alpha\nonumber\\
  & \quad\;-[\partial_i (\hat F-F)](\tau,z+\hat f(\tau,z))-
  \int_0^1[\partial_{i\alpha} F](\tau,z+u\hat f+(1-u)f)\,du\, (\hat f-f)^\alpha.
\label{LipConditionIntegralFormDiscLemma}
\end{align}
Then, using again Theorems A.7,  A.8 of \cite{HormanderPhysicalGeodesy} and Lemma \ref{lem:FamilyofConstantCoefficientRHProblem}, we have
\[
|f-\hat f|_\twothird+|h-\hat h|_\twothird
\leq C_6(\rho_0\alsoDependonBaseFrac)(|f|_\twothird+|\hat f|_\twothird+|F|_\fourthird)|f-\hat f|_\twothird+C(\rho_0\alsoDependonBaseFrac)|F-\hat F|_\threethird.\]
Also, by (\ref{DiscC23NormLemmaLocal}), we have
\[|f|_\twothird+|\hat f|_\twothird\leq C_7(\rho_0\alsoDependonBaseFrac)
(|F|_{\fourthird}+|\hat F|_{\fourthird}),\]
so, reducing $\delta_L$ so that
\[|F|_{\fourthird}+|\hat F|_{\fourthird}\leq 2\delta_L
\leq \frac{1}{2(1+C_6(\rho_0\alsoDependonBaseFrac))(1+C_7(\rho_0\alsoDependonBaseFrac))},\]
we have
\begin{equation}
|f-\hat f|_\twothird+|h-\hat h|_\twothird\leq C_8(\rho_0\alsoDependonBaseFrac)|F-\hat F|_{\threethird},
\end{equation}
which shows (\ref{C23LipMonodromyDiscLocalTheory}).
Now, applying  Theorem A.8 of \cite{HormanderPhysicalGeodesy} and Lemma \ref{lem:FamilyofConstantCoefficientRHProblem} to (\ref{LipConditionIntegralFormDiscLemma}) again, to estimate the $C_{\ir-2}$ norm, we get
\begin{align}|f-\hat f|_{\ir-2}+|h-\hat h|_{\ir-2}\leq &C_9(\rho_0,\constr\alsoDependonBaseFrac)(|F|_2+|f|_\twothird+|\hat f|_\twothird)|f-\hat f|_{\ir-2}\nonumber\\
&+C(\rho_0,\constr\alsoDependonBaseFrac)(|F-\hat F|_{\ir-1}+(1+|F|_\ir+|\hat F|_\ir)|F-\hat F|_{\threethird}).
\end{align}
Then, further reducing $\delta_L$ so that
\[|F|_{\fourthird}+|\hat F|_\fourthird \leq 2\delta_L\leq
\frac{1}{8(1+C_9(\rho_0,\constr\alsoDependonBaseFrac))
(1+C_7(\rho_0\alsoDependonBaseFrac))},\]
we have
\begin{align*}
|f-\hat f|_{\ir-2}+|h-\hat h|_{\ir-2}\leq &C_{10}(\rho_0,\constr\alsoDependonBaseFrac)\left(|F-\hat F|_{\ir-1}+(1+|F|_\ir+|\hat F|_\ir)|F-\hat F|_{\threethird}\right).
\end{align*}
This confirms (\ref{CrLipMonodromyDiscLocalTheory}).

{\flushleft{\bf Step 4.}} Now, after further reducing $\delta_L(4+X)$,
 we prove uniqueness asserted in this Lemma. That is, we show that
  if
$|F|_{\fourthird}\leq \delta_L(4+X)$, then
solution of
problem (\ref{LocalDiscEquationLemma})--(\ref{LocalDiscOnePointBdConditionLemma})
satisfying (\ref{C23BoundMonodromyDiscLocalTheory}) is unique.

In the argument bellow, we fix $\ir=4+X$,
and we use the constant $l$ determined in Steps 1--3 for $\ir=4+X$.
Note that this $l$ depends only on $\rho_0$ and $X$.
 Noting that $\ir-2=2+X$ and $H>1$, we obtain that the iteration set
(\ref{setNHlDef}) for $\ir=4+X$ becomes:
\begin{equation}\label{setNHlDef-4pX}
\begin{split}\NHl=
\Big\{
(f,h)\in C^{\ir-2}(\cD\times B_\half\torange \EC^n\times \EC^n)\;
\Big|\; &\overline\partial_\tau f=\overline\partial_\tau h=0,\\
&f(-\sqrt{-1},\cdot)=0, \;|f|_{\twothird}\leq l<{1\over 4}
\Big\}.
\end{split}
\end{equation}

If $\delta_L(4+X)$ is small as determined in Steps 1--3 for  $\gamma=4+X$,
and  $|F|_{\fourthird}\leq \delta_L(4+X)$, then
$\II(\NHl)\subset \NHl$. This, combined with
(\ref{DiscMonodromyInverseEquationLocalTheory-holom})--(\ref{DiscMonodromyInverseEquationLocalTheory-ptcond})
implies that for such $F$, every solution $(f,g)\in \NHl$ of
problem (\ref{LocalDiscEquationLemma})--(\ref{LocalDiscOnePointBdConditionLemma})
is a fixed point of the iteration map $\II(\cdot)$. Since the map $\II(\cdot)$ is a
contraction as we showed in Step 2, we obtain the uniqueness of a solution
$(f,g)\in \NHl$ of
problem (\ref{LocalDiscEquationLemma})--(\ref{LocalDiscOnePointBdConditionLemma}).

Now we reduce $\delta_L(4+X)$ so that $\delta_L(4+X) \le l/C(\rho_0, X)$,
where $C(\rho_0, X)$ is from
(\ref{C23BoundMonodromyDiscLocalTheory})
and $l=l(\rho_0, X)$ is fixed above. Now, using (\ref{C23BoundMonodromyDiscLocalTheory}) (\ref{setNHlDef-4pX}),
we obtain that if $|F|_{\fourthird}\leq \delta_L(4+X)$,
then any solution $(f,g)$ satisfying
(\ref{C23BoundMonodromyDiscLocalTheory}) is in the set $\NHl$. This proves
the uniqueness asserted in Lemma.

Lemma \ref{lem:LocalStableExistenceofHolomorphicDiscFamilies} is now proved.

\subsubsection{Global Theory}\label{GlobalTheory}
In the argument bellow we use  the holomorphic fibre bundle  $\WV$ and the
submanifold $\Lambda_F\subset D\times \WV$ defined earlier, after (\ref{EquationofADFAug13}).

 We now prove the following global version of Lemma \ref{lem:LocalStableExistenceofHolomorphicDiscFamilies}.

\begin{lemma}[Global Existence and Stability of  Families of Holomorphic Disc]
\label{lem:GlobalStableExistenceofHolomorphicDiscFamilies}
Given K\"ahler manifold $(V,\omega_0)$, $\omega_0>0$ and any $0<\fractionSpace<1$,
$\ir\geq \fourthird$, $\ir\notin \EZ$, there exists
$\delta_G=\delta_G(V, \omega_0,\constr\alsoDependonBaseFrac)>0$
such for each $F\in C^{\ir}(\partial D\times V\torange \ER)$ with $|F|_{\fourthird}\leq \delta_G$,
there exists
\[\GF\in C^{\ir-2}(\cD\times V\torange \cD\times \WV),\]
satisfying
\begin{align}
\partial_{\overline\tau} \GF=0,\text{ in }D\times V;
\label{GlobDiscEquationLemma}\\
\GF(\partial D\times V)\subset \Lambda_F;
\label{GlobDiscBoundaryCondition}\\
\Pi_{\cD\times V}\left(\GF\big|_{\{\tau=-\sqrt{-1}\}\times V}\right)=Id;
\label{GlobDiscOnePointBdConditionLemma}\\
\Pi_D\circ\GF=\pi_D,
\label{GlobDiscProjConditionLemma}
\end{align}
 where $\tau$ denotes variable in $D$,  $\Pi_{\cD\times V}$
 is the projection from $\cD\times \WV$ to $\cD\times V$ determined by the fiber bundle projection
$\mP_V:\WV\to V$, and $\Pi_{D}: \cD\times \WV\to \cD$ and $\pi_D: \cD\times V\to \cD$ are the trivial
projections.

Moreover, let $\ADF := \Pi_{\cD\times V}\circ \GF: \,\cD\times V \to \cD\times V$. It is
 shown in \cite{DonaldsonHolomorphicDiscs} that the map $\ADF$ satisfies
 (\ref{EquationofADFAug13}).
 We have the following estimate for $\ADF$:
\begin{align}
&|\ADF-Id|_{\ir-2;\cD\times V}\leq C(\fractionSpace,\constr\alsoDependonBaseFracornot) |F|_{\ir},				
\label{Cr-2BoundMonodromyDiscGlobalTheory}\\
&|\ADF-Id|_{\twothird;\cD\times V}\leq C(\fractionSpace)\onlyDependonBaseFracornot |F|_{3+X}.			
\label{C23BoundMonodromyDiscGlobalTheory}
\end{align}

Also, for two boundary perturbations $F,\hat F$,  the corresponding maps $\GF, \dsG_{\hat F}$ satisfy
\begin{align}
&|\GF-\dsG_{\hat F}|_{\twothird;\cD\times V}\leq C\onlyDependonBaseFracornot|F-\hat F|_\threethird;\label{C23LipMonodromyDiscGlobalTheory}
\\
&|\GF-\dsG_{\hat F}|_{\ir-2;\cD\times V}\leq
C(\constr\alsoDependonBaseFracornot)\left(|F-\hat F|_{\ir-1}+
(1+|F|_\ir+|\hat F|_\ir)|F-\hat F|_{\threethird}\right).
\label{CrLipMonodromyDiscGlobalTheory}
\end{align}

Moreover, if $|F|_{\fourthird}\leq \delta_G(4+X)$, then a solution of
problem (\ref{GlobDiscEquationLemma})--(\ref{GlobDiscProjConditionLemma})
satisfying (\ref{C23BoundMonodromyDiscGlobalTheory}) is unique.
Here and bellow we write $\delta_G(\ir)$ for $\delta_G(\omega_0,\constr\alsoDependonBaseFrac)$
because $\omega_0$ and $X$ are fixed.
\end{lemma}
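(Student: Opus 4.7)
My plan is to derive the global lemma from Lemma \ref{lem:LocalStableExistenceofHolomorphicDiscFamilies} by a covering argument, using uniqueness in the local theory to patch the locally constructed disc families into a global holomorphic map into $\cD\times\WV$.

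First I would fix a finite cover $V=\bigcup_{\ok=1}^N U_\ok$ by coordinate charts on each of which $\omega_0=\sqrt{-1}\ddbar \rho_0^\ok$, and a shrinking $U_\ok'\Subset U_\ok$ which still covers $V$. For each $\ok$, I apply Lemma \ref{lem:LocalStableExistenceofHolomorphicDiscFamilies} on a ball $B_1\subset\EC^n$ containing the closure of $U_\ok$ in the chart, with the local K\"ahler potential $\rho_0^\ok$ and boundary datum $F$ restricted to $\partial D\times U_\ok$, extended suitably. Choose $\delta_G\le \min_\ok \delta_L(\rho_0^\ok,\ir\alsoDependonBaseFrac)$; this produces local solutions $(f^\ok,h^\ok)\in C^{\ir-2}(\cD\times U_\ok'\torange \EC^n\times \EC^n)$ satisfying the local disc equation, boundary condition, base point normalization, together with the estimates (\ref{Cr-2BoundMonodromyDiscLocalTheoryNovember30})--(\ref{CrLipMonodromyDiscLocalTheory}).

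Next I would show these patches assemble into a global holomorphic map $\GF:\cD\times V\to \cD\times\WV$ realizing (\ref{GlobDiscEquationLemma})--(\ref{GlobDiscProjConditionLemma}). On each patch define
\[
\GF^{(\ok)}(\tau,z):=\bigl(\tau,\, z+f^\ok(\tau,z),\, [\partial_{i}\rho_0^\ok(z)+h^\ok_i(\tau,z)]\bigr),
\]
which is holomorphic in $\tau$ and, by (\ref{LocalDiscBoundaryCondition}), sends $\partial D\times U_\ok'$ into $\LSMfdF$. The very definition of $\WV$ via the equivalence $(\xi_i-\partial_i\rho_0^\ok)dz^i_\ok=(\eta_i-\partial_i\rho_0^\vm)dz^i_\vm$ is designed precisely so that the cotangent components transform consistently when one changes the trivialization; combined with the base change $z_\ok\to z_\vm$ it implies that on $U_\ok'\cap U_\vm'$ the two local solutions, if they both solve the local problem with uniqueness, must define the same section of $\WV$. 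The main step is therefore to verify this compatibility: express the system satisfied by the $\vm$-chart pullback of $\GF^{(\ok)}$ in the form (\ref{LocalDiscEquationLemma})--(\ref{LocalDiscOnePointBdConditionLemma}) on $U_\vm$, check the smallness bound $|(f,h)|_{\twothird}\le l$ holds after possibly shrinking $\delta_G$ using (\ref{C23BoundMonodromyDiscLocalTheory}), and invoke the uniqueness clause of Lemma \ref{lem:LocalStableExistenceofHolomorphicDiscFamilies} to conclude agreement with $\GF^{(\vm)}$ on overlaps. This produces a globally defined $\GF$ with $\ADF:=\Pi_{\cD\times V}\circ\GF$ satisfying (\ref{EquationofADFAug13}).

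Finally I would read off the estimates. The bounds (\ref{Cr-2BoundMonodromyDiscGlobalTheory})--(\ref{C23BoundMonodromyDiscGlobalTheory}) on $\ADF-Id$ follow immediately from (\ref{Cr-2BoundMonodromyDiscLocalTheoryNovember30})--(\ref{C23BoundMonodromyDiscLocalTheory}) applied on each chart, since on $U_\ok'$ the map $\ADF$ is expressed as $z\mapsto z+f^\ok(\tau,z)$, and the finite cover introduces only a constant depending on the fixed atlas. Likewise (\ref{C23LipMonodromyDiscGlobalTheory})--(\ref{CrLipMonodromyDiscGlobalTheory}) follow chart by chart from (\ref{C23LipMonodromyDiscLocalTheory})--(\ref{CrLipMonodromyDiscLocalTheory}), after observing that the fiber coordinates of $\GF$ depend on $(f,h)$ in a smooth linear affine way determined by $\rho_0^\ok$. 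Uniqueness of $\GF$ satisfying (\ref{GlobDiscEquationLemma})--(\ref{GlobDiscProjConditionLemma}) and (\ref{C23BoundMonodromyDiscGlobalTheory}) reduces, via the local trivializations and the global $C^{\twothird}$ bound, to the local uniqueness statement, again after possibly decreasing $\delta_G(4+X)$ so that each local representative lies in the corresponding set $\NHl$.

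The hard part will be the overlap consistency in the second paragraph: one has to verify that the transition law for $\WV$ exactly matches the algebraic relation satisfied by $\partial_i(\rho_0+F)$ in different charts, so that two local solutions which a priori live in different cotangent bundles actually define a single section of $\WV$. Everything else is then essentially a repackaging of the local estimates with the covering constant absorbed into $C(\constr\alsoDependonBaseFracornot)$.
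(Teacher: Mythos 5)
Your plan follows the same covering-and-patching strategy the paper uses: apply Lemma~\ref{lem:LocalStableExistenceofHolomorphicDiscFamilies} on a finite atlas, argue that the local disc families agree on overlaps, define $\GF$ globally, and transfer the local estimates. The one place where the two arguments genuinely differ is how the overlap consistency is established, and the paper's route avoids exactly the step you single out as the ``hard part.'' Instead of pulling the $\ok$-chart solution back to the $\vm$-chart and checking that the transformed $(f,h)$ again solves a local problem of the form~(\ref{LocalDiscEquationLemma})--(\ref{LocalDiscOnePointBdConditionLemma}) (which would require unwinding the transition law of $\WV$), the paper fixes a point $p\in U_\ok\cap U_\oi$, observes that for $|F|_\fourthird$ small \emph{both} discs $\dsLG_\ok\big|_{D\times\{p\}}$ and $\dsLG_\oi\big|_{D\times\{p\}}$ actually have image in $D\times\mP_V^{-1}(\Ook)$, so both can be written in the \emph{same} chart as $(\tau,p+f(\tau),k(\tau))$ and $(\tau,p+\hat f(\tau),\hat k(\tau))$. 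Since $\Lambda_F$ is intrinsic, each satisfies the same boundary relation~(\ref{GlobalTheoryBoundaryConditionfh})--(\ref{GlobalTheoryBoundaryConditionhatfh}); subtracting and applying H\"ormander's product estimates together with Lemma~\ref{lem:FamilyofConstantCoefficientRHProblem} yields $|f-\hat f|_\twothird\le C|F|_\fourthird|f-\hat f|_\twothird$, forcing $f=\hat f$ for $\delta_G$ small. No chart-transition bookkeeping for $\WV$ or explicit appeal to the uniqueness clause of Lemma~\ref{lem:LocalStableExistenceofHolomorphicDiscFamilies} is needed; the uniqueness used here is re-proved directly, pointwise, in the chosen chart. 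Also note that the paper's uniqueness clause in the local lemma is stated for solutions over the full ball $B_\half$, so as stated it does not apply verbatim to a solution living only on an overlap region; you would need the pointwise contraction argument anyway, which is precisely what the paper carries out. The remaining steps of your proposal --- choosing $\delta_G$ so that each $|F\circ\chi_\ok^{-1}|_\fourthird\le\delta_L(\ir,\rho_0^\ok)$, and reading off~(\ref{Cr-2BoundMonodromyDiscGlobalTheory})--(\ref{CrLipMonodromyDiscGlobalTheory}) from~(\ref{Cr-2BoundMonodromyDiscLocalTheoryNovember30})--(\ref{CrLipMonodromyDiscLocalTheory}) chart by chart --- match the paper.
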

\begin{proof}
Since  $V$ is a  compact K\"ahler manifold, $V$
can be covered by a finite number of open sets $\{\Uok\}_{\ok=1}^M$ such that for each $\ok$ there is
an open set $\Ook\supset \overline{\Uok}$ and   a biholomorphic map
$\chi_\ok:{\Ook}\rightarrow \EC^n$ satisfying:
\begin{enumerate}
\item $\chi_{\ok}\in C^\infty( \Ook);$
\item $\chi_{\ok}(\Ook)=B_1\subset \EC^n$;
\item $\chi_{\ok}(U_\ok)=B_\half\subset \EC^n$.
\end{enumerate}
Then for any $0<\fractionSpace<1$ and $\ir\geq \fourthird$, $\ir\notin \EZ$, there exists a
$\tilde{\delta}_G(\ir\alsoDependonBaseFrac, V,\omega_0)>0$ such that if
$F\in C^{\ir}(\partial D\times V\torange \ER)$ with $|F|_{\fourthird}\leq \tilde{\delta}_G$,
then
\begin{equation}\label{boundsInLocalizations}
|F\circ \chi_\ok^{-1}|_{\fourthird}\leq \delta_L(\ir,\rho_0^\ok\alsoDependonBaseFrac),
\end{equation}
 for any $\ok=1, ... , M$,
where $\chi_\ok$ is considered as a map from $D\times \Ook$ to $D\times B_1$ by
$\chi_\ok(\tau, x):=(\tau, \chi_\ok(x))$, and $\omega_0$ is
locally given by $\rho_0^\ok$, and $\delta_L(\ir,\rho_0^\ok\alsoDependonBaseFrac)$ is from
 Lemma \ref{lem:LocalStableExistenceofHolomorphicDiscFamilies}.
By Lemma \ref{lem:LocalStableExistenceofHolomorphicDiscFamilies} there exists
$$\dsLG_{\ok}\in C^{\ir-2}(\cD\times U_\ok\torange \cD\times \mP_V^{-1}(\Ook)) ,$$
 where $\mP_V$
 is the projection from $\WV$ to $V$, with $\dsLG_\ok$ satisfying
\begin{equation}
\left\{
\begin{array}{c}
\partial_{\overline\tau} \dsLG_\ok=0,\text{\ \ \ \ \ in $D\times U_\ok$};\\
\dsLG_{\ok}({\partial D\times U_\ok})\subset \Lambda_F;\\
\Pi_{D\times V}\circ \dsLG_\ok\big|_{\{-\sqrt{-1}\}\times U_\ok}\equiv Id;\\
\Pi_D\circ \dsLG_{\ok}=\pi_D,
\end{array}
\right.
\end{equation}
where
$\Pi_{D\times V}$, $\Pi_D$, $\pi_D$ are projections defined in the formulation of the Lemma.

Now, assuming that $U_\ok\cap U_{\oi}\neq \O$, we want to show
\[\dsLG_{\ok}\big|_{D\times(U_\ok\cap U_\oi)}=\dsLG_\oi\big|_{D\times(U_\ok\cap U_\oi)}.\]
So, let $p\in {U_\ok\cap U_\oi}$, then $\dsLG_{\ok}\big|_{D\times \{p\}}$ and $\dsLG_{\oi}\big|_{D\times \{p\}}$ are both holomorphic discs with boundaries attached to $\Lambda_F$. And if $\tilde\delta_G(\ir)$ is small enough, without loss of generality, we can assume
\[\dsLG_\oi(D\times \{p\})\subset D\times\mP_V^{-1}(\Ook),\]
and so, we can denote,
\[(\dsLG_{\ok}\big|_{D\times \{p\}})(\tau)=(\tau,p+f(\tau),k(\tau)),\]
\[(\dsLG_{\oi}\big|_{D\times \{p\}})(\tau)=(\tau,p+\hat f(\tau),\hat k(\tau)),\]
where $f,\hat f, k,\hat k$ are complex vector valued holomorphic functions on $D$, and they satisfy, with coordinates  $\{z^i\}_{i=1}^n$ on $\Ook$:
\begin{align}
\left[\partial_i(\rho_0+F)\right](\tau,p+f(\tau))=k_i(\tau), \text{ on }\partial D, \text{ for }i=1,\ ...\ M;\label{GlobalTheoryBoundaryConditionfh}\\
\left[\partial_i(\rho_0+F)\right](\tau,p+\hat f(\tau))=\hat k_i(\tau), \text{ on }\partial D, \text{ for }i=1,\ ...\ M;\label{GlobalTheoryBoundaryConditionhatfh}\\
f(-\sqrt{-1})=\hat f(-\sqrt{-1})=0.\ \ \ \ \ \ \ \ \ \ \ \ \ \ \ \ \ \
\end{align}
Taking difference of (\ref{GlobalTheoryBoundaryConditionfh}) (\ref{GlobalTheoryBoundaryConditionhatfh}) gives
\begin{align}
	&[\partial_{ij}\rho_0^\ok(p)](f-\hat f)^j+[\partial_{i\overline j}\rho_0^\ok(p)](f-\hat f)^{\overline j}-(k-\hat k)_i\nonumber\\
=&	-\int_0^1\int_0^1[\partial_{i\alpha\beta}\rho_0^\ok](p+vuf+v(1-u)\hat f)(uf+(1-u)\hat f)^\beta \,du\,dv\;(f-\hat f)^\alpha\nonumber\\
&\ \ \ \ -\int_0^1[\partial_{i\alpha}F](\tau,p+uf+(1-u)\hat f)\,du\, (f-\hat f)^\alpha.     \label{DifferenceofLocalDefinedDiscsGlobalTheory}
\end{align}

Applying Theorem A.7, A.8 of \cite{HormanderPhysicalGeodesy}, Lemma \ref{lem:FamilyofConstantCoefficientRHProblem} and (\ref{C23BoundMonodromyDiscLocalTheory}) of Lemma \ref{lem:LocalStableExistenceofHolomorphicDiscFamilies} to (\ref{DifferenceofLocalDefinedDiscsGlobalTheory}) gives
\[|f-\hat f|_{\twothird}\leq C(\omega_0, V)|f-\hat f|_{\twothird}(|f|_\twothird+|\hat f|_\twothird+|F|_{\fourthird})\leq C(\omega_0)|f-\hat f|_\twothird|F|_\fourthird.\]

When
\[|F|_{\fourthird}\leq \min \left\{\tilde\delta_G(\fourthird\alsoDependonBaseFrac,V,\omega_0),{1\over 2C(\omega_0, V\alsoDependonBaseFrac)}\right\},\]
we have $f=\hat f$. So those locally constructed holomorphic discs over different coordinate patches
match, so  (\ref{Cr-2BoundMonodromyDiscGlobalTheory}),
(\ref{C23BoundMonodromyDiscGlobalTheory})
follow from (\ref{Cr-2BoundMonodromyDiscLocalTheoryNovember30}),
(\ref{C23BoundMonodromyDiscLocalTheory}). Similarly,
(\ref{C23LipMonodromyDiscGlobalTheory}), (\ref{CrLipMonodromyDiscGlobalTheory})
 follow from (\ref{C23LipMonodromyDiscLocalTheory}), (\ref{CrLipMonodromyDiscLocalTheory}) directly.
 This proves Lemma \ref{lem:GlobalStableExistenceofHolomorphicDiscFamilies}.
 Also, uniqueness statement follows from the bounds (\ref{boundsInLocalizations})
 in localizations and the uniqueness in Lemma \ref{lem:LocalStableExistenceofHolomorphicDiscFamilies}.
\end{proof}

Next we discuss the  invertibility of the map $\ADF$.

\begin{lemma}\label{lem:InverseofGlobalMonodromy}
In addition to the statement of Lemma \ref{lem:GlobalStableExistenceofHolomorphicDiscFamilies},
 there exists a $\delta_I(\ir\alsoDependonBaseFracornot,V,\omega_0)$, with $0<\delta_I\leq \delta_G$
 such that if $F\in C^\ir(\partial D\times V)$, satisfies $|F|_{\fourthird}\leq \delta_I$, then $\ADF$ is invertible and satisfies
\begin{equation}|\ADFi-Id|_{\ir-2;\cD\times V}\leq
C(\fractionSpace,\ir\alsoDependonBaseFracornot)|F|_{\ir}.\label{DifferenceofADFiandId}
\end{equation}

Moreover,  if $\ir \geq \fivethird$, then for any $F, \hat F$ with $|F|_{\fourthird},|\hat F|_{\fourthird}$ both small enough,  the corresponding $\ADF$, $\ADHF$ satisfy
\begin{equation}|\ADFi-\ADHFi|_{\ir-3;\cD\times V}\leq C(\ir\alsoDependonBaseFracornot,
|F|_\ir,|\hat F|_\ir )|F-\hat F|_{\ir}.\label{DifferenceofADFiandADHFi}
\end{equation}

\end{lemma}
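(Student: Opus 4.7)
My plan rests on the fact that, by $\pi_D\circ\ADF=\pi_D$ and the bound (\ref{C23BoundMonodromyDiscGlobalTheory}), for each fixed $\tau\in\cD$ the map $\ADF(\tau,\cdot):V\to V$ lies $C^{\twothird}$-close to the identity whenever $|F|_{\threethird}$ is small. So I would first shrink $\delta_I\le\delta_G$ so that $d\ADF(\tau,\cdot)$ is uniformly close to the identity on $V$; the inverse function theorem then yields a local smooth fiberwise inverse, and compactness of $V$ together with $\ADF(\tau,\cdot)$ being $C^0$-close to $Id_V$ (hence a covering of degree one, being homotopic to $Id_V$) upgrades this to a global diffeomorphism. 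Assembling across $\tau\in\cD$ produces $\ADFi\in C^{\ir-2}(\cD\times V\torange\cD\times V)$ with $\pi_D\circ\ADFi=\pi_D$ and $\ADFi(-\sqrt{-1},\cdot)=Id$.

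For (\ref{DifferenceofADFiandId}), I would exploit the algebraic identity
\[
\ADFi-Id=-(\ADF-Id)\circ\ADFi
\]
and apply the H\"older composition estimate (Theorem A.8 of \cite{HormanderPhysicalGeodesy}) to the right-hand side, yielding a bound of the schematic form
\[
|\ADFi-Id|_{\ir-2}\le C\bigl(|\ADF-Id|_{\ir-2}(1+|\ADFi|_1)^{\ir-2}+|\ADF-Id|_1\,|\ADFi-Id|_{\ir-2}\bigr).
\]
Since $|\ADF-Id|_1\le C|F|_{\threethird}$ can be made as small as desired by further shrinking $\delta_I$, and $|\ADFi|_1$ is uniformly bounded thanks to the inverse function theorem, the term proportional to $|\ADFi-Id|_{\ir-2}$ on the right absorbs into the left; combining with (\ref{Cr-2BoundMonodromyDiscGlobalTheory}) then delivers (\ref{DifferenceofADFiandId}). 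The delicate point is that the smallness needed for absorption must be controlled by the low-order norm $|F|_{\threethird}$ rather than by $|F|_\ir$, in the same spirit as Step 1 of the proof of Lemma \ref{lem:LocalStableExistenceofHolomorphicDiscFamilies}.

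For (\ref{DifferenceofADFiandADHFi}), I would derive a mean-value identity by setting $x=\ADHFi(y)$, so that $y=\ADHF(x)$ and $\ADFi(\ADF(x))=x$, which gives
\[
\ADFi(y)-\ADHFi(y)=\ADFi(\ADHF(x))-\ADFi(\ADF(x))=\int_0^1 D\ADFi\bigl(\ADF(x)+s(\ADHF-\ADF)(x)\bigr)\,ds\cdot(\ADHF-\ADF)(x).
\]
Taking $C^{\ir-3}$ norms on both sides, the factor $D\ADFi$ costs exactly one derivative relative to the bound on $\ADFi$ itself, which accounts both for the drop from $C^{\ir-2}$ to $C^{\ir-3}$ and for the hypothesis $\ir\ge\fivethird$ (so that $\ir-3\ge\twothird$ keeps the composition estimates in a H\"older class above $C^1$). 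Combining a H\"older composition bound with the $C^{\ir-2}$ control on $\ADFi,\ADHFi$ from the previous paragraph and the Lipschitz estimate (\ref{CrLipMonodromyDiscGlobalTheory}) for $\ADF-\ADHF$ then produces the claimed bound. I expect this loss-of-one-derivative step to be the principal technical obstruction, and it is also what prevents upgrading (\ref{DifferenceofADFiandADHFi}) to a $C^{\ir-2}$ estimate.
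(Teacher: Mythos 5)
Your proposal follows essentially the same route as the paper: the identity $\ADFi-Id=-(\ADF-Id)\circ\ADFi$ combined with Hörmander's composition estimate (Theorem A.8), with the smallness for absorption controlled by the low-order norm; and the same mean-value decomposition $\ADFi-\ADHFi=\int_0^1 D\ADFi((1-s)\ADF\ADHFi+s\,Id)\,ds\,\cdot\,(\ADHF-\ADF)\ADHFi$ for the comparison of inverses, with the loss of one derivative and the role of $\ir\geq\fivethird$ correctly diagnosed. The only cosmetic difference is that you spell out the global invertibility of the perturbed identity map via a degree argument, where the paper merely asserts it; otherwise the two proofs match line for line.
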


\begin{proof}

For the invertibility of $\ADF$,  since
\[|\ADF-Id|_\twothird\leq C|F|_\fourthird,\]
when $|F|_{\fourthird}$ small enough, we have $\ADF$ is one-to-one
 with non-degenerate Jacobian, so $\ADFi$ is well defined, and has same
 differentiability as $\ADF$.

 Now we estimate $\ADFi-Id$.
Locally,
we can write
\[\ADFi-Id=(-\ADF+Id)\circ \ADFi.\]
Applying Theorem A.8 of \cite{HormanderPhysicalGeodesy}  to above expression, gives
\[|\ADFi-Id|_1\leq C|\ADF-Id|_1(1+|\ADFi-Id|_1).\]
 When $|F|_\fourthird$, and so $|\ADF-Id|_1$ is small enough, we have
\begin{equation}\label{DifferenceofADFiandId-prelim}
|\ADFi-Id|_1\leq C|\ADF-Id|_1.
\end{equation}

Then using Theorem A.8 of \cite{HormanderPhysicalGeodesy} again with $a=
\ir-2$, we get
\begin{align*}
|\ADFi-Id|_\irtwothird&\leq C(\ir)\left(|\ADF-Id|_1(1+|\ADFi-Id|_\irtwothird)+|\ADF-Id|_\irtwothird(1+|\ADFi-Id|_1)^{\irtwothird}\right)\\
						&\leq C(\ir) |\ADF-Id|_\irtwothird+C(\ir)|\ADF-Id|_1|\ADFi-Id|_\irtwothird.
\end{align*}
So, when $|F|_\fourthird$, and so $|\ADF-Id|_1$ small enough, we have, using
also (\ref{DifferenceofADFiandId-prelim}):
\[|\ADFi-Id|_\irtwothird\leq C(\ir)|\ADF-Id|_\irtwothird\leq C(\ir) |F|_\irfourthird.\]
This proves (\ref{DifferenceofADFiandId}).

Now, given $F$ and $\hat F$, with $|F|_\fourthird$, $|\hat F|_\fourthird$ both small enough, we want to compare $\ADFi$ with $\ADHFi$.

Locally, we can write $\ADFi-\ADHFi$ as
\begin{align*}
	&\ADFi-\ADHFi\\
=	&\ADFi(Id)-\ADFi(\ADF\ADHFi)\\
=	&\int_0^1\frac{d}{du}\left[\ADFi\left(uId+(1-u)\ADF\ADHFi\right)\right]du\\
=	&\int_0^1\frac{\partial\ADFi}{\partial z^\alpha}\left(uId+(1-u)\ADF\ADHFi\right)du\left(Id-\ADF\ADHFi\right)^\alpha\\
=	&\int_0^1\frac{\partial\ADFi}{\partial z^\alpha}\left(uId+(1-u)\ADF\ADHFi\right)du\left((\ADHF-\ADF)\ADHFi\right)^\alpha,
\end{align*}
where the index $\alpha$ runs over  $1, ..., n$ and $\overline 1,..., \overline n.$

Then applying A.7, A.8 of \cite{HormanderPhysicalGeodesy} and
Lemma \ref{lem:GlobalStableExistenceofHolomorphicDiscFamilies}, provided $\ir\geq \fivethird$,  and $|F|_\fourthird, |\hat F|_\fourthird$ both small enough, we get
\[|\ADFi-\ADHFi|_{\ir-3}\leq C(\ir, |F|_\ir,|\hat F|_\ir )|F-\hat F|_{\ir}.\]
\end{proof}

\begin{remark}\label{rem:VaryingIndexJan82018}
Using uniqueness in Lemma \ref{lem:GlobalStableExistenceofHolomorphicDiscFamilies}, we obtain
a global version of Remark \ref{lowerIndexesRmk}. Namely, assuming without loss
of generality that  $\delta_G(\gamma)\le\delta_G(4+X)$
for all $\gamma\ge 4+X$, we have that uniqueness in
Lemma  \ref{lem:GlobalStableExistenceofHolomorphicDiscFamilies} implies solutions obtained by the application of
Lemma \ref{lem:GlobalStableExistenceofHolomorphicDiscFamilies} does not depend on $\gamma$, so
for the given $F, \hat F\in C^\gamma$,
application of Lemma \ref{lem:GlobalStableExistenceofHolomorphicDiscFamilies} with any
$\gamma_1\in [4+X(\text{or }5+X), \gamma]$ will give estimates
(\ref{Cr-2BoundMonodromyDiscGlobalTheory})--(\ref{CrLipMonodromyDiscGlobalTheory}) and
(\ref{DifferenceofADFiandId})--(\ref{DifferenceofADFiandADHFi})
with $\gamma_1$ instead of $\gamma$.
\end{remark}

\subsection{Potential Function}
Now, with Lemma \ref{lem:GlobalStableExistenceofHolomorphicDiscFamilies}, Lemma \ref{lem:InverseofGlobalMonodromy} and the analysis of \cite{DonaldsonHolomorphicDiscs} and \cite{Semmes}, we can construct potential function solving Problem \ref{prop:DonaldsonDiscDirichletProblem}.
 In Section \ref{ProcedureofDonaldsonSemmes}, we will first follow method in \cite{DonaldsonHolomorphicDiscs} and \cite{Semmes} to construct the potential and perform
its estimates in H\"older spaces. However, the  comparison of potential functions leads to a loss of three orders of  regularity. Then, in Section \ref{ImprovingRegularity}, we improve the comparison result
 (\ref{DonaldsonComparisonAug15}) of Section \ref{ProcedureofDonaldsonSemmes}, using the integration of leafwise harmonic function, to obtain (\ref{CrLipPotentialDisc}),
and complete the proof of Proposition \ref{prop:DonaldsonDiscDirichletProblem}.

\subsubsection{Construction of Donaldson and Semmes}
\label{ProcedureofDonaldsonSemmes}
With Lemma \ref{lem:GlobalStableExistenceofHolomorphicDiscFamilies},
for the given $0<\fractionSpace<1,$ $F\in C^{\ir}(\partial D\times V)$, with $\ir\geq \fourthird$, and
\[|F|_\fourthird\leq \delta_I(\ir),\]
where $\delta_I(\cdot)$ is from Lemma \ref{lem:InverseofGlobalMonodromy},  we can find the corresponding
\[\GF\in C^{\ir-2}(\cD\times V\torange \cD\times \WV),\] 
and locally $\GF$ is given by
\[(\tau,z)\rightarrow(\tau, z+f(\tau,z),\partial\rho_0(z)+h(\tau,z)).\]
where $f$ and $h$ are holomorphic with respect to
 $\tau$. Moreover, from (\ref{GlobDiscBoundaryCondition}),
 (\ref{GlobDiscProjConditionLemma}) and Lemma  \ref{lem:InverseofGlobalMonodromy}
we obtain that for each $\tau\in\partial D$, the map $\GF(\tau, \cdot)$ maps
$V$ diffeomorphically to the LS-graph $\Lambda_F\cap(\{\tau\}\times \WV)$.
Then according to  Lemma 3 and Proposition 1 of \cite{DonaldsonHolomorphicDiscs},
 for any $\tau\in \overline D$,  there exists $p^\tau$, a real valued function on $V$, satisfying
\begin{equation}\partial\rho_0+h|_{\{\tau\}\times V}=[\partial(\rho_0+ p^\tau)](z+f),    \label{hisdifferentialLocalExpression_2017_1221}
\end{equation} locally.
Since adding a constant  to $p^\tau$ does not change (\ref{hisdifferentialLocalExpression_2017_1221}), we can choose a point $\zeroptofp$ on $V$, independent of $\tau$, and let
\begin{equation}\label{OnePointConditionofP_Dec_21_DS}
p^\tau(\zeroptofp)=0, \ \ \ \ \forall \tau\in \overline D.
\end{equation}
Denote $$P(\tau,z)=p^\tau(z),$$
we will then construct solution to Problem \ref{prob:DirichletonD}, based on $P$.

Since $F\in C^{\ir}(\partial D\times V)$,  $\GF\in C^{\ir-2}(\cD\times V)$,
using (\ref{hisdifferentialLocalExpression_2017_1221}) (\ref{OnePointConditionofP_Dec_21_DS}) (\ref{Cr-2BoundMonodromyDiscLocalTheoryNovember30})
(\ref{C23BoundMonodromyDiscLocalTheory}), we get
\[|P|_{\ir-2}\leq C(\ir)|\partial p^\tau|_{\ir-2}\leq C({\ir})(|h(\AD_F^{-1})|_{\ir-2}+|\ADFi-Id|_{\ir-2})\leq C(\ir)|F|_{\ir},\]
\[|P|_{2}\leq C(\ir)|F|_{\fourthird}.\]

Next, we use the fact that
$\GF$ is holomorphic with respect to $\tau$, i.e. 
for a fixed $z$,
\begin{equation}
(\tau,z)\rightarrow (\tau, z+f(\tau,z), [\partial_V (\rho_0+P)](\tau,z+f)),
\end{equation}
is holomorphic with respect to $\tau$.
Taking $\partial_{\overline \tau}$ derivative of the third component of the  expression above gives
\begin{equation}\label{dtaubarderivativeofhComponent_2017_Dec_21}
[\partial_{i\overline\tau}(\rho_0+P)](\ADF(\tau,z))+[\partial_{i\overline j}(\rho_0+P)](\ADF(\tau,z))
\overline{\left(
\frac{\partial f^j(\tau, z)}{\partial \tau}
\right)}=0.
\end{equation}

Then using $\partial_{i\overline j}(\rho_0+P)=g_{i\overline j}$ in (\ref{dtaubarderivativeofhComponent_2017_Dec_21}), we have
\begin{equation}\label{derivativeofffrom_hholomorphic_2017_Dec_21}
\frac{\partial f^j}{\partial \tau}=-[\partial_{\overline i \tau}(\rho_0+P)g^{\overline i j}](\ADF(\tau,z)),
\end{equation}
where $[g_{i\overline j}]$ is invertible because $[\rho_{0,i\overline j}]$ is invertible, and $P_{i\overline j}$ is close to zero.
Then taking $\partial_{\overline \tau}$ derivative of (\ref{derivativeofffrom_hholomorphic_2017_Dec_21}),  and using again
\[\frac{\partial f^j}{\partial \overline \tau}=0, \]
gives
\begin{align}
0=
 g_{j\overline k}&(\ADF(\tau,z))
\left[\frac{\partial^2 f^j}{\partial \tau\partial \overline\tau}\right]
=\left\{
-\partial_{\overline k\tau\overline\tau}(\rho_0+P)
+
\partial_{\overline i\tau}(\rho_0+P)g^{\overline i l}\partial_{l\overline k\overline\tau}(\rho_0+P)
\right\}\circ(\ADF(\tau,z))
\nonumber\\
&+\left\{-\partial_{\overline k\tau\overline l}(\rho_0+P)
+\partial_{\overline i\tau}(\rho_0+P)g^{\overline i \mu}\partial_{\mu\overline k\overline l}(\rho_0+P)\right\}\circ(\ADF(\tau,z))\cdot\overline{\left(\frac{\partial f^l}{\partial \tau}\right)}
.
\end{align}

Then substituting  (\ref{dtaubarderivativeofhComponent_2017_Dec_21}) into
the above equation
we get
\begin{align}
0=\left\{\left[-\partial_{\tau\overline \tau}(\rho_0+P)+\partial_{\tau\overline i}(\rho_0+P)g^{\overline i j}\partial_{\overline\tau j}(\rho_0+P)\right]_{\overline k}\right\}(\ADF(\tau,z)).
\label{kbarDerivativeofLaplaceQ}
\end{align}
So we know
\[
\partial_{\tau\overline \tau}(\rho_0+P)-\partial_{\tau\overline i}(\rho_0+P)g^{\overline i j}\partial_{\overline\tau j}(\rho_0+P)
\]
depends only on $\tau$. And since $\rho_0$ does not depend on $\tau$, then
\[P_{\tau \overline\tau}-P_{\tau \overline i} g^{\overline i j}P_{j\overline \tau}
\]
is a globally defined function on $D\times V$, depending only on $\tau$. We have
\[P_{\tau \overline\tau}-P_{\tau \overline i} g^{\overline i j}P_{j\overline \tau}\in C^{\ir-4}(\cD),\]
since $P\in C^{\ir-2}(\cD\times V)$. Solving Dirichlet problem for Laplacian equation in $\tau$
variables in $\cD$, we can find a $Q\in C^{\ir-2}(\cD)$ satisfying
\begin{align}
\label{equationofQ_2017_Dec_21}
&Q_{\tau\overline\tau}=-P_{\tau \overline\tau}+P_{\tau \overline i} g^{\overline i j}P_{j\overline \tau} \ \ \ \ \text{ in }D,\\
&Q\big|_{\partial D}=F\big|_{\partial D\times \{\zeroptofp\}};
\label{BdryCondofQ_2017_Dec_21}
\end{align}
where $\zeroptofp\in V$ is the point fixed above, see (\ref{OnePointConditionofP_Dec_21_DS}).
If we consider $Q$ as a function defined on $\cD\times V$, then (\ref{equationofQ_2017_Dec_21}) is satisfied in $D\times V$ and, since $Q$ only depends on $\tau$, we can transform (\ref{equationofQ_2017_Dec_21}) into
\[(P+Q)_{\tau\overline\tau}=(P+Q)_{\tau \overline i} g^{\overline i j}_{}(P+Q)_{j\overline \tau}.\]
Then $\Phi=P+Q$ is a solution to Problem \ref{prob:DirichletonD} with boundary value $F$,
as explained in the first two pages of \cite{Semmes}. And for the boundary condition,
we use
that $\GF(\partial D\times V)$ lies in $\LSMfdF$, so
$P-F$  on $\partial D\times V$ depends only on $\tau$,
and then from (\ref{OnePointConditionofP_Dec_21_DS}) and (\ref{BdryCondofQ_2017_Dec_21}) we have
$\Phi=F$ on $\partial D\times V$.

Since  $Q$ does not depend on $z$, estimates for the Dirichlet problem (\ref{equationofQ_2017_Dec_21})--(\ref{BdryCondofQ_2017_Dec_21}) imply the following estimates
on $\cD\times V$
\begin{align*}
&|Q|_{\ir-2}\leq C(\ir)(|Q_{\tau\overline\tau}|_{\ir-4}+|F|_{\ir-2})\leq  C(\ir,|P|_2)(|P|_{\ir-2}+|F|_{\ir-2})\leq  C(\ir)|F|_{\ir},
\end{align*}
and from this we get
\begin{align*}
&|\Phi|_{\ir-2}\leq |P|_{\ir-2}+|Q|_{\ir-2}\leq C(\ir)|F|_{\ir}.
\end{align*}

Then we will do  a comparison of two solutions. Suppose we have  $ F,\ \hat F\in C^{\ir+1}(\partial D\times V)$, with
\[|F|_\fourthird, \ |\hat F|_\fourthird\leq \min\{\delta_I(\ir+1), \delta_I(\fivethird)\},\]
we will have corresponding
 $\GF, p^\tau,  P, Q$, $\Phi$ and
$\hGF,\  \hat p^\tau, \hat P, \hat Q$, $\hat \Phi$.  We then compare these quantities.

Taking difference of (\ref{hisdifferentialLocalExpression_2017_1221}) and the corresponding
equation for $\hat h, \hat P$, gives
\[\partial p^{\tau}-\partial \hat p^\tau=h(\AD_F^{-1})-\hat h({\ADHF}^{-1})+\partial \rho_0(\ADFi)-\partial\rho_0(\ADHFi)\]
 then using Theorem A.7, A.8 of  \cite{HormanderPhysicalGeodesy} and (\ref{DifferenceofADFiandADHFi}) of Lemma \ref{lem:InverseofGlobalMonodromy} we get
\begin{align*}
|P-\hat P|_{\ir-2}&\leq |(h-\hat h)(\ADF^{-1})|_{\ir-2}+\left|[\int_{0}^1 \frac{\partial\hat h}{\partial z^\alpha}(u \ADF^{-1}+(1-u)\ADHF^{-1})du](\ADF^{-1}-\ADHF^{-1})^\alpha\right|_{\ir-2}\\
                   &\ \ \ \ \ +\left|[\int_{0}^1 \partial^2 \rho_0(u \ADF^{-1}+(1-u)\ADHF^{-1})du](\ADF^{-1}-\ADHF^{-1})\right|_{\ir-2}\\
				&\leq C(\ir, |F|_{\ir+1}, |\hat F|_{\ir+1})|F-\hat F|_{\ir+1}.
\end{align*}
Subsequently, we have
\[|\Phi-\hat \Phi|_{\ir-2}\leq C(\ir, |F|_{\ir+1},|\hat F|_{\ir+1})|F-\hat F|_{\ir+1}.\]

So, we proved the following: 
\begin{lemma}\label{lem:DonaldsonSemmesConstructionofPotentialFunction}
Given $0<\fractionSpace<1,$ $F\in C^{\ir}(\partial D\times V\torange \ER)$, $\ir\geq \fourthird$, $\ir \notin \EZ$, with
\[|F|_\fourthird\leq \delta_I(\ir),\]
there exists a solution $\Phi\in C^{\ir-2}(\cD\times V\torange \ER)$ to Problem \ref{prob:DirichletonD} with boundary value $F$, satisfying
\begin{equation}|\Phi|_{\ir-2}\leq  C(\ir)|F|_{\ir}.						\label{DonaldsonControlofPotentialFunctionAug15}
\end{equation}

Moreover, given two boundary values $F, \hat F$ satisfying
\[|F|_\fourthird, \ |\hat F|_\fourthird\leq \min\{\delta_I(\ir+1), \delta_I(\fivethird)\},\]
 the corresponding solution $\Phi$ and $\hat \Phi$ satisfy
\begin{equation}
|\Phi-\hat \Phi|_{\ir-2}\leq C(\ir, |F|_{\ir+1}, |\hat F|_{\ir+1})|F-\hat F|_{\ir+1}.				\label{DonaldsonComparisonAug15}
\end{equation}
\end{lemma}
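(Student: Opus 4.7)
The plan is to follow the Donaldson--Semmes scheme outlined just above, treating the Lemma as a packaging of that construction together with the H\"older estimates supplied by Lemmas \ref{lem:GlobalStableExistenceofHolomorphicDiscFamilies} and \ref{lem:InverseofGlobalMonodromy}. First I would apply Lemma \ref{lem:InverseofGlobalMonodromy} with the given $F$ to obtain the map $\GF\in C^{\ir-2}(\cD\times V\torange\cD\times \WV)$, whose local description $(\tau,z)\mapsto(\tau,z+f,\partial\rho_0+h)$ exhibits $f,h$ as holomorphic in $\tau$ with boundary image in $\Lambda_F$. Since $\Lambda_F$ is defined by the 1-form $\partial F$, on each chart the relation $\partial\rho_0+h=\partial(\rho_0+p^\tau)(z+f)$ has a real-valued primitive $p^\tau$, and I would normalize it globally by fixing $\zeroptofp\in V$ and demanding $p^\tau(\zeroptofp)=0$. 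The resulting $P(\tau,z)=p^\tau(z)$ inherits $C^{\ir-2}$-regularity from $h$ and $\ADFi$, with the desired bound $|P|_{\ir-2}\le C(\ir)|F|_\ir$ coming directly from combining (\ref{Cr-2BoundMonodromyDiscGlobalTheory}) and (\ref{DifferenceofADFiandId}).

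The core PDE observation would be next. Differentiating the holomorphicity of the third component of $\GF$ in $\bar\tau$ and eliminating $\partial_\tau f$ via the resulting algebraic relation produces
\begin{equation*}
\bigl(P_{\tau\bar\tau}-P_{\tau\bar i}\,g^{\bar i j}\,P_{j\bar\tau}\bigr)_{\bar k}\circ\ADF=0,
\end{equation*}
so the function $P_{\tau\bar\tau}-P_{\tau\bar i}g^{\bar i j}P_{j\bar\tau}$ descends to a function of $\tau$ alone in $C^{\ir-4}(\cD)$. I would then solve the flat Dirichlet problem on $\cD$ for a correction $Q=Q(\tau)$ with $Q_{\tau\bar\tau}$ equal to the negative of this function and with boundary datum $F|_{\partial D\times\{\zeroptofp\}}$; interior $C^{\ir-2}$-regularity for the Poisson equation on the disc gives $|Q|_{\ir-2}\le C(\ir)|F|_\ir$. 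Setting $\Phi=P+Q$ converts the scalar identity above into the HCMA relation $\Phi_{\tau\bar\tau}=\Phi_{\tau\bar i}g^{\bar ij}\Phi_{j\bar\tau}$, and since $\Lambda_F$ fibres over $F$ along $\partial D\times V$ up to a function of $\tau$, the normalizations force $\Phi|_{\partial D\times V}=F$, giving the existence statement and (\ref{DonaldsonControlofPotentialFunctionAug15}).

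For the comparison assertion (\ref{DonaldsonComparisonAug15}), the plan is to subtract the two identities defining $p^\tau$ and $\hat p^\tau$, writing
\begin{equation*}
\partial p^\tau-\partial\hat p^\tau=h(\ADFi)-\hat h(\ADHFi)+\partial\rho_0(\ADFi)-\partial\rho_0(\ADHFi),
\end{equation*}
and to expand each difference with a one-parameter interpolation so that Theorem A.8 of \cite{HormanderPhysicalGeodesy} yields a product of a high-norm factor with a low-norm difference. The Lipschitz bounds (\ref{CrLipMonodromyDiscGlobalTheory}) and (\ref{DifferenceofADFiandADHFi}) then control everything in terms of $|F-\hat F|_{\ir-1}$, $|F-\hat F|_\ir$, $|F-\hat F|_{\ir+1}$, while high norms of $\ADFi, \hat\ADHFi, h, \hat h$ are absorbed into the constant $C(\ir,|F|_{\ir+1},|\hat F|_{\ir+1})$. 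Elliptic estimates on the disc for $Q-\hat Q$ give the same bound, so $|\Phi-\hat\Phi|_{\ir-2}\le C|F-\hat F|_{\ir+1}$ follows.

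The main obstacle I would expect is precisely the triple loss of regularity visible in the comparison step: Lemma \ref{lem:InverseofGlobalMonodromy} already costs one derivative when comparing $\ADFi$ and $\ADHFi$, and composition with the non-Lipschitz-in-high-norm map $h$ injects one further derivative loss through the chain rule / interpolation estimate. This is why the statement needs the shift $\ir\mapsto\ir+1$ on the right-hand side and why a direct Banach-space fixed-point argument cannot reclaim the lost derivatives; the point of the subsequent Section \ref{ImprovingRegularity} is to eliminate this loss by the leafwise-harmonic integration trick. For the present lemma I would accept the derivative loss and simply ensure that $\delta_I(\ir+1)$ and $\delta_I(\fivethird)$ are the smallness thresholds needed so that both Lemmas \ref{lem:GlobalStableExistenceofHolomorphicDiscFamilies} and \ref{lem:InverseofGlobalMonodromy} apply at levels $\ir$ and $\ir+1$ simultaneously.
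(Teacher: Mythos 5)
Your proposal reproduces the Donaldson--Semmes construction given in the paper: the same primitive $P=p^\tau$ normalized at $\zeroptofp$, the same observation that $\bar\tau$-holomorphicity of $\GF$ forces $P_{\tau\bar\tau}-P_{\tau\bar i}g^{\bar ij}P_{j\bar\tau}$ to depend on $\tau$ alone, the same Dirichlet solve for $Q(\tau)$ with $\Phi=P+Q$, and the same comparison via $\partial p^\tau-\partial\hat p^\tau=h(\ADFi)-\hat h(\ADHFi)+\partial\rho_0(\ADFi)-\partial\rho_0(\ADHFi)$ combined with (\ref{DifferenceofADFiandADHFi}) and H\"ormander's composition estimates. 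This is essentially the paper's argument, including your correct diagnosis that the three-derivative loss in (\ref{DonaldsonComparisonAug15}) is inherent to this route and is later removed by the leafwise-harmonic integration of Section \ref{ImprovingRegularity}.
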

We note that the estimate (\ref{DonaldsonControlofPotentialFunctionAug15}) confirms
(\ref{CrEstimateofPhiDisc}). Moreover,
since $\Phi=P+Q$, where $Q$ depends only on $\tau$, we can replace $P$ by $\Phi$ in
(\ref{dtaubarderivativeofhComponent_2017_Dec_21}). Then it follows that
\[\partial_{\tau}+\frac{\partial f^i}{\partial \tau}\partial_{z^i}\]
lies in kernels of $\Omega_0+\sqrt{-1}\partial\overline\partial \Phi$.
That is, $\ADF$ satisfies (\ref{EquationofADFAug13}) with $\Phi$ constructed above.
And, estimates (\ref{CrEstimateofADFDisc}) (\ref{CrEstimateofADFiDisc}) (\ref{CrLipADFDisc})
for $\ADF$ follow from (\ref{Cr-2BoundMonodromyDiscGlobalTheory}), (\ref{DifferenceofADFiandId}), (\ref{CrLipMonodromyDiscGlobalTheory}), respectively.

\subsubsection{Improving Comparison Result}				\label{ImprovingRegularity}
Now suppose we are given $F_0, F_1\in C^{\ir+3}$, $\ir\geq \fourthird$, $\ir \notin \EZ$, satisfying
\[| F_0|_\fourthird,|F_1|_\fourthird\leq \min\{\delta_I(\ir+3), \delta_I(\ir), \delta_I(\fivethird)\}.\]
 Denote
\[F_\lambda=\lambda F_1+(1-\lambda)F_0,\]
then  $F_\lambda$'s satisfy
\[|F_\lambda|_\fourthird\leq\min\{\delta_G(\ir+3), \delta_G(\ir), \delta_G(\fivethird)\}.\]
By Lemma \ref{lem:DonaldsonSemmesConstructionofPotentialFunction}, for any $\lambda\in [0,1]$
there exists $\Phi_\lambda\in C^\ir(\cD\times V)$ which solves Problem \ref{prob:DirichletonD} with boundary value $F_\lambda$.

We want to show that this family of solutions $\Phi_\lambda$ can be linearly approximated at any $\lambda\in [0,1]$, and the linearization is integrable as a family of $C^{\ir-2}$ functions.
We will construct a family of functions $H_{\lambda}$ which can be considered as the differential of $\Phi_\lambda$ with respect to $\lambda$, show the higher regularity of $H_\lambda$,
use it to obtain the higher order estimate of $\Phi_1-\Phi_0$.

Our argument will consist of the following 3 steps:
\begin{description}
\item[Step 1.] For any $\lambda\in[0,1]$, we construct an $H_\lambda\in C^{\ir-2}(\cD\times V\torange \ER)$ satisfying
\begin{equation}|H_\lambda|_{\ir-2}\leq C(\ir)\left(|F_0-F_1|_{\ir-2}+(1+|F_0|_\ir+|F_1|_\ir)|F_0-F_1|_2\right). \label{EstimateofLinearizationofFamilyofSolution}
\end{equation}
\item[Step 2.] We show that, as a function of $\lambda$,  $H_\lambda$ is  continuous with respect to $C^{\ir-2}$ norm.
\item[Step 3.] For any $\lambda,\nu \in [0,1],$\[|\Phi_{\lambda}-\Phi_{\nu}-(\lambda-\nu)H_{\nu}|_0\leq C(|F_0|_\fivethird, |F_1|_\fivethird)(\lambda-\nu)^2.\]
\end{description}

With the result of Steps 2 and 3, we can express $\Phi_1-\Phi_0$ as
\[\Phi_1-\Phi_0=\int_0^1H_\lambda d\lambda,\]
and with  Steps 1 and 2, we obtain that $\Phi_1-\Phi_0\in C^{\ir-2}(\cD\times V)$ and
\[|\Phi_1-\Phi_0|_{\ir-2}\leq \int_0^1|H_\lambda|_{\ir-2}d\lambda\leq C(\ir)\left(|F_0-F_1|_{\ir-2}+(1+|F_0|_\ir+|F_1|_\ir)|F_0-F_1|_2\right) .\]

{\flushleft{\bf Step 1.}}

We define the following operator $\prodh:C^{\ir-2}(\partial D\times V)\rightarrow
C^{\ir-2}(\cD\times V)$: given $u\in C^{\ir-2}(\partial D\times V\torange \ER)$, $\prodh(u)$
is the unique solution of
\begin{equation}\label{DirichletWithParam}
\left\{
\begin{array}{cc}
\triangle_\tau \prodh(u)=0 , & \text{ in }D\times V;\\
\prodh(u)=u,&   \text{ on }\partial D\times V.
\end{array}
\right.
\end{equation}
Then  $\prodh(u)\in C^{\ir-2}(\cD\times V\torange \ER)$ and
\begin{equation}\label{EstDirichletWithParam}
\begin{split}
&|\prodh(u)|_{\ir-2}\leq C(\ir) |u|_{\ir-2} ,\\
&|\prodh(u)|_1\leq C|u|_2,
\end{split}
\end{equation}
where we used Lemma \ref{lem:FamilyofPossionEquationwithBernsteinandExtension20180315}.

We define $H_\lambda$ as the leafwise harmonic function on $\Foliation(F_\lambda)$, with boundary value $F_1-F_0$.
More precisely, $H_\lambda$ is the unique solution of:
\begin{align}
\ddbar H_\lambda\wedge(\Omega_0+\sqrt{-1}\ddbar\Phi_\lambda)^n=0, &\text{ in \  } D\times V;\label{LeafwizeHarmonicH_2017_Dec_23}\\
H_\lambda=F_1-F_0, 			\ \ \ \ \ \ \ \ \ 						       &\text{ on } \partial D\times V.\label{BoundaryCondition201805062039}
\end{align}
Then $H_\lambda$ satisfies:
\[H_\lambda=(\ADFlambda)_\ast\prodh\left[(\ADFlambda)^\ast(F_1-F_0)\right]
\equiv \bigg(\prodh[(F_1-F_0)\circ \ADFlambda]\bigg) \circ \ADFlambda^{-1}.\]
Applying Theorem A.8 of \cite{HormanderPhysicalGeodesy} to the expression above, we  have the estimate
\begin{align}
&|H_\lambda|_{\ir-2}\nonumber\\
\leq &C(\ir)\left(|\prodh[(\ADFlambda)^\ast(F_1-F_0)]|_1(1+|\ADFlambda^{-1}|_{\ir-2})\right.\nonumber\\
								  	&\ \ \ \ \ \ \ \left.+|\prodh[(\ADFlambda)^\ast(F_1-F_0)]|_{\ir-2}(1+|\ADFlambda^{-1}|_{1})^{\ir-2}\right)\nonumber\\
\leq &C(\ir)\left(|(\ADFlambda)^\ast(F_1-F_0)|_2(1+|F_\lambda|_{\ir})
								  	+|(\ADFlambda)^\ast(F_1-F_0)|_{\ir-2}(1+|F_\lambda|_{\fourthird})^{\ir-2}\right)\nonumber\\
\leq &C(\ir)\left(|F_0-F_1|_2(1+|\ADFlambda|_2)^2(1+|F_\lambda|_{\ir})+|F_0-F_1|_1(1+|\ADFlambda|_{\ir-2})(1+|F_\lambda|_{\fourthird})^{\ir-2}\right.\nonumber\\
         &\ \ \ \ \ \ \ \ \ \left.+|F_0-F_1|_{\ir-2}(1+|\ADFlambda|_1)^{\ir-2}(1+|F_\lambda|_{\fourthird})^{\ir-2}\right)\nonumber\\
\leq &C(\ir)(|F_0-F_1|_{\ir-2}+(1+|F_\lambda|_{\ir})|F_0-F_1|_2).\label{BetterEstimateofLinearizationofFamilyofSolution}
\end{align}

{\flushleft{\bf Step 2.}}
Now we compare $H_\lambda$ and $H_\nu$ for $\lambda,\nu\in [0,1]$. Taking difference of  (\ref{LeafwizeHarmonicH_2017_Dec_23})(\ref{BoundaryCondition201805062039}) and the corresponding expressions for $H_\nu$, we find that
 $H_\lambda-H_\nu$ satisfies:
\[
\left
\{
\begin{array}{cc}
\ddbar (H_\lambda-H_\nu)\wedge(\Omega_0+\sqrt{-1}\ddbar\Phi_\nu)^n\ \ \ \ \ \ \ \ \ \ \ \ \ \ \ \ \ \ \ \ \ \ \ \ \ \ \ \ \ \ \ \ \ \ \ \ \ \ \ \ \ \ \ \ \\
  \ \ \ \ \ \    +\ddbar H_\lambda\wedge\sum_{k=1}^n\left(\begin{matrix}n\\k\end{matrix}\right) [\sqrt{-1}\ddbar(\Phi_\lambda-\Phi_\nu)]^k(\Omega_0+\sqrt{-1}\ddbar\Phi_\nu)^{n-k}=0 ,&\text{ in }D\times V;\\
H_\lambda-H_\nu=0 ,														&\text{ on }\partial D\times V.
\end{array}
\right.
\]
Denote
\begin{equation}\label{LeafwizePossionRHS_2017_1223_1034}
B=\sum_{k=1}^n\left(\begin{matrix}n\\k\end{matrix}\right)
\frac
{\ddbar H_\lambda\wedge[\sqrt{-1}\ddbar(\Phi_\lambda-\Phi_\nu)]^k(\Omega_0+\sqrt{-1}\ddbar\Phi_\nu)^{n-k}}
{d\tau\wedge d\overline \tau\wedge(\Omega_0+\sqrt{-1}\ddbar\Phi_\nu)^n}.
\end{equation}
To estimate $H_\lambda-H_\nu$, we define operator $\possion$ as following.
Given $v\in C^{\ir-2}(\cD\times V)$, let  $\possion(v)(\cdot, z)$ be the unique solution of
\[
\left
\{
\begin{array}{cc}
\triangle_{\tau} \possion(v)=v,								&\text{ in }D\times V;\\
\possion(v)=0 ,														&\text{ on }\partial D\times V.
\end{array}
\right.
\]
Then $\possion(v)\in C^{\ir-2}(\cD\times V)$ and satisfies
\[|\possion(v)|_{\ir-2}\leq C(\ir)|v|_{\ir-2},\ \ \ \ |\possion(v)|_1\leq C|v|_2,\]
where we used Lemma \ref{lem:FamilyofPossionEquationwithBernsteinandExtension20180315}.

With operator $\possion$ we can express $H_\lambda-H_\nu$ as
\[H_\lambda-H_\nu=(\ADFnu)_\ast\possion[\left(\ADFnu\right)^\ast B].\]

Applying Theorem A.8 of \cite{HormanderPhysicalGeodesy} to the above expression we can get
an estimate similar to the one we have for  $H_\lambda$, i.e. (\ref{BetterEstimateofLinearizationofFamilyofSolution}). But  we do not need estimate that precise here, and the following simpler estimate is sufficient:
\begin{align}
	|H_\lambda-H_\nu|_{\ir-2}
\leq C(\ir, |F_0|_{\irseventhird}, |F_1|_\irseventhird)|\possion(\ADFlambda^\ast B)|_{\ir-2}
\leq C(\ir,|F_0|_\irseventhird, |F_1|_\irseventhird)| B|_{\ir-2}.
\end{align}
For $B$, applying Theorem A.7 of \cite{HormanderPhysicalGeodesy} gives:
\[|B|_{\ir-2}\leq C(\ir,|\Phi_\lambda|_\ir, |\Phi_\nu|_\ir, |H_\lambda|_\ir)|\Phi_\lambda-\Phi_\nu|_\ir. \]
From this inequality, using (\ref{DonaldsonControlofPotentialFunctionAug15}), (\ref{DonaldsonComparisonAug15}) and
\[|H_\lambda|_{\ir}\leq C(\ir)(|F_0|_{\irseventhird}, |F_1|_{\irseventhird}),\]
which follows from replacing $\ir$ by $\ir+2$ in (\ref{BetterEstimateofLinearizationofFamilyofSolution}),
we get
\[|B|_{\ir-2}\leq C(\ir,|F_0|_\irseventhird, |F_1|_\irseventhird)|F_\lambda-F_\nu|_\irseventhird\]
So we have
\[|H_\lambda-H_\nu|_{\ir-2}\leq C(\ir,|F_0|_\irseventhird, |F_1|_\irseventhird) |\lambda-\nu|.\]

{\flushleft{\bf Step 3.}}
Denote
\[Z:=\Phi_\lambda-\Phi_\nu-(\lambda-\nu)H_\nu.\]
Then $Z $ satisfies

\[
\left
\{
\begin{array}{cc}
\ddbar Z\wedge(\Omega_0+\sqrt{-1}\ddbar\Phi_\nu)^n\ \ \ \ \ \ \ \ \ \ \ \ \ \ \ \ \ \ \ \ \ \ \ \ \ \ \ \ \ \ \ \ \ \ \ \ \ \ \ \ \ \ \ \ \\
=\frac{\sqrt{-1}}{n+1}\sum_{k=2}^{n+1}\left(\begin{matrix}n+1\\k\end{matrix}\right) [\sqrt{-1}\ddbar(\Phi_\lambda-\Phi_\nu)]^k(\Omega_0+\sqrt{-1}\ddbar\Phi_\nu)^{n+1-k} ,&\text{ in }D\times V;\\
Z=0 ,														&\text{ on }\partial D\times V.
\end{array}
\right.
\]
Then from the  maximum estimate for Laplace equation on the leaves
 $\ADF(D\times \{z\})$, $z\in V$, we get
\[|Z|_0\leq C(|\Phi_\nu|_2)|\Phi_\lambda-\Phi_\nu|_2^2\leq C(|F_0|_\fivethird, |F_1|_\fivethird)|F_\lambda-F_\nu|_\fivethird^2
\leq C(|F_0|_\fivethird, |F_1|_\fivethird)|\lambda-\nu|^2,\]
which concludes the proof of Step 3.

Thus we proved the following:
\begin{lemma}\label{ContinuityLemma}
Given $0<\fractionSpace<1,$ $F_0, F_1\in C^{\irseventhird}$, $\ir\geq \fourthird$, $\ir \notin \EZ$, satisfying
\begin{equation}| F_0|_\fourthird,|F_1|_\fourthird\leq \min\{\delta_G(\irseventhird), \delta_G(\ir), \delta_G(\fivethird)\},\label{FinalFourthirdNormSmallnessRequirement}
\end{equation}
solutions $\Phi_0$, $\Phi_1$  to Problem \ref{prob:DirichletonD} with boundary values $F_0,$ $ F_1$ satisfy
\[|\Phi_0-\Phi_1|_{\ir-2}\leq C(\ir)(|F_0-F_1|_{\ir-2}+(1+|F_0|_\ir+|F_1|_\ir)|F_0-F_1|_2).\]
\end{lemma}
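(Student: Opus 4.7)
The plan is to improve the comparison estimate of Lemma \ref{lem:DonaldsonSemmesConstructionofPotentialFunction}, which is crude because it loses one order of regularity on the right-hand side, by an interpolation/integration argument. I would set $F_\lambda = \lambda F_1 + (1-\lambda)F_0$ for $\lambda \in [0,1]$, let $\Phi_\lambda$ be the corresponding solution of Problem \ref{prob:DirichletonD} (Lemma \ref{lem:DonaldsonSemmesConstructionofPotentialFunction} applies to every $\lambda$ under the smallness assumption (\ref{FinalFourthirdNormSmallnessRequirement})), and aim to represent $\Phi_1 - \Phi_0 = \int_0^1 H_\lambda\,d\lambda$ where $H_\lambda$ is essentially $d\Phi_\lambda/d\lambda$. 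The desired sharp bound then follows from a sharp bound on $|H_\lambda|_{\ir-2}$ uniform in $\lambda$, combined with $\lambda \mapsto H_\lambda$ being continuous in $C^{\ir-2}$.

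The key observation is that formally differentiating the HCMA equation $(\Omega_0+\sqrt{-1}\ddbar\Phi_\lambda)^{n+1}=0$ in $\lambda$ shows that $d\Phi_\lambda/d\lambda$ solves the leafwise Laplace equation on the foliation $\Foliation(F_\lambda)$ whose tangent planes are the kernels of $\Omega_0+\sqrt{-1}\ddbar\Phi_\lambda$. I would therefore \emph{define} $H_\lambda$ as the unique solution of
\[
\ddbar H_\lambda \wedge (\Omega_0 + \sqrt{-1}\ddbar \Phi_\lambda)^n = 0 \ \text{in } D \times V,\qquad H_\lambda = F_1 - F_0 \ \text{on } \partial D \times V.
\]
Using $\ADFlambda$ to straighten the foliation so that $H_\lambda \circ \ADFlambda$ is ordinary-harmonic in $\tau$ on each disc $D \times \{z\}$, and using the Poisson-type operator $\prodh$ defined by (\ref{DirichletWithParam}), I can write $H_\lambda = (\ADFlambda)_\ast \prodh[(\ADFlambda)^\ast(F_1-F_0)]$. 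Combining the H\"older estimates on $\ADFlambda$ from Lemma \ref{lem:GlobalStableExistenceofHolomorphicDiscFamilies} with the composition estimates of Theorem A.7 and A.8 of \cite{HormanderPhysicalGeodesy}, distributing derivatives so that the top-order $\ir-2$ norm lands either on $F_1-F_0$ (leaving low-order control on $\ADFlambda$) or on $\ADFlambda$ (leaving a low-order norm of $F_1-F_0$), would yield the sharp bound
\[
|H_\lambda|_{\ir-2} \leq C(\ir)\bigl(|F_0-F_1|_{\ir-2} + (1+|F_\lambda|_\ir)|F_0-F_1|_2\bigr).
\]

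For the continuity of $\lambda \mapsto H_\lambda$ in $C^{\ir-2}$, I would take the difference of the defining equations so that $H_\lambda - H_\nu$ satisfies a leafwise Poisson equation whose right-hand side involves $\ddbar(\Phi_\lambda - \Phi_\nu)$ coupled with $\ddbar H_\lambda$. I would then apply the leafwise solution operator $\possion$ and estimate the right-hand side in $C^{\ir-2}$ using the crude comparison from Lemma \ref{lem:DonaldsonSemmesConstructionofPotentialFunction} applied at index $\ir+3$; this is precisely why the hypothesis asks $F_j\in C^{\irseventhird}$. The outcome is $|H_\lambda-H_\nu|_{\ir-2} \leq C(\ir,|F_0|_\irseventhird,|F_1|_\irseventhird)|\lambda-\nu|$. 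Finally, to verify that $H_\lambda$ really is the $\lambda$-derivative (in the integral sense), I would set $Z := \Phi_\lambda - \Phi_\nu - (\lambda-\nu) H_\nu$ and expand $(\Omega_0+\sqrt{-1}\ddbar\Phi_\lambda)^{n+1}-(\Omega_0+\sqrt{-1}\ddbar\Phi_\nu)^{n+1}=0$. The first-order term in $\ddbar(\Phi_\lambda-\Phi_\nu)$ precisely matches the linearized equation satisfied by $H_\nu$, so it cancels, and only quadratic terms in $\ddbar(\Phi_\lambda-\Phi_\nu)$ remain. The leafwise maximum principle plus the crude Lipschitz bound $|\Phi_\lambda-\Phi_\nu|_2\le C|\lambda-\nu|$ then gives $|Z|_0\le C(|F_0|_\fivethird,|F_1|_\fivethird)|\lambda-\nu|^2$.

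Putting the three steps together, $\lambda \mapsto \Phi_\lambda$ is differentiable at every $\nu$ in $C^0$ with derivative $H_\nu$, and continuity of $\lambda \mapsto H_\lambda$ in $C^{\ir-2}$ upgrades this to $\Phi_1 - \Phi_0 = \int_0^1 H_\lambda\,d\lambda$ with the integral converging in $C^{\ir-2}$. Integrating the Step 1 bound yields the claimed inequality. The main obstacle I anticipate is Step 1, more specifically producing a \emph{Gagliardo--Nirenberg-type} rather than straightforward bound on $|H_\lambda|_{\ir-2}$: a naive application of the chain rule to $(\ADFlambda)_\ast \prodh[(\ADFlambda)^\ast(F_1-F_0)]$ would produce a term like $(1+|\ADFlambda|_{\ir-2})|F_1-F_0|_{\ir-2}$, which collapses the comparison back to the crude Lipschitz form. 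The careful version of Theorem A.8 of \cite{HormanderPhysicalGeodesy}, which splits compositions into one factor with the top norm and the other with only $C^1$ control, is what delivers the crucial $(1+|F_\lambda|_\ir)|F_0-F_1|_2$ term and hence the sharp estimate.
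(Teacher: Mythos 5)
Your proposal is correct and reproduces essentially the same argument as the paper: you define $H_\lambda$ via the leafwise Laplace equation with boundary value $F_1-F_0$, express it as $(\ADFlambda)_\ast\prodh[(\ADFlambda)^\ast(F_1-F_0)]$, obtain the sharp $C^{\ir-2}$ bound from the split composition estimate (Theorem A.8 of \cite{HormanderPhysicalGeodesy}), establish $C^{\ir-2}$-continuity of $\lambda\mapsto H_\lambda$ via $\possion$ and the crude Lipschitz bound at index $\ir+3$, verify $H_\nu$ is the $C^0$-derivative by the quadratic estimate on $Z$ via the leafwise maximum principle, and integrate. Your closing remark correctly identifies the key technical point that makes the estimate sharp rather than collapsing back to the crude comparison.
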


 Given $F_0, F_1\in  C^{\ir}(\partial D\times V)$,  satisfying
\[
| F_0|_\fourthird,|F_1|_\fourthird\leq \frac{1}{2}\min\{\delta_G(\irseventhird), \delta_G(\ir), \delta_G(\fivethird)\},
\]
we can approximate them by $\{F_0^N\}_{N\in \EZ}$ and $\{F_1^N\}_{N\in \EZ}$, such that $F_0^N, F_1^N\in C^{\irseventhird}(\partial D\times V)$, and
\[|F_0^N-F_0|_\ir\rightarrow0,\ \ \ |F_1^N-F_1|_\ir\rightarrow0.\]
Without loss of generality, we can assume
\[
| F^N_0|_\fourthird,|F^N_1|_\fourthird\leq \min\{\delta_G(\irseventhird), \delta_G(\ir), \delta_G(\fivethird)\},
\]
\[|F_0^N|_\ir\leq 2|F_0|_\ir,\ \ |F_1^N|_\ir\leq 2|F_1|_\ir.\]
Then we can apply Lemma \ref{ContinuityLemma} to $F_0^N$ and $F_1^N$. Let $\Phi_0^N$ and $\Phi_1^N$ be solutions to Problem \ref{prob:DirichletonD} with boundary values $F_0^N$ and $F_1^N$. We have for $i=0,1$:
\[|\Phi_i^N-\Phi_i^M|_{\ir-2}\leq C(\ir)(|F_i^N-F_i^M|_{\ir-2}+(1+|F_i|_\ir)|F_i^N-F_i^M|_2).\]
Since $\{F_0^N\}$ and $\{F_1^N\}$ are both Cauchy sequences in $C^\ir(\partial D\times V)$, we have $\{\Phi_0^N\}$ and $\{\Phi_1^N\}$ are Cauchy sequence in $C^{\ir-2}(\cD\times V)$.  Let  $\Phi_0, \Phi_1\in C^{\ir-2}(\cD\times V)$ be the limits of $\{\Phi_0^N\}$ and $\{\Phi_1^N\}$
respectively, i.e.
\[\Phi_0^N\rightarrow \Phi_0 ,\ \ \ \Phi_1^N\rightarrow \Phi_1,\]
as $N\rightarrow \infty$, with respect to $C^{\ir-2}$ norm on $\cD\times V$.
Then $\Phi_0$ and $\Phi_1$ are solutions to Problem \ref{prob:DirichletonD} with boundary values $F_0, F_1,$ respectively.
We have, using Lemma \ref{ContinuityLemma}:
\begin{align*}
		&|\Phi_0-\Phi_1|_{\ir-2}\\
\leq	&|\Phi_0-\Phi_0^N|_{\ir-2}+|\Phi_0^N-\Phi_1^N|_{\ir-2}+|\Phi_1^N-\Phi_1|_{\ir-2}\\
\leq 	& o(1)+ C(\ir)(|F_0^N-F_1^N|_{\ir-2}+(1+|F_0|_\ir+|F_1|_\ir)|F_0^N-F_1^N|_2)\\
\leq 	&o(1)+C(\ir)(|F_0-F_1|_{\ir-2}+(1+|F_0|_\ir+|F_1|_\ir)|F_0-F_1|_2).
\end{align*}
Estimate (\ref{CrLipPotentialDisc}) is proved.

Now Proposition \ref{prop:DonaldsonDiscDirichletProblem} is proved with $\delta_D(\ir)=\frac{1}{2}\min\{\delta_G(\irseventhird), \delta_G(\ir), \delta_G(\fivethird)\}$.

\section{Iteration}\label{Iteration}
In this section we prove our main theorem: Theorem \ref{thm:RegularityofNearConstantGeodesics}. As stated
 in Section \ref{Introduction},  we will need to find a solution to HCMA equation on $\sS\times V$,
 which does not depend on $\theta$. To do this, we repeatedly solve Dirichlet problem on $\trR\times V$,
 with Proposition \ref{prop:DonaldsonDiscDirichletProblem}, where $\trR$ is a long but finite strip in
 complex plane. Our iteration sequence will converge to a solution to HCMA equation, which does not
 depend on $\theta$, and so it is a geodesic in the space of K\"ahler potentials.

In Section \ref{IterationFramework}, we introduce the iteration framework.

In Section \ref{ApplyingMoserTheorem}, we provide the precise estimates for our construction and show
that it satisfies conditions required by Lemma \ref{lem:MoserTheorem}.

In Section \ref{Conclusion}, we reach conclusion.
\subsection{Iteration Framework}\label{IterationFramework}
As shown in Figure \ref{fig:ConstructionofLongStrip},
 we first construct a smooth curve
$\cC$ on complex plane $\{\tau=t+\sqrt{-1}\,\theta\}$, which is the
boundary of an open convex domain $\rR$ satisfying
$$
\{0< t < 1\}\cap \{|\theta|< 1\}\subset\rR\subset\{0< t < 1\}\cap \{|\theta|<{7\over 4}\}.\qquad
$$
 Then we stretch $\cC$ to $\tcC$, so  that
 $\tcC\cap\{\Theta+1\geq\theta\geq\Theta\}$ is the leftward translation
 of $\cC\cap\{2\geq \theta\geq 1\}$ by $\Theta-1$, and
 $\tcC\cap\{-\Theta\geq\theta\geq-\Theta-1\}$ is the rightward translation
 of $\cC\cap\{-1\geq \theta\geq -2\}$ by $\Theta-1$, and $\tcC$ coincides with the
 lines $\{t=0,1\}$ on $\{|\theta|\leq \Theta\}$, where $\Theta>4$ is a large constant to
 be determined.

\begin{figure}
\centering
\includegraphics[height=4.6cm]{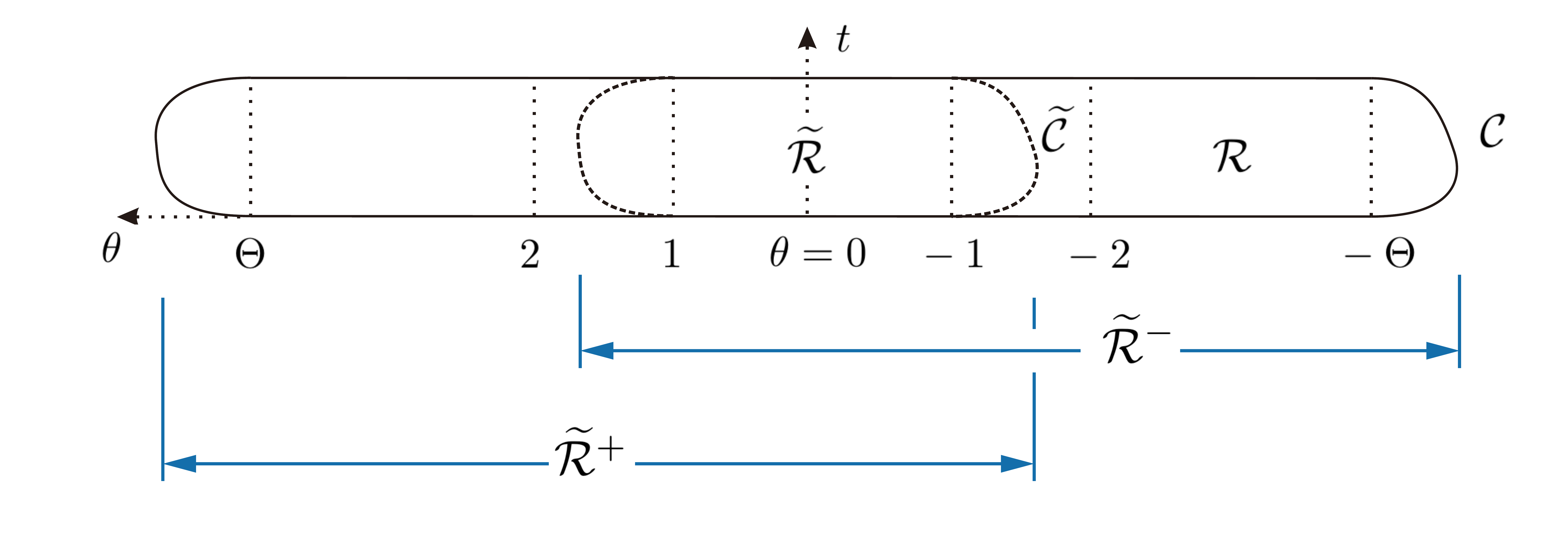}
\caption{Construction of Strips}
\label{fig:ConstructionofLongStrip}
\end{figure}

We denote by  $\trR$ the region surrounded by the curve $\tcC$, and
\[\Rectangle:=\{2\geq \theta\geq -2\}\cap \sS,\]
where $\sS$ is defined in Section \ref{NotationandConvention}. Then for any
$\varrho\geq0$, we define
\begin{equation}\RectFunc^\varrho=\{\phi\in C^\varrho(\Rectangle\times V)
\mid\phi|_{\{t=0,1\}\times V}=0 \},		
\label{DefinitionofRectFunc}
\end{equation}
\begin{equation}	\tmF^\varrho=\{\phi\in C^\varrho(\tcC\times V)\mid\phi|_{\{t=0,1\}\times V}=0 \}.	\label{DefinitionoftmF}
\end{equation}
When we don't need to specify $\varrho$, we will simply drop index $\varrho$.

For convenience, we define $\Str: \RectFunc^\varrho\rightarrow \tmF^\varrho$
by
\begin{equation}(\Str\phi)(\theta,t,z)=\left\{
\begin{array}{ccc}
\phi(\theta-\Theta+1, t,z),& \text{if} &\theta>\Theta,\\
0					,&\text{if}  &-\Theta\leq \theta\leq \Theta,\\
\phi(\theta+\Theta-1, t,z),& \text{if}& \theta<-\Theta.
\end{array}\right.																\label{DefinitionofStrSep28}
\end{equation}
\begin{remark}
We will define norms of functions on $\tcC\times V$
in such way that for any  $\phi\in \RectFunc^\varrho$ and $0\leq\nu\leq \varrho$,
\begin{equation}
|\Str\phi|_{\nu;\tcC\times V}\leq C |\phi|_{\nu; \Rectangle\times V},				
\label{ExtensionisVeryGood}
\end{equation}
for some $C$ independent of $\Theta$ and $\nu(\text{or }\varrho).$

To define these norms on $C^\varrho(\tcC\times V)$, we need to specify an open
covering of $\tcC\times V$ and partitions of unity subordinate to the covering.
Note that functions in $\tmF$ vanish on
$(\tcC\cap\{|\theta|\le\Theta-\half\})\times V$. Then, in order
to make (\ref{ExtensionisVeryGood}) valid, we need to make the open covering and
partition of unity of $\tcC\times V$ over
$(\tcC\cap\{|\theta|>\Theta-\half\})\times V$ independent of $\Theta$.
This can be done because by definition $\tcC\cap\{|\theta|>\Theta-\half\}$ is
the same as $\cC\cap\{|\theta|>\half\}$.
\end{remark}
For
 $(\varphi_0, \varphi_1)\in C^\varrho(V)\times C^\varrho(V)$, we denote
\begin{align*}
&\bvarphi=(\varphi_0,\varphi_1),\\
&\varphit=(1-t)\varphi_0+t\varphi_1, \quad\mbox{ for } t\in[0,1],
\end{align*}
and for any $q\geq 0$
\[
|\bvarphi|_q=|\varphi_0|_q+|\varphi_1|_q.
\]
We consider $\varphit$ introduced above as a function on $\sS\times V$,
 defined by
$$
(\theta, t, z) \mapsto (1-t)\varphi_0(z)+t\varphi_1(z).
$$
Then $\varphi_t\in C^\varrho(\sS\times V)$.

Now given $0<\fractionSpace<1,$ $\bvarphi\in C^\varrho(V)\times C^\varrho(V)$,
 $\phi\in \RectFunc^\varrho$, with $\varrho\geq \fourthird$,
 $|\bvarphi|_{\fourthird}$ and $|\phi|_{\fourthird}$ sufficiently small,
 according to Proposition  \ref{prop:DonaldsonDiscDirichletProblem},
 we can find $\Phi\in C^{\varrho-2}(\ctrR\times V)$ solving the  following problem,
\begin{equation}\label{DirichletProblemDefIterMap}
\left\{
\begin{array}{ccc}
(\Omega_0+\sqrt{-1}\ddbar\Phi)^{n+1}=0,& \text{in} &\trR\times V,\\
\Phi=\varphit+\Str(\phi) 				,&\text{on}& \tcC\times V.
\end{array}\right.
\end{equation}
We define $\mB_\bvarphi(\phi)$ by
\[\mB_\bvarphi(\phi)=\Phi|_{\Rectangle\times V}-\varphit.\]
This defines $\mB_\bvarphi$ as a map from a neighborhood of zero in
$\RectFunc^\varrho$ into $\RectFunc^{\varrho-2}$.

Further requirements on the size of $|\bvarphi|_{\fourthird}$ and
$|\phi|_{\fourthird}$, and the
regularity estimate of $\mB_\bvarphi(\phi)$ will be discussed more carefully in
Section \ref{ApplyingMoserTheorem}.

Given $\bvarphi\in C^\varrho(V)\times C^\varrho(V)$,
assume that $\phi\in\RectFunc^\varrho$ is a fixed point
of $\mB_\bvarphi$. Then the corresponding $\Phi$ solving
(\ref{DirichletProblemDefIterMap})
is a real-valued function on
$\ctrR\times V$, satisfying:
\[
\left\{
\begin{array}{cc}
(\Omega_0+\sqrt{-1}\ddbar\Phi)^{n+1}=0, &\text{in} \  \trR\times V,\\
\Phi|_{\{t=0\}\times V}=\varphi_0, &\Phi|_{\{t=1\}\times V}=\varphi_1,\\
\Str(\Phi-\varphi_t)+\varphi_t=\Phi|_{\tcC\times V}.
\end{array}
\right.
\]

 Denote
\begin{align*}
& \rR^-:=(\rR\cap\{2>\theta\geq1\})\cup(\trR\cap\{1>\theta\}),\\
&\rR^+:=(\trR\cap\{\theta>-1\})\cup(\rR\cap\{-1\geq\theta>-2\}).
\end{align*}
So, $\rR^-$ is a rightward translation of $\rR^+$ by  $\Theta-1$.
Noting that $\varphi_t$ does not depend on $\theta$, we conclude
that condition $\Str(\Phi-\varphi_t)+\varphi_t=\Phi|_{\tcC\times V}$ implies
that the restriction of $\Phi$ on $\partial \rR^+\times V$ is identical to the restriction
of $\Phi$ on $\partial \rR^-\times V$ under the translation in $\theta$-direction.
Combining  this with
the uniqueness  of Dirichlet Problem
(Corollary 7 of \cite{DonaldsonSymmetricSpace}), we conclude
that  the restriction of $\Phi$ on $\rR^+\times V$ is identical to the restriction of
$\Phi$ on $ \rR^-\times V$, under the same translation. So, $\Phi$ can be extended as a periodic function of $\theta$ on $\sS\times V$, with period $\Theta-1$.

Then $\Phi$  can be viewed as a function defined on the product of a cylinder and the K\"ahler manifold:
\[(\sS\slash \thicksim)\times V,  \text{ where $(t,\theta+\Theta-1,z)\thicksim (t, \theta, z)$}.\] And because $\Phi$ is independent of $\theta$ on $\{t=0,1\}$, for any $\Lambda\in \ER$, $\Phi(t,\theta+\Lambda, \ast)$ also satisfies the HCMA equation and has the same boundary values as $\Phi$. Using the uniqueness theorem (Corollary 7) of \cite{DonaldsonSymmetricSpace}, we can conclude that $\Phi(t,\theta+\Lambda, \ast)=\Phi(t,\theta,\ast)$, i.e.  $\Phi$ is independent of $\theta$, so it's a geodesic.

Thus it remains to show the existence of fixed points of
$\mB_\bvarphi$, if  $|\bvarphi|_{\fourthird}$ is sufficiently small.
The outline of our argument is following.
We will
apply a Moser-type inverse function theorem in the following setting. Define
\begin{align}
&\sF^\varrho=C^\varrho(V)\times C^\varrho(V)\times\RectFunc^\varrho,
\label{DefinitionofsF}
\\
&\sN^\varrho_\delta=\{(\vfcf)\in \sF^\varrho\mid |
\bvarphi|_\fourthird+|\phi|_\fourthird\leq \delta\},
\end{align}
and for some $\delta=\delta(\varrho)$ small enough, depending on $\varrho$,
\begin{equation}\label{defOfIterMap-P}
\begin{split}
&\sP:\sN^\varrho_{\delta} \rightarrow \sF^{\varrho-2}
\mbox{,\ \ \ \ \ \ \  by}\\
&\sP(\vfcf)=(\bvarphi,\mB_\bvarphi(\phi)-\phi).
\end{split}
\end{equation}
Note that $\sP(0,0)=0$. Then if a neighborhood, in some norm,
of zero in $\sF^{\varrho-2}$ is contained in $\sP(\sN^\varrho_{\delta})$, in particular
we get
\[\{(\bvarphi,0)\mid\bvarphi\text{ small enough in some norm}\}
\subset \sP(\sN^\varrho_{\delta}),\]
and we obtain a fixed point of $\mB_\bvarphi$, for $\bvarphi$ small with
respect to some norm.
This implies, ignoring regularity, that if
$(\varphi_0,\varphi_1,0)=\sP(\varphi_0,\varphi_1,\phi)$,
then we can connect $\varphi_0$ and $\varphi_1$ by the curve $\varphit+\phi$, $t\in[0,1]$
 in the space of K\"ahler potentials. This construction will be discussed
 more precisely  in the following sections.

\subsection{Applying Moser's Theorem}\label{ApplyingMoserTheorem}

Now we show how to apply Lemma \ref{lem:MoserTheorem} to above construction. Our argument will be in 4 steps.

{\flushleft{\bf Step 1}.  {\em Specifying Indexes and Function Spaces}}

Let $k$ and $J$ be the numbers in Theorem \ref{thm:RegularityofNearConstantGeodesics}.
We will use Proposition \ref{prop:DonaldsonDiscDirichletProblem} with $$\fractionSpace=\min\{{1\over 3}, {k-4\over 3}\}.$$

Then in Lemma \ref{lem:MoserTheorem}, let
\[ B=k,\ \alpha={\jop}, \ b=4+\fractionSpace,\  l=2,\ \chi=4,\]
and
\[r=\zeta+{1\over 3},\] for some $\zeta\in \EZ^+$ big enough, such that
\begin{equation}r>k, \label{r>k}
\end{equation}
\begin{equation}
\frac{k}{4}>\frac{(1+{4\over r})^3(r-k)}{r-8},   \label{Pluggedink>chi}
\end{equation}
\begin{equation}
\frac{k}{r-k}<1+{4\over r},								\label{PluggedinIntervalMn}
\end{equation}
\begin{equation}
\frac{k(r-(\fourthird))}{(\fourthird)(r-k)}>(1+{4\over r})^2,
\end{equation}
\begin{equation}
\frac{r^3(r-k+{2+\jop})}{(r+4)^3(r-k)}>\frac{k-\jop}{k}.  \label{PluggedinConvergewrtBalpha}
\end{equation}
Note that (\ref{r>k})-(\ref{PluggedinConvergewrtBalpha}) can be satisfied by
simply letting $\zeta\rightarrow \infty$.

Requirements
(\ref{r>k})-(\ref{PluggedinConvergewrtBalpha}) imply (\ref{IndexOrderMoserTheorem})-(\ref{ConvergeWRTBalphaNorm})
for $B,\ b,\ l,\ \chi,\ \alpha$ specified above: in fact, (\ref{r>k})-(\ref{PluggedinConvergewrtBalpha})
 come from plugging these values of $B,\ b,\ l,\ \chi,\ \alpha$ into
 (\ref{IndexOrderMoserTheorem})-(\ref{ConvergeWRTBalphaNorm}), respectively.

Note that $k>4$ and $\jop<\min\{{1\over 4}, \frac{k-4}{4}\}$ by the conditions of
Theorem \ref{thm:RegularityofNearConstantGeodesics}. Then we have
\[k-\alpha=k-\jop>k-\min\{{1\over 4}, \frac{k-4}{4}\}>4+\min\{{1\over 3}, \frac{k-4}{3}\}=b.\]

From now on, we will fix $r$, and when there is no ambiguity, constants only depending on
$r,\ B,\ b,\ l,\ \chi,\ \alpha$ and manifold $(V,\omega_0)$, will be denoted by $C$.
In other cases, e.g. when we refer to estimates in Section {\ref{DirichletProblem}}, we will use
notation,
like $\delta_D(r)$,  for clarity. Also, before we fix $\Theta$ in Step 4, constants depending on $\Theta$ will  be denoted by $\delta(\Theta)$ or $C(\Theta)$ etc..

We will work in function space $\sF^{\varrho}=C^{\varrho}(V)\times C^{\varrho}(V)\times
\RectFunc^\varrho$, for some $\varrho$, where we used the space $\RectFunc^\varrho$ defined in (\ref{DefinitionofRectFunc}).

For $(\varphi_0,\varphi_1,\phi)\in C^{\varrho}(V)\times C^{\varrho}(V)\times \RectFunc^\varrho$,
with $\varrho\geq 0$ and $N\geq 1$, we define
\[S_N(\varphi_0,\varphi_1,\phi)=(\NashSmoothingOperator_N\varphi_0,\NashSmoothingOperator_N\varphi_1,\tNashSmoothingOperator_N\phi),\]
where $\NashSmoothingOperator_N$ is the smoothing operator constructed in
\cite[Part A]{NashImbedding}, and  $\tNashSmoothingOperator_N$ is a modification of $\NashSmoothingOperator_N$ which we define in the following.

First we extend $\phi$ to be a $C^\varrho$ function in $\{-3<\theta<3\}\times\{-1<t<2\}\times V$, with compact support, and we denote this new function by $E(\phi)$. By using Whitney's extension theorem we can make
\[
|E(\phi)|_\varrho\leq C(\varrho)|\phi|_\varrho
,\]
and
\[
E(\phi)=\phi   \text{, in } \{-2<\theta<2\}\times\{0<t<1\}\times V
.\]
Then we mollify $E(\phi)$ with smoothing operator $\NashSmoothingOperator_N$, and define
\[\renewcommand\arraystretch{1.5}
\tNashSmoothingOperator_N(\phi)(\theta,t,z)=
				\left\{
						\begin{matrix}
								(\NashSmoothingOperator_NE(\phi))(\theta,t,z)-\eta(t)\cdot(\NashSmoothingOperator_NE(\phi))(\theta,0,z),	\ \ 	&\text{ for } t\leq {1\over 2};\\
								(\NashSmoothingOperator_NE(\phi))(\theta,t,z)-\eta(1-t)\cdot(\NashSmoothingOperator_NE(\phi))(\theta,1,z),&\text{ for } t\geq {1\over 2},
						\end{matrix}
				\right.
\]
where $\eta$ is a function in $C^\infty(\ER, \ER)$, satisfying
\[0\leq \eta\leq 1,\]and
\[\renewcommand\arraystretch{1.3}
\eta(t)=
				\left\{
						\begin{matrix}
								1,	\ \ 	&\text{ for } |t|\leq {1\over 6};\\
								0,	\ \ 	&\text{ for } |t|\geq {1\over 3}.
						\end{matrix}
				\right.
\]
We note that $\tNashSmoothingOperator_N(\phi)=0$ on $\{t=0\}$ and $\{t=1\}$, so it maps $\RectFunc^\varrho$ into $\RectFunc^\infty$,
which is the reason for this modification.

It is easy to check that $S_N$ satisfies conditions (\ref{MoserAbstractSettingBlowUpofSmoothing})
and (\ref{MoserAbstractSettingConvergenceofSmoothing}), just note that,
\[
|\NashSmoothingOperator_N E(\phi)(\spacecdot,t_0, \spacecdot)|_\nu\leq C(\varrho)\cdot N^{\nu-\varrho}|\phi|_\varrho, \text{ for } t_0=0,1, \text{ and any }0\leq\nu<\varrho,
\] because $E(\phi)=0$ on $\{t=0,1\}$,
 the remaining argument is similar to the proof in \cite[Part A]{NashImbedding}.

Mapping $\sP$  in (\ref{defOfIterMap-P}) will be defined  in a neighborhood of zero in  $\sF^\varrho$:
\[\sN^{\varrho}_{\delta(\varrho)}=\{(\vfcf)\in \sF^{\varrho}\mid \vffnormfourthird \leq \delta\},\]
for $\delta$ small, and $\varrho=r+2, 4+\fractionSpace$.\\

 In Steps 2 and 3, we
show that for any $\Theta>2$
there is a $\delta_1(\Theta)$ such that
 $\sP$ is well-defined
and differentiable in $\sN_{\delta_1(\Theta)}^{\varrho}$. In Step 4 we find that, if we  choose $\Theta$ big enough, then there is a
$\tilde\delta_2(\Theta)>0$ such that for $(\vfcf)\in\sN^{r+2}_{\min\{\tilde\delta_2(\Theta),\delta_1(\Theta)\}}$
the linear map $D\sP_{\vfcf}$ is invertible. We then fix $\Theta$, and let the $\epsilon$ in
Lemma \ref{lem:MoserTheorem} be equal to
  $$\delta_2=\min\{\tilde\delta_2(\Theta),\delta_1(\Theta)\}.$$

{\flushleft {\bf Step 2.} {\em Iteration Map}}

$\trR$ is holomorphically equivalent to the disc $D$ by Riemann Mapping Theorem, and the biholomorphic map from $\trR$ to $D$ is smooth up to boundary by Theorem 1 of \cite{Kerzman}. Here $D$ is the unit disc on complex plane, with center at zero.

Let $\tT$ be the unique holomorphic map from $\trR$ to $D$ satisfying:
\[\tT(0)=-\sqrt{-1}, \ \ \ \tT(\frac{1}{2})=0.\]
Then Theorem 1 of \cite{Kerzman} implies for any $\varrho>0$, there exists a $C(\varrho,\Theta)$, such that
\begin{equation}
|\tT|_{\varrho;\overline{\trR}}\leq C(\varrho,\Theta).								\label{RegularityoftT}
\end{equation}

Then $\trR\times V$ is holomorphically equivalent to $D\times V$ through $\TtrR$, where
\[\TtrR(\tau,z)=(\tT(\tau),z).\]
Following (\ref{RegularityoftT}), we have
\begin{equation}
|\TtrR|_{\varrho;\overline{\trR}\times V}\leq C\cdot C(\varrho,\Theta),  \ \ \forall \varrho>0.   \label{RegularityofTtrR}
\end{equation}

Since $\trR\times V$ and $D\times V$ are holomorphically equivalent to each
other through a biholomorphic map smooth up to boundary,
 Problem \ref{prob:DirichletonD} is  equivalent to the following  problem:
\begin{problem}
[Dirichlet Problem of HCMA equation on $\trR\times V$ with boundary value $M$]
								\label{prob:DirichletontrR}
Given
a K\"ahler manifold
$(V,\omega_0)$, with $\omega_0>0$, and  a smooth real valued function $M$ on
 $\partial \trR\times V$ satisfying
\[
\omega_0+\sqrt{-1}M(\tau, \cdot)_{i\overline j}dz^i\wedge\overline{dz^j}>0,
\ \ \ \ \ \ \ \ \ \  \text{on } \{\tau\}\times V
\;\text{ for all }\;\tau \in \partial \trR,
\]
and denoting by
 $\pi_V$ the trivial projection from $\trR\times V$ to $V$, and setting
 $\Omega_0=\pi_V^\ast(\omega_0)$, we look for $\Phi\in C^2(\trR\times V\torange \ER)\cap C^0(\ctrR\times V\torange \ER)$, satisfying
\[\left\{
\begin{array}{cc}
(\Omega_0+\sqrt{-1}\ddbar\Phi)^{n+1}=0,&\text{in }\trR\times V;\\
\Phi=M		,						&\text{on } \partial \trR\times V;\\
\omega_0+\sqrt{-1}\Phi(\tau, \cdot)_{i\overline j}dz^i\wedge\overline{dz^j}>0, & \text{on } \{\tau\}\times V, \forall\tau \in \trR.
\end{array}
\right.\]
\end{problem}

Given $(\vfcf)\in\sF^{r+2}$, suppose  $\Psi$ is a solution to
Problem \ref{prob:DirichletontrR} with boundary data
$M=\varphit+\Str(\phi)$.
Then $\TtrR_\ast(\Psi)$ is a
solution to Problem \ref{prob:DirichletonD} with boundary data $\TtrR_\ast(\varphit+\Str(\phi))$.

By Theorem A.8 of \cite{HormanderPhysicalGeodesy},
\[|\TtrR_\ast(\varphit+\Str(\phi))|_{\fourthird}\leq
 C(\Theta)(|\varphit+\Str(\phi)|_\fourthird).\]
Then by Proposition \ref{prop:DonaldsonDiscDirichletProblem}, if
\[\vffnormfourthird\leq \frac{\delta_D(r+2)}{C(\Theta)}\triangleq \delta_1(\Theta),
\]
there exists a solution $\Phi$ to Problem \ref{prob:DirichletonD}
with boundary data $\TtrR_\ast(\varphit+\Str(\phi))$.
It follows that $\TtrR^{\ast}(\Phi)$ is
a solution to Problem \ref{prob:DirichletontrR}, with boundary data $\varphit+\Str(\phi)$.

For convenience,
\begin{equation}\label{notationSolProbl-2}
\text{\em
solution to Problem \ref{prob:DirichletontrR} with boundary
data $\varphit+\Str(\phi)$ will be denoted by $\DtrR(\vfcf)$.}
\end{equation}
Then for $(\vfcf)\in \sN_{\delta_1(\Theta)}^{r+2}$ we have
$\DtrR(\vfcf)\in C^r(\ctrR\times V)$ by Proposition \ref{prop:DonaldsonDiscDirichletProblem}, and
\begin{equation}\label{regularitySolProbl-2}
|\DtrR(\vfcf)|_r\leq C(\Theta)(|\bvarphi|_{r+2}+|\phi|_{r+2}).
\end{equation}

As suggested in last subsection (\ref{defOfIterMap-P}), we define
\begin{equation}\label{IterMapBdef}
\begin{split}
&\mB_\bvarphi(\phi)=\DtrR(\vfcf)|_{\Rectangle\times V}-\varphit,\\
&\sP(\vfcf)=(\bvarphi,\mB_{\bvarphi}(\phi)-\phi).
\end{split}
\end{equation}

Let $(\vfcf), (\hat\bvarphi,\hat\phi)\in\sN^{r+2}_{\delta_1(\Theta)}$ be given.
Denote $\Psi:=\DtrR(\vfcf)$, $\hat\Psi:=\DtrR(\hat\bvarphi,\hat\phi)$,
where we use (\ref{notationSolProbl-2}).
Using estimate (\ref{CrLipPotentialDisc}) of Proposition {\ref{prop:DonaldsonDiscDirichletProblem}},
we obtain on $D\times V$
\begin{align*}|\TtrR_\ast(\Psi-\hat\Psi)|_r&\leq C(|\TtrR_\ast(\varphit-\hat\varphit+\Str(\phi-\hat\phi))|_{r+2}\nonumber\\
&+(1+|\TtrR_\ast(\varphit+\Str(\phi))|_{r+2}+|\TtrR_\ast(\hat\varphit+\Str(\hat\phi))|_{r+2})(|\TtrR_\ast(\varphit-\hat\varphit+\Str(\phi-\hat\phi))|_{\fourthird})
\end{align*}
so we have for  $\Psi-\hat\Psi$  on $\trR\times V$
\begin{align}|\Psi-\hat\Psi|_r\leq C(\Theta)&
\left(|(\hat\bvarphi-\bvarphi,\hat\phi-\phi)|_{r+2}\right.\nonumber\\
&   \ \ \left.+
(1+|(\hat\bvarphi,\hat\phi)|_{r+2}+|\vfcf|_{r+2})
(|(\hat\bvarphi-\bvarphi,\hat\phi-\phi)|_{\fourthird})
\right), 		\label{RefinedLipontrR}
\end{align}
where the norms in the right-hand side are taken on the domain
$\Rectangle\times V$ or $V$.
Then restricting $\Psi-\hat\Psi$ to $\Rectangle\times V$, we have
\begin{align}|\sP(\vfcf)-\sP(\hat\bvarphi,\hat\phi)|_r\leq C(\Theta)&((|\hat\bvarphi-\bvarphi,\hat\phi-\phi|_{r+2})\nonumber\\
&+
(1+|\hat\bvarphi,\hat\phi|_{r+2}+|\vfcf|_{r+2})
(|\hat\bvarphi-\bvarphi,\hat\phi-\phi|_{\fourthird})
).		\label{CrLipofsP}
\end{align}
Also, we obviously have
\[\sP(0,0)=0.\]
Thus Condition 1 of Lemma \ref{lem:MoserTheorem} holds.  Estimate
(\ref{CrLipofsP}) is stronger than (\ref{CrEstimateofsPMoserTheorem}), but we
will need to use (\ref{CrLipofsP}) later in this section.

Condition 2 also holds: indeed to check (\ref{LipEstimateofsPMoserTheorem}),
we note that a stronger estimate:
\begin{equation}\label{maxPrincipleIteration22}
|\sP(f_1)-\sP(f_2)|_0\leq C|f_1-f_2|_0
\end{equation}
 can be proved with maximum principle (Lemma 6 of \cite{DonaldsonSymmetricSpace}),
by the argument similar to the proof of Corollary 7 of \cite{DonaldsonSymmetricSpace}.
Precisely, let $\Phi$ and $\tilde\Phi$ be two solutions of Problem \ref{prob:DirichletontrR}
with Dirichlet data $M$ and $\tilde M$ respectively. Then taking
$\hat\Omega=\Omega_0+\sqrt{-1}\ddbar\Phi$,
we have $\hat\Omega+\sqrt{-1}\ddbar(\tilde\Phi-\Phi)
=\Omega_0+\sqrt{-1}\ddbar\tilde\Phi>0$ on every slice
$\{\tau\}\times V$, $\tau\in \trR$ we deduce from
Lemma 6 of \cite{DonaldsonSymmetricSpace} that maximum value of
$\Phi-\tilde\Phi$ is attained on the boundary, i.e.
$$
|\Phi-\tilde\Phi|_0\le |M-\tilde M|_0.
$$
This implies (\ref{maxPrincipleIteration22}).

{\flushleft{{\bf Step 3.} {\em  Differentiability}}}

To describe the structure of the tangential map of $\sP$, we need to use two operators, $\AtrR$ and $\HtrR$, which we define in the following.

{\flushleft{\em 1. Definition and Estimate of $\AtrR$}}

As we discussed in Section \ref{DirichletProblem}, if $\Phi\in C^3{(D\times V)}$ satisfies
\[
\left\{\begin{array}{cc}
\left(\Omega_0+\sqrt{-1}\ddbar\Phi\right)^{n+1}=0, &\text{ in } D\times V;\\
\omega_0+\sqrt{-1} \Phi_{i\overline j}(\tau,\cdot)dz^i\wedge\overline{dz^j}>0,&\text{ on }\{\tau\}\times V, \ \forall\ \tau\in D,
\end{array}\right.			
\]
then kernels of $\Omega_0+\sqrt{-1}\ddbar \Phi$ form a foliation on $D\times V$ and we can find a map $\ADF$, satisfying properties (\ref{EquationofADFAug13}), which maps the trivial production foliation to this foliation.
We have same result for $\trR\times V$. Precisely, given $(\vfcf)\in \sN^{r+2}_{\delta_1(\Theta)}$,
kernels of $\Omega_0+\sqrt{-1}\ddbar\DtrR(\vfcf)$ form a foliation on $\trR\times V$ which we denote by $\Foliation(\vfcf).$ As in the case of $D\times V$, we have a map $\AtrR\in C^r(\ctrR\times V\torange\ctrR\times V)$, which satisfies
\begin{equation*}
\left\{
\begin{array}{l}
\pi_{\trR}\circ \AtrR=\pi_{\trR};\\
\vspace{1ex}
\AtrR=Id ,  \text{\ \ \ \ \ \  on } \{\tau=0\}\times V;\\
\vspace{1ex}
\text{for each }  z\in V,
\text{ restriction of }   \Omega_0+\sqrt{-1}\ddbar\DtrR(\vfcf)   \text{  to }
\AtrR(\trR\times \{z\})
 \text{ vanishes};\\
\text{for each }  z\in V,
\text{ restriction of }    \AtrR   \text{  to }  \trR\times \{z\}
 \text{  is holomorphic.}
 \end{array}
\right.
\end{equation*}
where $\pi_{\trR}$ is the trivial projection from $\trR\times V$ to $\trR$.
We denote this map $\AtrR$ by $\AtrR_{\vfcf}$.

Recall that in Section \ref{DirichletProblem},  existence, regularity and estimates
of the map $\AD_F$ with properties
(\ref{EquationofADFAug13}) were shown in
Proposition \ref{prop:DonaldsonDiscDirichletProblem},
for $F\in C^{\varrho+2}(\partial D\times V)$, $|F|_\fourthird\leq \delta_D(\varrho+2)$,
provided $\varrho+2\geq \fourthird$. Also, it's easy to see
\[\AtrR_\vfcf=\TtrR^{-1}\circ\AD_{\TtrR_{\ast}(\varphit+\Str(\phi))}\circ\TtrR.\]
Now, from (\ref{CrEstimateofADFDisc})--(\ref{CrEstimateofADFiDisc}),
using Theorem A.8 of \cite{HormanderPhysicalGeodesy}, 
we get, for any $r\geq\varrho\ge 2+X$,
\begin{equation}|\AtrR_\vfcf-Id|_\varrho=\left|\left(\TtrR^{-1}\circ\AD_{\TtrR_{\ast}(\varphit+\Str(\phi))}-\TtrR^{-1}\circ Id\right)\circ\TtrR\right|_{\varrho}\leq C(\Theta)(\vffnormvarrho).
\label{EstimateofAtrRIdr}
\end{equation}
Similarly we have:
\begin{equation}|\AtrR^{-1}_\vfcf-Id|_\varrho=\left|\left(\TtrR^{-1}\circ\AD^{-1}_{\TtrR_{\ast}(\varphit+\Str(\phi))}-\TtrR^{-1}\circ Id\right)\circ\TtrR\right|_{\varrho}\leq C(\Theta)(\vffnormvarrho).
\label{EstimateofAtrRiIdr}
\end{equation}

{\flushleft{\em 2. Definition and Estimate of $\HtrR$}}

Given $F\in C^{r}(\tcC\times V),$ we can solve the following Dirichlet Problem:
\[
\left\{
\begin{array}{cc}
\triangle_\tau H=0 , & \text{ in }\trR\times V;\\
H=F,&   \text{ on }\tcC\times V,
\end{array}
\right.
\]
and get $H\in C^{r}(\ctrR\times V)$. We denote,
\[H=\prodh(F).\]
Then $\prodh$ is a linear map from $C^r(\tcC\times V)$ to $C^r(\ctrR\times V)$, with
\begin{equation}\label{EstDirichletWithParamInStrip}
|\prodh(F)|_r\leq C(\Theta)|F|_r,
\end{equation}
where this estimate is obtained similarly to the estimate
(\ref{EstDirichletWithParam}) for problem
(\ref{DirichletWithParam}), using Lemma \ref{lem:FamilyofPossionEquationwithBernsteinandExtension20180315}.
Note that in (\ref{EstDirichletWithParamInStrip}),
 constant $C(\Theta)$ may depend on $\Theta$, and may go to $\infty$ as $\Theta$ goes to $\infty$. But if $F$ vanishes on $\{t=0,1\}\times V$ and we restrict $\prodh(F)$ to $\Rectangle\times V$, we have better estimate, which is the main result of Appendix \ref{FamilyofDirichletProblemofHarmonicFunctiononLongStrip}.

Given $(\vfcf)\in \sN^{r+2}_{\delta_1(\Theta)}$, and $F\in C^r(\tcC\times V)$, define
\begin{equation}
\HtrR_{\vfcf}(F)={(\AtrR_{\vfcf})}_\ast(\prodh(\AtrR_\vfcf^\ast(F))),
\label{ExpressionofHtrRAug28}
\end{equation}
which means that $\HtrR_{\vfcf}(F)$ is leafwise harmonic function on the foliation
$\Foliation(\vfcf)$:
\begin{equation}
\left(\Omega_0+\sqrt{-1}\ddbar\DtrR(\vfcf)\right)^n\wedge\ddbar\HtrR_{\vfcf}(F)=0, \ \ \ \ \text{ in } \trR\times V,   		\label{LeafwiseHarmonicEquationofHtrR}
\end{equation}
and
\begin{equation}\HtrR_{\vfcf}(F)=F,\text{ on } \partial \trR\times V.
\label{HtrRBoundaryCondition}
\end{equation}
Also, from (\ref{EstimateofAtrRIdr})--(\ref{EstDirichletWithParamInStrip}), we have
 $\HtrR_\vfcf(F)\in C^r(\ctrR\times V)$.

We expect that  the tangential map (in the sense of Condition 3 of Lemma \ref{lem:MoserTheorem}) of $\sP$ at $(\vfcf)\in \sF^{r+2}$ is
\begin{equation}
\label{DerivativeIterMap}
D\sP_{\vfcf}: (u_0, u_1,v)\rightarrow(u_0, u_1,\HtrR_{\vfcf}((1-t)u_0+tu_1+
\Str(v))|_{\Rectangle\times V}-(1-t)u_0-tu_1-v),
\end{equation}
for $(u_0, u_1,v)\in \sF^{r+2}$. We check the validity in the following.

We continue to use notations introduced in (\ref{notationSolProbl-2}) of Step 2:
for $(\vfcf), (\hat\bvarphi,\hat\phi)\in\sN_{\delta_1(\Theta)}^{r+2}$ denote
\[ \Psi:=\DtrR(\vfcf),\ \ \ \ \hat\Psi:=\DtrR(\hat\bvarphi,\hat\phi).\]

To get second order estimate (\ref{SecondOrderEstimateMoserTheorem}), we
subtract the expected linear part from the difference of $\sP(\vfcf)$ and $\sP(\hat\bvarphi,\hat\phi)$:
\begin{align*}
	&\sP(\hat\bvarphi,\hat\phi)-\sP(\vfcf)-(\hat\bvarphi-\bvarphi,\HtrR_{\vfcf}((\hat\varphit-\varphit)+\Str(\hat\phi-\phi))|_{\Rectangle\times V}-\hat\phi+\phi-\hat\varphit+\varphit)\\
=	&(0, \mB_{\hat\bvarphi}(\hat\phi)+\hat\varphit-\mB_\bvarphi(\phi)-\varphit-\HtrR_{\vfcf}((\hat\varphit-\varphit)+\Str(\hat\phi-\phi))|_{\Rectangle\times V})\\
=	&(0, \underbrace{\left(\hat\Psi-\Psi-\HtrR_{\vfcf}((\hat\varphit-\varphit)+\Str(\hat\phi-\phi))\right)}_{\triangleq Z}\Big|_{\Rectangle\times V}),
\end{align*}
where we used (\ref{IterMapBdef}).
Now we need to estimate $L^{\infty}$ norm of $Z$.

Taking difference of the equations and boundary conditions  satisfied by
$\hat\Psi$ and $\Psi$, as given in Problem \ref{prob:DirichletontrR},  we get
\[
\left\{
\begin{array}{cc}
{\sqrt{-1}\ddbar(\hat\Psi-\Psi)\wedge(\Omega_0+\sqrt{-1}\ddbar\Psi)^n}\cdot (n+1) \ \ \  \ \ \ \ \ \ \ \ \ \  \ \ \ \ \ \ \ \ \  \ \ \ \ \ \ \ \ \ \ \ \ \ \ \  \ \ \ \ \ \ \\
 \ \ \ \ \ \ \ \ \  \ \ \ \ \ \ +\sum_{i=2}^{n+1}\left(\begin{matrix}n+1\\i\end{matrix}\right)(\sqrt{-1}\ddbar(\hat\Psi-\Psi))^i\wedge(\Omega_0+\sqrt{-1}\ddbar\Psi)^{n+1-i}=0,&\text{ in }\trR\times V;\\
\hat\Psi-\Psi=\hat\varphit-\varphit+\Str(\hat\phi-\phi),&\text{ on }{\tcC\times V},
\end{array}
\right.
\]
then taking difference of
(\ref{LeafwiseHarmonicEquationofHtrR})--(\ref{HtrRBoundaryCondition})
with
$F=\hat \varphi_t-\varphi_t+\Str(\hat\phi-\phi)$
and the above equations, we find $Z$ satisfies
\[
\left\{
\begin{array}{cc}
{\sqrt{-1}\ddbar Z\wedge(\Omega_0+\sqrt{-1}\ddbar\Psi)^n}\cdot (n+1)  \ \ \  \ \ \ \ \ \ \ \ \ \  \ \ \ \ \ \ \ \ \  \ \ \ \ \ \ \ \ \ \ \ \ \ \ \  \ \ \ \ \ \ \\
 \ \ \ \ \ \ \ \ \  \ \ \ \ \ \ +\sum_{i=2}^{n+1}
 \left(\begin{matrix}n+1\\i\end{matrix}\right)(\sqrt{-1}\ddbar(\hat\Psi-\Psi))^i
 \wedge(\Omega_0+\sqrt{-1}\ddbar\Psi)^{n+1-i}=0,&\text{ in }\trR\times V;\\
Z=0,&\text{ on }{\tcC\times V}.
\end{array}
\right.
\]
From the maximum estimate for Laplace equation on the leaves
$\AtrR(\trR\times \{z\})$, $z\in V$, we have
\begin{align}
|Z|_0\leq C|\hat\Psi-\Psi|_2^2		.				\label{EstimateofZbeforInterpolation}
\end{align}
We can use interpolation to estimate the right hand side of above inequality
to get
\begin{equation}
|Z|_0\leq	C|\hat\Psi-\Psi|_2^2\leq C(\Theta)|\hat\Psi-\Psi|_0^{2-{4\over r}}|\hat\Psi-\Psi|_r^{4\over r}\label{EstimateofZafterInterpolation}
\end{equation}
Then applying maximum principle Lemma 6 of \cite{DonaldsonSymmetricSpace}
 as in the proof of (\ref{maxPrincipleIteration22}), we have the estimate
\begin{equation}
|\hat\Psi-\Psi|_0\leq |\hat\bvarphi-\bvarphi|_0+|\hat\phi-\phi|_0.
\end{equation}

Now we combine  (\ref{RefinedLipontrR}) and the last three inequalities, to obtain
\begin{align}
|Z|_0	\leq& C(\Theta)\left(|\hat\bvarphi-\bvarphi|_0+|\hat\phi-\phi|_0^{}\right)^{2-{4\over r}}\nonumber\\
   \cdot&\left((|\hat\bvarphi-\bvarphi|_{r+2}+|\hat\phi-\phi|_{r+2})+
(1+|\hat\bvarphi,\hat\phi|_{r+2}+|\vfcf|_{r+2})
(|\hat\bvarphi-\bvarphi,\hat\phi-\phi|_{\fourthird})
\right)^{{4\over r}}.
						\label{EstimateofZafterUsedLip}
\end{align}

This confirms (\ref{SecondOrderEstimateMoserTheorem}), and thus
 Condition 3 of Lemma \ref{lem:MoserTheorem}.

{\flushleft{{\bf Step 4.} {\em  Invertibility}}.}    

Here, we check Condition 4 of Lemma \ref{lem:MoserTheorem}.

Recall that we set $l=2$ in our present application of Lemma \ref{lem:MoserTheorem}.
Thus in this step, $(\vfcf)$ will be in $\sN_{\delta_1(\Theta)}^{r+2}$, and so
$\AtrR_{\vfcf}$ is in $ C^r(\ctrR\times V\torange \ctrR\times V)$ by (\ref{EstimateofAtrRIdr}).
For $u_0, u_1\in C^{r}(V)$,  we denote
\[\vecu=(u_0, u_1), \]
\[u_t=(1-t)u_0+tu_1,\]
and for any $q\geq 0$
\[|\vecu|_q=|u_0|_q+|u_1|_q.\]

In Step 3 we showed that, in the weak sense (as in Condition 3 of Lemma \ref{lem:MoserTheorem}),
for $(\vecu, v)\in \sF^{r+2}$,
\[
D\sP_{\vfcf}(\vecu,v)=(\vecu,\HtrR_\vfcf(\ut+\Str(v))\big|_{\Rectangle\times V}-\ut-v).
\]
From (\ref{EstimateofAtrRIdr})--(\ref{ExpressionofHtrRAug28})
we see that for $(\vfcf)\in\sN_{\delta_1(\Theta)}^{r+2}$, the map
 $D\sP_{\vfcf}$ can be extended to a map from $\sF^r$ to $\sF^r$, and the
 $L^{\infty}$ estimate (\ref{DsPC0EstimateMoserTheorem}) follows
 from Maximum Principle for leafwise harmonic function. So it remains to show
 the invertibility of $D\sP_{\vfcf}$ and estimate $D\sP^{-1}_{\vfcf}$.

 From now on, $(\vecu,v)$ will be in $\sF^r$.

We decompose $D\sP_{\vfcf}$ as
\begin{align}
D\sP_{\vfcf}(\vecu,v)&=D_1\sP_{\vfcf}(\vecu)+D_2\sP_{\vfcf}(v)\nonumber\\
				&=(\vecu,\HtrR_{\vfcf}(\ut)\big|_{\Rectangle\times V}-\ut)+(0, \HtrR_{\vfcf}(\Str(v))\big|_{\Rectangle\times V}-v).
\label{RealisticDecompositionofTangentialMap_201803151849}
\end{align}

Since $\sP(\vfcf)=(\bvarphi,\mB_{\bvarphi}(\phi)-\phi)$, we can also write $D\sP$ as
\begin{align}
D\sP_{\vfcf}(\vecu,v)=&(\vecu, D_1\mB_{\vfcf}(\vecu)+D_2\mB_{\vfcf}(v)-v)\nonumber\\
				=&(\vecu,v)\left(\begin{matrix}Id&D_1\mB_\vfcf\\ 0& D_2\mB_\vfcf-Id\end{matrix}\right). 		\label{MatrixFormofDsP}
\end{align}

By  (\ref{EstimateofAtrRIdr})--(\ref{ExpressionofHtrRAug28}),
$$
D_1\mB_\vfcf: (u_0, u_1)\rightarrow\HtrR_\vfcf(u_t)\big|_{\Rectangle\times V}-
u_t
$$
is bounded as a map from $C^r(V)\times C^r(V)$ to $\RectFunc^r$.   It follows
that $D\sP_\vfcf$, as map from $\sF^r$ to $\sF^r$, has bounded inverse if
and only if
\[D_2\mB_\vfcf-Id=\HtrR_{\vfcf}(\Str(\ \cdot\ ))\big|_{\Rectangle\times V}-Id,\]
as a map from $\RectFunc^r$ to $\RectFunc^r$, has a bounded inverse. And then
\[D\sP^{-1}_\vfcf(\vecu,v)=(\vecu, v)\left(
\begin{array}{cc}
Id&-D_1\mB_\vfcf(D_2\mB_\vfcf-Id)^{-1}\\
0&(D_2\mB_\vfcf-Id)^{-1}
\end{array}
\right).\]
Note that in above matrix, composition is taken from left to right.

In the following, we will show $D_2\mB_\vfcf$ is contraction map from $\RectFunc^r$ to $\RectFunc^r$ with respect to some weighted norm, so
\begin{equation}
(Id-D_2\mB_\vfcf)^{-1}=Id+D_2\mB_\vfcf+(D_2\mB_\vfcf)^2+(D_2\mB_\vfcf)^3+(D_2\mB_\vfcf)^4+......\label{ConvergentExpansionofInverse}
\end{equation}
The weighted norm will be
\begin{equation}\|\ \cdot\ \|=|\ \cdot\ |_r+A|\ \cdot\ |_{\twothird},					\label{DefinitionofWeightedNorm}
\end{equation}
with $A$ a large number to be determined. To determine $A$ we need to
provide an estimate of $D_2\mB_\vfcf$
with respect to $|\ \cdot\ |_{\twothird}$ and $|\ \cdot\ |_r$ norms, then
we will find that $A$ depending on $\vffnormr$.

Using (\ref{ExpressionofHtrRAug28}) and Theorem A.8 of \cite{HormanderPhysicalGeodesy}, we have
\begin{align}
	&|D_2\mB_{\vfcf}(v)|_{\twothird}\nonumber\\
=	&\left|\HtrR_{\vfcf}(\Str(v))\big|_{\Rectangle\times V}\right|_\twothird\nonumber\\
=	&\left|{(\AtrR_{\vfcf})}_\ast(\prodh(\AtrR_\vfcf^\ast(\Str(v)))\big|_{\Rectangle\times V})\right|_{\twothird}					\label{I0}\\
\leq &C\left|\prodh(\AtrR_{\vfcf}^\ast(\Str(v)))
\big|_{\Rectangle\times V}\right|_\twothird(1+\left|\AtrR_\vfcf^{-1}|_{\Rectangle\times V}\right|_\twothird)^\twothird
				\label{I1}.
\end{align}
In the estimate above, $C$ does not  depend on $\Theta$,
because, from (\ref{I0}) to (\ref{I1}),  functions involved in the composition are
functions on $\Rectangle\times V$.

Using (\ref{EstimateofAtrRiIdr}), we get:
\begin{equation}\left|\AtrR_\vfcf^{-1}|_{\Rectangle\times V}\right|_{\twothird}
\leq C(1+\left|\AtrR_\vfcf^{-1}-Id\right|_\twothird)
\leq C(1+C(\Theta)|\vfcf|_\fourthird)
													\label{AddEstimateofAtrRbyDiffwithIdSep27}.
\end{equation}
From Lemma A.2 we have
\begin{align}
\left|\prodh(\AtrR_{\vfcf}^\ast(\Str(v)))\big|_{\Rectangle\times V}\right|_\twothird
\leq \delta(\Theta) |\AtrR_\vfcf^\ast(\Str(v))|_{\twothird}						\label{I3ChangedSep27}	,
\end{align}
where $\delta(\Theta)$ is a number that tends to zero as $\Theta$ goes to $\infty$.

Then, using Theorem A.8 of \cite{HormanderPhysicalGeodesy}, we get
\begin{align}
 |\AtrR_\vfcf^\ast(\Str(v))|_{\twothird}	\leq C|\Str(v)|_\twothird\left(1+\left|\AtrR_\vfcf|_{(\tcC\times V)\cap\{|\theta|>\Theta-{1\over 2}\}}\right|_\twothird\right)^3.
\end{align}
In above, $C$ does not depend on $\Theta$, because $\overline{\Support(\Str(v))}$ is contained in $(\tcC\times V)\cap\{|\theta|>\Theta-{1\over 2}\}$, and $(\tcC\cap\{|\theta|>\Theta-{1\over 2}\})\times V$ is same as $(\cC\cap\left\{|\theta|>{1\over 2}\right\})\times V.$

Similar to (\ref{AddEstimateofAtrRbyDiffwithIdSep27}),
using (\ref{EstimateofAtrRIdr}) we obtain
\begin{align}
\left|\AtrR_\vfcf|_{(\tcC\times V)\cap\{|\theta|>\Theta-{1\over 2}\}}\right|_\twothird\leq C(1+|\AtrR_\vfcf-Id|_\twothird)\leq C(1+C(\Theta)|\vfcf|_\fourthird).\label{EstimateofAtrRbyDifferencewithIdSep28}
\end{align}

Combining the above estimates, we obtain
\begin{align}
\left|D_2\mB_\vfcf(v)\right|_\twothird\leq C\delta(\Theta)\left(1+C(\Theta)|\vfcf|_\fourthird\right)^6|v|_\twothird.\label{I4}
\end{align}

We can adopt a similar argument,
using(\ref{EstimateofAtrRIdr}) and(\ref{EstimateofAtrRiIdr}),
to get:
\begin{equation}
|D_2\mB_{\vfcf}(v)|_r\leq C\delta(\Theta)
\left(
1+C(\Theta)\vffnormfourthird
\right)^{2r}|v|_r
+
C(\Theta)(1+\vffnormr)|v|_{\twothird}
.\label{I5}
\end{equation}

Combining $(\ref{I4})$ and $(\ref{I5})$, and
 choosing $\Theta$ big enough so that
\[C\cdot \delta(\Theta)\cdot 2^{2r+6}<{1\over 2},\]
and then choosing $\tilde\delta_2$ small enough so that
\[C(\Theta)\cdot \tilde\delta_2<{1\over 2},\]
we have for $(\vfcf)\in \sN^{r+2}_{\min(\delta_1(\Theta),\tilde\delta_2)}$
\begin{equation}
|D_2\mB_\vfcf(v)|_\twothird\leq {1\over 2} |v|_\twothird,   \label{Contractionwrttwothord}
\end{equation}
\begin{equation}
|D_2\mB_{\vfcf}(v)|_r\leq {1\over 2}|v|_r+C(\Theta)(1+\vffnormr)|v|_{\twothird}.\label{Contractionwrtr}
\end{equation}

From now on, we will fix $\Theta$, so when there is no ambiguity, $C(\Theta)$ will be denoted by $C$. And we denote $\min(\delta_1,\tilde\delta_2)=\delta_2.$

Plugging (\ref{Contractionwrttwothord}) and (\ref{Contractionwrtr}) into the definition of weighted norm (\ref{DefinitionofWeightedNorm}) gives:
\begin{align}
   \|D_2\mB_\vfcf(v)\|
=	&|D_2\mB_\vfcf(v)|_r+A|D_2\mB_\vfcf(v)|_\twothird\nonumber\\
\leq &{1\over 2}|v|_r+\left(C(1+\vffnormr)+{A\over 2}\right)|v|_{\twothird}	.	
	\label{PseudoContractionwithWeightedNorm}
\end{align}

Choosing
\[A=6C(1+\vffnormr),\]
we get from (\ref{PseudoContractionwithWeightedNorm})
\[\text{Right Hand Side of (\ref{PseudoContractionwithWeightedNorm})}\leq {2\over 3}(|v|_r+A|v|_\twothird)=\frac{2}{3}\|v\|,\]

With this choice of  $A$, the norm $\|\ \cdot\ \|$ is determined,
so we can estimate $(Id-D_2\mB_\vfcf)^{-1}$:

\begin{align}
\left|(Id-D_2\mB_\vfcf)^{-1}(v)\right|_r&\leq \|(Id-D_2\mB_\vfcf)^{-1}(v)\|\nonumber\\
								&\leq 3\|v\|\nonumber\\
								&= 3|v|_r+3A|v|_\twothird\nonumber\\
								&= 3|v|_r+18C(1+\vffnormr)|v|_\twothird.
\end{align}

Then by direct computation,
\[|D\sP_{\vfcf}^{-1}(v)|_r\leq C(|v|_r+(1+\vffnormr)|v|_\twothird).\]

By Maximum Principle for leafwise harmonic function, we have
\[|D_1\mB_\vfcf(\vecu)|_0\leq C|\vecu|_0.\]

Moreover, $D_2\mB_\vfcf(v)$ is the restriction of a leafwise harmonic function,
$\HtrR_{\vfcf}(\Str( v ))$, to $\Rectangle\times V$. For any point
$p\in \Rectangle\times V$, there is  a unique leaf passing $p$.
We can then restrict $\HtrR_{\vfcf}(\Str( v ))$ to this leaf and project the
value to $\trR$. The resulting function, which we denote by  $h$ in the following,  is a harmonic function on $\trR$ that  equals to zero on $\trR\cap\{t=0,1\}$.
 Then, note that we have  $\Theta>4$, so
 we can extend $h$ by the odd
 reflection across the lines $\{t=0\}$ and $\{t=1\}$,
 to be a harmonic function in $\{-1<t<2\}\times \{-3<\theta<3\}$:
\[\exth(t,\theta)=\left\{
\begin{array}{ccc}
-h(2-t,\theta), &\text{ \ for }& 1<t;\\
h(t,\theta), &\text{ \ for }&0 \leq t\leq 1;\\
-h(-t,\theta), &\text{ \ for }& t<0.\\
\end{array}
\right.\]

Denote
\[M=\max\left\{\exth(t,\theta)\Big|-1\leq t\leq 2, -3\leq \theta\leq 3\right\},\]
\[m=\max\left\{\exth(t,\theta)\Big|-{1\over 2}\leq t\leq {3\over 2}, -2\leq \theta\leq 2\right\}.\]
The reflection construction implies that
\[-M=\min\left\{\exth(t,\theta)\Big|-1\leq t\leq 2, -3\leq \theta\leq 3\right\},\]
\[-m=\min\left\{\exth(t,\theta)\Big|-{1\over 2}\leq t\leq {3\over 2}, -2\leq \theta\leq 2\right\}.\]
Then we can apply Harnack inequality to $\exth+M$ and get there exists $\delta>0$, s.t.
\[M-m\geq \delta\cdot(M+m),\]
so,
\[|D_2\mB_\vfcf(v)(p)|\leq m\leq {1-\delta\over 1+\delta} M\leq {1-\delta\over 1+\delta} |v|_0.\]

Applying the above argument to every leaf, and
noting that $\delta$ does not depend on the leaf, 
we get
\[|D_2\mB_\vfcf(v)|_0\leq {1-\delta\over 1+\delta}|v|_0.\]
This shows that $D_2\mB_\vfcf$ is a contraction in $C^0$ norm. Using (\ref{ConvergentExpansionofInverse}) and (\ref{MatrixFormofDsP})
again, we conclude that $D\sP_\vfcf^{-1}$
is bounded with respect to $C^0$ norm.

Now Condition 4 of Lemma \ref{lem:MoserTheorem} is verified.

\subsection{Conclusion}\label{Conclusion}

Now given $\bvarphi\in C^k(V)\times C^k(V)$
with $|\bvarphi|_k$ small enough, we can apply Lemma \ref{lem:MoserTheorem} and find $\psi\in \RectFunc^{k-\jop}$, such that the Dirichlet Problem of HCMA equation on $\trR\times V$ with boundary data $\varphit+\Str(\psi)$ has solution $\Psi\in C^{k-J-2}(\overline\trR\times V)$ satisfying
\[\Psi|_{\tcC\times V}=\Str(\Psi).\]
So $\Psi$ can be extended to be a periodic function on $\sS\times V$, as argued in Section \ref{IterationFramework}. And since 
\[k-\jop>4+X,\]
according to (\ref{CrEstimateofPhiDisc}) and (\ref{NormEstimateinMoserTheoremOct10}), we have
\[|\Psi|_{k-\jop-2;\sS\times V}\leq |\psi|_{k-\jop}+|\varphi_0|_k+|\varphi_1|_k\leq C(V,\omega_0, k,\jop)\left(|\varphi_0|_k+|\varphi_1|_k\right),\]
so $|\Psi|_2$ small. Then we can apply the argument in Section \ref{IterationFramework}
and conclude  that $\Psi$ does not depend on $\theta$. 
So $\Psi=\psi+\varphi_t$, is a geodesic and 
\[\Psi\in C^{k-\jop}([0,1]\times V),\]
\[|\Psi|_{k-\jop;[0,1]\times V}\leq |\psi|_{k-\jop}+|\varphi_0|_k+|\varphi_1|_k\leq C(V,\omega_0, k,\jop)\left(|\varphi_0|_k+|\varphi_1|_k\right).\]


\appendix
\section{Families of Elliptic Problems}\label{FamilyofProblems}

\subsection{Family of Possion Equations}\label{FamilyofPossionEquations20180311jointHolder}
In this section, we prove the following lemma regarding the H\"older estimate of a family of Possion equations.

\begin{lemma}\label{lem:FamilyofPossionEquationwithBernsteinandExtension20180315}
Let $\ApU$ be a bounded region in $\EC$, with smooth boundary,
and let ${\ApBdelta}$ be the ball in $\ER^n$ with radius $\delta$ and center $0$.
We denote the complex coordinates on $\ApU$ by $\tau=u_1+\sqrt{-1}u_2$, and the
coordinates on $\ApB$ by $x^i$, for $i=1,...,n$.
Then for any $r\notin \EZ$, $r>0$, given
\begin{equation}\label{lem:FamilyofPossion-RegRhs}
f \in C^r(\overline\ApU\times \overline\ApB),\quad
\varphi\in C^r(\partial \ApU\times \overline\ApB)
\end{equation}
there exists a unique $h\in C^r(\overline\ApU\times \overline\ApB)$ satisfying:
\begin{align}
\triangle_\tau h=f, &\text{   in  \  } \ApU\times \overline\ApB;
\label{20180312equationofh}\\
h=\varphi,&\text{ on\  }\partial \ApU\times \overline\ApB.
\label{20180312equationofh-bc}
\end{align}
Here $\triangle_\tau$ stands for $\partial_{u_1}^2+\partial_{u_2}^2$.
Moreover, $h$ satisfies
\begin{equation}
|h|_{r; \ApU\times\ApB}\leq C(\ApU, r)(|\varphi|_{r;\ApU\times\ApB}+|f|_{r;\ApU\times\ApB}).
\label{20180311jointHolderEstimate}
\end{equation}
\end{lemma}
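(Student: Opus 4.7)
The plan is to reduce to the classical Dirichlet theory for the two-dimensional Poisson equation on the smooth domain $\ApU$, treating $x\in\overline\ApB$ as a parameter, and to propagate parameter regularity through the linearity of the solution operator.

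First, for each fixed $x$ the problem (\ref{20180312equationofh})--(\ref{20180312equationofh-bc}) becomes the standard Dirichlet problem for $\triangle_\tau$ on the smooth two-dimensional domain $\ApU$; classical Schauder theory then yields a unique $h(\cdot,x)\in C^r(\overline\ApU)$ with the slicewise estimate
\[
|h(\cdot,x)|_{r;\overline\ApU}\leq C(\ApU,r)\bigl(|f(\cdot,x)|_{r-2;\overline\ApU}+|\varphi(\cdot,x)|_{r;\partial\ApU}\bigr)\leq C\bigl(|f|_{r}+|\varphi|_{r}\bigr),
\]
uniform in $x$. Global uniqueness follows from slicewise uniqueness via the maximum principle. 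Next I would show by induction on $|\beta|\leq\lfloor r\rfloor$ that $\partial_x^\beta h$ exists classically on $\overline\ApU\times\overline\ApB$ and solves the Dirichlet problem with data $(\partial_x^\beta f,\partial_x^\beta\varphi)\in C^{r-|\beta|}$ jointly. The step uses finite difference quotients in $x$, which satisfy Dirichlet problems whose data converge to $(\partial_x^\beta f,\partial_x^\beta\varphi)$; applying the slicewise Schauder bound to the difference of two such quotients gives convergence in $C^{r-|\beta|}(\overline\ApU)$ uniformly in $x$, and slicewise uniqueness identifies the limit. Consequently $|\partial_x^\beta h(\cdot,x)|_{r-|\beta|;\overline\ApU}\leq C(|f|_r+|\varphi|_r)$ uniformly in $x$ for every $|\beta|\leq\lfloor r\rfloor$.

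The joint H\"older estimate I would prove by induction on $k:=\lfloor r\rfloor$, writing $r=k+\alpha$. For the base case $k=0$, the slicewise $\alpha$-H\"older bound in $\tau$ combines with the maximum-principle comparison $|h(\cdot,x_1)-h(\cdot,x_2)|_{L^\infty}\leq C|x_1-x_2|^\alpha(|f|_r+|\varphi|_r)$ to give separate $\alpha$-H\"older continuity in each variable, which for $\alpha\in(0,1)$ upgrades to joint $\alpha$-H\"older continuity via the triangle inequality. For the inductive step, applying the inductive hypothesis at level $r-1$ to $\partial_{x^i}h$ (whose data is in $C^{r-1}$ jointly) yields $\partial_{x^i}h\in C^{r-1}(\overline\ApU\times\overline\ApB)$, so every $k$-th order mixed partial $\partial_\tau^a\partial_x^\beta h$ with $|\beta|\geq 1$ is immediately $\alpha$-H\"older jointly.

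The main obstacle is the joint $\alpha$-H\"older continuity in $x$ of the pure top-order $\tau$-derivative $\partial_\tau^k h$, since this case cannot be reduced to a lower-order $x$-derivative via the inductive hypothesis. To handle it I would apply the slicewise Schauder estimate to $v(\tau):=h(\tau,x_1)-h(\tau,x_2)$, using the refined data bound $|g_f|_{r-2;\tau}+|g_\varphi|_{r-1;\tau}\leq C|x_1-x_2|^\alpha(|f|_r+|\varphi|_r)$ (obtained by representing differences of lower-order $\tau$-derivatives of the data as $\tau$-integrals of higher-order $\tau$-derivatives and invoking their joint H\"older continuity in $x$), combined with a local flattening of $\partial\ApU$ and use of the equation $\triangle_\tau h=f$ to trade normal $\tau$-derivatives for tangential ones plus $f$-derivatives (whose $x$-H\"older behavior is already under control). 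This yields the bound $|\partial_\tau^k v|_{L^\infty}\leq C|x_1-x_2|^\alpha(|f|_r+|\varphi|_r)$ and closes the induction, giving the desired joint estimate (\ref{20180311jointHolderEstimate}).
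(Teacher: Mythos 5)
Your proposal and the paper agree on the first steps: solving slicewise Dirichlet problems via Schauder, establishing the directional $C^r$ estimate in $\tau$ uniformly in $x$, and establishing the directional $C^r$ estimate in $x$ uniformly in $\tau$ by showing $\partial_x^\beta h$ exists and solves the differentiated Dirichlet problem. From there your argument diverges completely from the paper's, and I believe it has a genuine gap.

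The paper does not attempt to control the joint H\"older norm by any further PDE argument. Instead it extends $h$ from $\overline\ApU\times\overline\ApB$ to all of $\EC\times\ER^n$ using linear extension operators that respect the two variable groups, verifies that the extension retains both directional $C^r$ bounds, and then invokes a theorem of Bernstein (Theorem 1 of \cite{KrantzOntology}) to pass from the two directional $C^r$ estimates to the joint one. The extension step is not cosmetic: the Bernstein--Krantz ``separate directional regularity implies joint regularity'' statement is an interior result, and extending to a full neighborhood is precisely what makes it applicable.

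You instead induct on $[r]$, handling mixed partials $\partial_\tau^a\partial_x^\beta h$ with $|\beta|\ge 1$ via the inductive hypothesis applied to $\partial_{x^i}h$, and treating the pure derivatives $\partial_\tau^{[r]}h$ --- what you correctly identify as the main obstacle --- by a slicewise Schauder bound for $v=h(\cdot,x_1)-h(\cdot,x_2)$ with data bounds $|g_f|_{r-2}$, $|g_\varphi|_{r-1}$ of order $|x_1-x_2|^\alpha$. The gap is here. The Schauder estimate that would close this step, namely a bound on $|\partial_\tau^{[r]}v|_{L^\infty(\overline\ApU)}$ in terms of $|g_f|_{r-2}+|g_\varphi|_{r-1}$, is false: with Dirichlet data merely in $C^{r-1}(\partial\ApU)$ the harmonic part of $v$ lies only in $C^{r-1}(\overline\ApU)$, and its $[r]$-th derivatives need not even be bounded near $\partial\ApU$ (already a harmonic function on the disc with $C^\alpha$ Dirichlet data, $0<\alpha<1$, can have unbounded gradient at the boundary). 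The proposed repair --- boundary flattening and trading normal for tangential derivatives via $\triangle_\tau h = f$ --- does not remove the obstruction, because after eliminating normal derivatives you are left to control the pure tangential $[r]$-th derivative of $v$ at the boundary, i.e.\ $\partial_\tau^{[r]}g_\varphi$, which is one full order beyond the $C^{r-1}$ regularity you have for $g_\varphi$. Nor can you upgrade to $C^r$ Schauder: the full $C^r(\partial\ApU)$ norm of $g_\varphi$ does not gain any power of $|x_1-x_2|$ in the top H\"older seminorm (the best one gets there by interpolation is $|x_1-x_2|^{\alpha-\alpha'}$ in $C^{[r],\alpha'}$ for $\alpha'<\alpha$, losing a positive amount), so the argument does not yield the sharp exponent $\alpha$. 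This is precisely the kind of difficulty that the paper's extension plus Bernstein/Krantz route is designed to sidestep, and it is why that machinery is used rather than an induction on the order.
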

\begin{proof}
First we show the existence and uniqueness of solution. For each $x\in\ApB$,  we solve the
Dirichlet problem (\ref{20180312equationofh}), (\ref{20180312equationofh-bc})
for $h(\cdot, x)$ in the domain $\ApU$.
From (\ref{lem:FamilyofPossion-RegRhs}), we have $h(\cdot, x)
\in  C^{r}(\overline\ApU)\cap C^{r+2}(\ApU)$ for each $x\in\ApB$.
It remains to show (\ref{20180311jointHolderEstimate}).

We will show below that to prove the estimate
(\ref{20180311jointHolderEstimate}), we only need to estimate the directional
H\"older norms of $h$ in $\overline\ApU\times \overline\ApB$,
i.e. H\"older norms separately for variables on
$\ApU$ and on $\ApB$, and most importantly we need to prove the following estimates:
for some constant $C$ independent of $x\in \overline\ApB$,
and $\tau\in \overline \ApU$,
\begin{align}
\left|h(\cdot, x)\right|_r&\leq C(|\varphi|_r+|f|_r),
\text{\ \ \ \ \ \ \ \ \ \  \ \ \ \ \  for any } x\in \overline\ApB,
\label{20180311HolderonProductLeaf}\\
\left|h(\tau, \cdot)\right|_r&\leq C (|\varphi|_r+|f|_r),
\text{\ \ \ \ \ \ \ \ \ \  \ \ \ \ \  for any } \tau\in \overline \ApU.
\label{20180311HolderonManifold}
\end{align}

The estimate (\ref{20180311HolderonProductLeaf}) directly follows from
the Schauder estimates, using the fact that
$$
|\varphi(\cdot, x)|_r \le C |\varphi|_r, \quad
|f(\cdot, x)|_r \le C |f|_r \text{\ \ \ \ \ \ \ \ \ \  \ \ \ \ \  for any }
x\in \overline\ApB,
$$
where $C$ does not depend on $x$.

Next, we prove estimate (\ref{20180311HolderonManifold}). We will
 use the notation $r=m+\alpha$, for $m\in \EZ,\ 0<\alpha<1$.

 If $m\ge 1$,
 we first show the existence of $D_x^j h$ for $j=1,\dots, m$. Let $H_k$, $k=1,\dots, n$,
 be the solution of problem (\ref{20180312equationofh}), (\ref{20180312equationofh-bc})
 with the right-hand sides $D_{x_k} f$, $D_{x_k} \varphi$. Let
  $H=(H_1, \dots, H_k)$. Fix $x, \hat x \in \overline\ApB$.
Let
\begin{align*}
& q(\tau)= h(\tau, \hat x)- h(\tau, x)-H(\tau, x)\cdot(\hat x-x),\\
& F(\tau):= f(\tau, \hat x)- f(\tau,x)-D_xf(\tau, x)\cdot(\hat x-x),\quad
 \Phi(\tau):= \varphi(\tau, \hat x)- \varphi(\tau,x)-D_x\varphi(\tau, x)\cdot(\hat x-x).
\end{align*}
Then $q(\cdot)$ satisfies
$q\in C^{m-1,\alpha}(\overline\ApU)\cap C^{m+1,\alpha}(\ApU)$ with $m\ge 1$, and
\begin{equation}\label{20180311Syst_q}
\begin{split}
\triangle_\tau q=F, &\text{  \ \ \   in  \  } \ApU; 
\\
q=\Phi,&\text{ \ \ \  on\  }\partial \ApU. 
\end{split}
\end{equation}
From this, noting that $m\ge 1$, so $r\ge 1+\alpha$,
$$
|q|_{C^0(\overline\ApU)}\le C(\ApU)(|F|_{C^0(\overline\ApU)}+|\Phi|_{C^0(\partial\ApU)})
\le C(\ApU)(|f|_r+|\varphi|_r)|x-\hat x|^{1+\alpha}.
$$
It follows
 that $D_x h(\tau, x)$ exists for all
$(\tau, x)\in\overline\ApU\times \overline\ApB$, specifically $D_x h = H$. Then,
for each $x\in \overline\ApB$
\begin{align*}
|D_x h(\cdot,  x)|_{C^0(\overline\ApU)}&=|H(\cdot,  x)|_{C^0(\overline\ApU)}
\le C(\ApU)(|D_x f(\cdot,  x)|_{C^0(\overline\ApU)}+|D_x\varphi(\cdot,  x)|_{C^0(\partial\ApU)})\\
&\le
C(\ApU)(|D_x f|_{C^0(\overline\ApU\times \overline\ApB)}+
|D_x\varphi|_{C^0(\partial\ApU\times\overline\ApB)}),
\end{align*}
that is
$$|D_x h|_{L^\infty(\overline\ApU\times \overline\ApB)}
\le C(\ApU)(| f|_r+
|\varphi|_r).
$$
By a similar argument, $D_x^j h$ exist in $\overline\ApU\times \overline\ApB$
for each $j=1,\dots, m$, and
$$|D_x^j h|_{L^\infty(\overline\ApU\times \overline\ApB)}
\le  C(\ApU)(| f|_r+
|\varphi|_r).
$$

Next we consider the general case $m\ge 0$, and estimate the H\"older seminorm of $D_x^m h$. Denote
\begin{align*}
& q(\tau)=D_x^m h(\tau, \hat x)-D_x^m h(\tau, x),\\
& F(\tau):=D_x^m f(\tau, \hat x)-D_x^m f(\tau,x), \quad
 \Phi(\tau):=D_x^m \varphi(\tau, \hat x)-D_x^m \varphi(\tau,x).
\end{align*}
Then $q$, $F$, $\Phi$ satisfy (\ref{20180311Syst_q}), and we obtain
$$|q|_0\le C(\ApU)(|F|_{C^0(\overline\ApU)}+|\Phi|_{C^0(\partial\ApU)})
\le C(|f|_r+|\varphi|_r)|x-\hat x|^\alpha.$$
Now  (\ref{20180311HolderonManifold}) is proved.

Next we need to extend $h$ from $\overline \ApU\times \overline\ApB$ to
a larger open region, or, equivalently, to the
whole space $\EC\times \ER^n$, so that the extension satisfies estimates
(\ref{20180311HolderonProductLeaf})--(\ref{20180311HolderonManifold})
in the whole space.
We will do that in two steps, first extending from
$\overline \ApU\times \overline\ApB$ to $\EC\times \overline\ApB$,
and then to $\EC\times \ER^n$.

\newcommand{\Ext}{{\mathcal E}}
Let $\Ext$ be an extension operator, acting from $C^{\rho}(\overline\ApU)$ to
$C^{\rho}(\EC)$ for each $\rho\in [0, m+\alpha]$,
 such that $\Ext$ is a linear operator, satisfying
\begin{equation}\label{bddness-ext}
|\Ext[v]|_{C^{\rho}(\EC)} \le C(\rho)|v|_{C^{\rho}(\overline\ApU)}
\quad\mbox{for each }\;\rho\in [0, r],\;
 v\in C^{\rho}(\overline\ApU).
\end{equation}
For example, extension operator  defined in
section 6.9 of \cite{GT} satisfies these properties, where the linearity
follows from its explicit definition, and (\ref{bddness-ext})
follows from (6.94) of \cite{GT}. Define the function
$h_1$ on $\EC\times \overline\ApB$ by
$$
h_1(\tau, x)=\Ext[h(\cdot, x)](\tau)\quad\mbox{for }\; \tau\in \EC,\; x\in
\overline\ApB.
$$
Then, by (\ref{20180311HolderonProductLeaf}) and (\ref{bddness-ext}),
there exists constant $C$ such that
\begin{align}
\left|h_1(\cdot, x)\right|_r&\leq C(|\varphi|_r+|f|_r),
\text{\ \ \ \ \ \ \ \ \ \  \ \ \ \ \  for any } x\in \overline\ApB,
\label{20180311HolderonProductLeaf-ext1}
\end{align}
i.e. $h_1$ satisfies (\ref{20180311HolderonProductLeaf}) in $\EC$.

Next we show that $h_1$ satisfies (\ref{20180311HolderonManifold})
for each $\tau\in \EC$, i.e. that
\begin{align}
\left|h_1(\tau, \cdot)\right|_r&\leq C (|\varphi|_r+|f|_r),
\text{\ \ \ \ \ \ \ \ \ \  \ \ \ \ \  for any } \tau\in  \EC.
\label{20180311HolderonManifold-h1}
\end{align}
 The argument is similar to the proof of
(\ref{20180311HolderonManifold}) for $h$ above, with the use of
(\ref{bddness-ext}) instead of the estimates for Dirichlet problem in the
previous argument. We sketch this proof:

First, if $m\ge 1$,
we show the existence of $D_x^j h_1$ for $j=1,\dots, m$.
Fix $x, \hat x \in \overline\ApB$.
Denote $ H_1(\tau, x)=\Ext[D_x h(\cdot, x)](\tau)$.
Using the linearity of $\Ext$, we have:
\begin{align*}
 h_1(\cdot, \hat x)- h_1(\cdot, x)-H_1(\cdot, x)\cdot(\hat x-x)
 = \Ext\left[ h(\cdot, \hat x)- h(\cdot, x)-D_x h(\cdot, x)\cdot(\hat x-x)
 \right].
\end{align*}
Using (\ref{bddness-ext}) with $\rho=0$ to
estimate
the  $L^\infty(\EC)$-norm of the last expression,
we  obtain
\begin{align*}
|h_1(\cdot, \hat x)- h_1(\cdot, x)-H_1(\cdot, x)\cdot(\hat x-x)|_{L^\infty(\EC)}
&\le
 C| h(\cdot, \hat x)- h(\cdot, x)-D_x h(\cdot, x)\cdot
 (\hat x-x)|_{C^0(\overline\ApU)}.
\end{align*}
By (\ref{20180311HolderonManifold}), and noting that $r\ge m+\alpha\ge 1+\alpha$ , for any $\tau\in\overline\ApU$,
\begin{align*}
| h(\tau, \hat x)- h(\tau, x)-D_x h(\tau, x)\cdot
 (\hat x-x)|
& \le C|h(\tau, \cdot)|_r |x-\hat x|^{1+\alpha}
\le C(\ApU)(| f|_r+
|\varphi|_r)|x-\hat x|^{1+\alpha}.
 \end{align*}
 Combining the last two estimates,
 \begin{align*}
|h_1(\cdot, \hat x)- h_1(\cdot, x)-H_1(\cdot, x)\cdot(\hat x-x)|_{L^\infty(\EC)}
\le C(\ApU)(| f|_r+
|\varphi|_r)|x-\hat x|^{1+\alpha}.
 \end{align*}
We conclude that  $D_x h_1(\tau, x)$ exists for all
$(\tau, x)\in\overline\EC\times \overline\ApB$, specifically $D_x h_1 = H_1$.
Then, for each $ x\in \overline\ApB$, using (\ref{bddness-ext}) with $\rho=0$,
and then using (\ref{20180311HolderonProductLeaf}),
we have
\begin{align*}
|D_x h_1(\cdot,  x)|_{C^0(\EC)}&=|\Ext[D_x h(\cdot, x)]|_{C^0(\EC)}\\
&
\le C|D_x h(\cdot, x)|_{C^0(\overline\ApU)}\\
&\le
C(\ApU)(| f|_r+
|\varphi|_r).
\end{align*}
By a similar argument, we can show the existence and estimates
of $D_x^j h$ in $\EC\times \overline\ApB$
for each $j=1,\dots, m$, and moreover,
\begin{align}
\label{ExprExtDeriv-h1}
&D_x^j h_1(\cdot, x)=\Ext[D_x^j h(\cdot, x)],\\
\label{EstExtDeriv-h1}
&|D_x^j h_1|_{L^\infty(\EC\times \overline\ApB)}
\le  C(\ApU)(| f|_r+
|\varphi|_r)\quad\mbox{ for each }\; j=1,\dots, m.
\end{align}

Next, in the general case $m\ge 0$, from (\ref{ExprExtDeriv-h1}) and (\ref{bddness-ext}) with $\rho=0$,
 we  get that, for any $x, \hat x \in \overline\ApB$,
$$
|D_x^m h_1(\cdot, \hat x)-D_x^m h_1(\cdot, x)|_{L^\infty(\EC)}
=|\Ext[D_x^m h(\cdot, \hat x)-D_x^m h(\cdot, x)]|_{L^\infty(\EC)}
\le C|D_x^m h(\cdot, \hat x)-D_x^m h(\cdot, x)|_{C^0(\ApU)}.
$$
By (\ref{20180311HolderonManifold}), for any $\tau\in\overline\ApU$,
$$
|D_x^m h(\tau, \hat x)-D_x^m h(\tau, x)|
\le |h(\tau, \cdot)|_r |\hat x - x|^\alpha
\le C (|\varphi|_r+|f|_r) |\hat x - x|^\alpha.
$$
Combining the last two estimates, we obtain
$$
|D_x^m h_1(\cdot, \hat x)-D_x^m h_1(\cdot, x)|_{L^\infty(\EC)}
\le C(|\varphi|_r+|f|_r) |\hat x - x|^\alpha.
$$
From this and (\ref{EstExtDeriv-h1}) we obtain (\ref{20180311HolderonManifold-h1}).

Now we extend $h_1$ from $\EC\times \overline\ApB$ to $\EC\times \ER^n$.
We argue similarly as above:
\newcommand{\Extx}{\hat{\mathcal E}}
Let $\Extx$ be an extension operator, acting from $C^{\rho}(\overline\ApB)$ to
$C^{\rho}(\ER^n)$ for each $\rho\in [0, m+\alpha]$,
 such that $\Extx$ is a linear operator, satisfying
\begin{equation}\label{bddness-extxxx}
|\Extx[v]|_{C^{\rho}(\ER^n)} \le C(\rho)|v|_{C^{\rho}(\overline\ApB)}
\quad\mbox{for each }\;\rho\in [0, r],\;
 v\in C^{\rho}(\overline\ApB),
\end{equation}
and let
$$
h_2(\tau, x)=\Extx[h_1(\tau, \cdot)](x)\quad\mbox{for }\; \tau\in \EC,\; x\in
\ER^n.
$$
Then using (\ref{20180311HolderonProductLeaf-ext1}),
(\ref{20180311HolderonManifold-h1}) and (\ref{bddness-extxxx}),
and following the proof of (\ref{20180311HolderonProductLeaf-ext1}),
(\ref{20180311HolderonManifold-h1}), but reversing the roles
of $x$ and $\tau$ variables, we obtain the existence of $C$ such that
\begin{align}
\left|h_2(\cdot, x)\right|_r&\leq C(|\varphi|_r+|f|_r),
\text{\ \ \ \ \ \ \ \ \ \  \ \ \ \ \  for any } x\in \ER^n,
\label{20180311HolderonProductLeaf-ext2}
\\
\left|h_2(\tau, \cdot)\right|_r&\leq C (|\varphi|_r+|f|_r),
\text{\ \ \ \ \ \ \ \ \ \  \ \ \ \ \  for any } \tau\in  \EC.
\label{20180311HolderonManifold-h2}
\end{align}

Now we can apply   a theorem of Bernstein (Theorem 1 of \cite{KrantzOntology}) to
$h_2$,  and get the joint H\"older estimate (\ref{20180311jointHolderEstimate}).
\end{proof}

\begin{remark}
It is easy to see that in the statement of the lemma above we can replace $\ApB$
 by any compact smooth manifold, and get the corresponding results.
\end{remark}

\subsection{Family of  Riemann-Hilbert Problems with Constant Coefficients}\label{FamilyofConstantCoefficientRHProblem}
\begin{lemma}\label{lem:FamilyofConstantCoefficientRHProblem}
Let $B$ be the closed unit ball in $\EC^n$, $D$ be the unit disc in $\EC$.
We denote the complex coordinates on $D$ and $B$ by $\tau$ and $z^i$,
for $i=1,..., n$, respectively. Let $A=(A_{ij}), S=(S_{ij})\in
C^{\infty}(B\torange \EC^{n\times n})$
satisfy
\[\det A\neq 0. \]
Then for any $\rhb\in C^\rhrho(\partial D\times B\torange \EC^n)$, with $\rhrho>0$,
$\rhrho\notin \EZ$, there exists $(\rhf,\rhh)\in C^\rhrho(\cD\times B\torange \EC^n\times \EC^n)$ solving
\begin{align}
\partial_{\overline\tau}\rhf=\partial_{\overline\tau}\rhh=0,& \text{ in } D\times B;		 \label{HolomorphicEquationFamilyofRHProblemLemma}\\
A\overline\rhf+S\rhf-\rhh=\rhb, &\text{ on } \partial D\times B 	;		 \label{BoundaryConditionFamilyofRHProblemLemma}\\
\rhf(-\sqrt{-1},\cdot)=0.													\label{OnePtBoundaryConditionFamilyofRHProblemLemma}
\end{align}

Moreover,
\[|\rhf|_{\rhrho; \cD\times B}+|\rhh|_{\rhrho; \cD\times B}\leq C(\rhrho, A, S) |\rhb|_{\rhrho; \partial D\times B}.
\]
\end{lemma}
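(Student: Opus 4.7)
The plan is to solve this parametric Riemann-Hilbert problem by reducing it to a family of constant-coefficient problems on the disc (one for each $z\in B$), solving each fiberwise by Fourier series / Szeg\H{o} projection, and then promoting the pointwise-in-$z$ estimates to a joint H\"older estimate by the extension-plus-Bernstein technique used in Lemma \ref{lem:FamilyofPossionEquationwithBernsteinandExtension20180315}.

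First I would normalize by setting $A\equiv I$. Since $A\in C^\infty(B;\EC^{n\times n})$ is pointwise invertible, $A^{-1}\in C^\infty(B;\EC^{n\times n})$ as well. Left-multiplying the boundary equation $(\ref{BoundaryConditionFamilyofRHProblemLemma})$ by $A(z)^{-1}$ and relabeling $\rhh\mapsto A^{-1}\rhh$, $S\mapsto A^{-1}S$, $\rhb\mapsto A^{-1}\rhb$ yields an equivalent problem with $A=I$; since multiplication by $A^{\pm 1}$ changes $C^\rhrho$-norms only by constants depending on $A$, this reduction is cost-free.

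Next, for each fixed $z\in B$, I would solve the boundary equation $\overline{\rhf}+S\rhf-\rhh=\rhb$ on $\partial D$ by decomposition in Fourier modes in $\tau=e^{i\theta}$. Let $P_+$ and $P_-$ denote Szeg\H{o} projections onto non-negative, respectively strictly-negative, Fourier modes. Both $S\rhf$ and $\rhh$ are boundary values of functions holomorphic in $\tau$, so applying $P_-$ kills them and leaves $\overline{\rhf}-\overline{\rhf(0,z)}=P_-\rhb(\cdot,z)$. Conjugating,
\[
\rhf(\tau,z)=\tilde g(\tau,z)-\tilde g(-\sqrt{-1},z),
\]
where $\tilde g(\cdot,z)$ is the holomorphic extension from $\partial D$ to $\cD$ of $\overline{P_-\rhb(\cdot,z)}$ (which has only non-negative Fourier modes and vanishes at $\tau=0$), and the subtraction enforces $(\ref{OnePtBoundaryConditionFamilyofRHProblemLemma})$. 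Then $\rhh$ is forced on $\partial D$ by $\rhh=S\rhf+\overline{\rhf(0,z)}-P_+\rhb$, which is holomorphic in $\tau$ and hence extends uniquely to the disc. This yields an explicit solution formula, and also its uniqueness.

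Finally, the estimates. The Szeg\H{o} projections $P_\pm$ are bounded on $C^\rhrho(\partial D)$ precisely when $\rhrho\notin\EZ$ (this is exactly where the hypothesis $\rhrho\notin\EZ$ is used: Szeg\H{o} projection fails to be bounded at integer H\"older indices), and the holomorphic extension from $\partial D$ to $\cD$ is bounded in $C^\rhrho$. Combined with pointwise multiplication by the smooth matrices $S$ and (originally) $A^{-1}$, this gives, for each fixed $z\in B$, $|\rhf(\cdot,z)|_\rhrho+|\rhh(\cdot,z)|_\rhrho\le C(\rhrho,A,S)|\rhb(\cdot,z)|_\rhrho$. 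Because every operation in $z$ is either pointwise multiplication by a $C^\infty$ matrix or a $\tau$-only boundary operation with $z$-independent kernel, for each fixed $\tau\in\cD$ one likewise gets $|\rhf(\tau,\cdot)|_\rhrho+|\rhh(\tau,\cdot)|_\rhrho\le C(\rhrho,A,S)|\rhb|_\rhrho$. The main obstacle is passing from these two directional H\"older bounds to a joint H\"older estimate on $\cD\times B$; I would do this exactly as in the second half of the proof of Lemma \ref{lem:FamilyofPossionEquationwithBernsteinandExtension20180315}, by using the linear extension operator of Section 6.9 of \cite{GT} in each variable separately to produce an extension of $(\rhf,\rhh)$ to a larger product domain with the same directional estimates, and then invoking Bernstein's theorem (Theorem 1 of \cite{KrantzOntology}) to conclude the joint $C^\rhrho$ bound $(\ref{20180311jointHolderEstimate})$.
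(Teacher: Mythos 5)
Your proposal is correct, but it takes a genuinely different route from the paper. The paper never normalizes $A=I$ and never invokes Szeg\H{o} projections: instead, it stacks the boundary condition with its complex conjugate into a $2n\times 2n$ real-linear system and left-multiplies by the constant matrix $P=\left(\begin{smallmatrix}I&I\\-\sqrt{-1}I&\sqrt{-1}I\end{smallmatrix}\right)$ to convert the Riemann--Hilbert condition for the pair $(\rhg_1,\rhg_2)$ into the statement that $2\,\mathrm{Re}\,\rhg_1$ and $2\,\mathrm{Re}\,\rhg_2$ are harmonic in $\tau$ with prescribed Dirichlet data on $\partial D\times B$. All parametric H\"older analysis is then delegated to Lemma \ref{lem:FamilyofPossionEquationwithBernsteinandExtension20180315} as a black box, so there is no need to re-run the extension-plus-Bernstein argument. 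Your fiberwise Szeg\H{o} approach exploits exactly the same structural fact that allows the paper's reduction to work -- that $A$ and $S$ are $\tau$-independent, so there is no winding-number obstruction -- but makes it explicit via the observation that $P_-$ annihilates every $\tau$-holomorphic term. The trade-offs: the paper's method is more economical because it reuses Lemma \ref{lem:FamilyofPossionEquationwithBernsteinandExtension20180315} wholesale, whereas you would have to repeat the directional-to-joint H\"older bootstrap for your Szeg\H{o}/Cauchy-extension solution operator (your sketch of that step is plausible but would need the same care as the proof of Lemma \ref{lem:FamilyofPossionEquationwithBernsteinandExtension20180315}, e.g.\ controlling Taylor remainders in $z$ of the solution by those of $\rhb$ through the $z$-independent operator $P_-$); on the other hand, your explicit fiberwise formula $\rhf=\tilde g-\tilde g(-\sqrt{-1},\cdot)$, $\rhh=S\rhf+\overline{\rhf(0,\cdot)}-P_+\rhb$ gives uniqueness essentially for free, and makes transparent where the hypothesis $\rhrho\notin\EZ$ enters (boundedness of $P_\pm$ on non-integer H\"older spaces), which in the paper is hidden inside the Schauder theory of Lemma \ref{lem:FamilyofPossionEquationwithBernsteinandExtension20180315}.
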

\begin{proof}
Combining (\ref{BoundaryConditionFamilyofRHProblemLemma}) with its complex conjugation, we get
\begin{equation}
\left(
\begin{array}{cc}
S&-I\\ \overline{A}& 0
\end{array}
\right)
\left(
\begin{array}{c}\rhf\\ \rhh
\end{array}
\right)+
\left(
\begin{array}{cc}
A&0\\ \overline{S}&-I
\end{array}
\right)
\left(
\begin{array}{c}\overline\rhf\\\overline \rhh
\end{array}
\right)=
\left(
\begin{array}{c}\rhb\\ \overline\rhb
\end{array}
\right)			.		\label{BoundaryConditionMatrixFormFamilyofRHProblemLemma}
\end{equation}
Note that
\[\left(
\begin{array}{cc}
A&0\\ \overline{S}&-I
\end{array}
\right)=
\overline{
\left(
\begin{array}{cc}
0&I\\ I& 0
\end{array}
\right)
\left(
\begin{array}{cc}
S&-I\\ \overline{A}& 0
\end{array}
\right)},
\]
so if $P\in \EC^{n\times n}$ is a constant matrix satisfying
\[{\overline P}^{-1}P=
\left(
\begin{array}{cc}
0&I\\ I& 0
\end{array}
\right),
\]
we can transform (\ref{BoundaryConditionMatrixFormFamilyofRHProblemLemma}) into
\[P \left(
\begin{array}{cc}
S&-I\\ \overline{A}& 0
\end{array}
\right)
\left(
\begin{array}{c}\rhf\\ \rhh
\end{array}
\right)+
\overline{ P
\left(
\begin{array}{cc}
S&-I\\ \overline{A}& 0
\end{array}
\right)
\left(
\begin{array}{c}\rhf\\ \rhh
\end{array}
\right)}=
P\left(
\begin{array}{c}\rhb\\ \overline\rhb
\end{array}
\right).\]
Such matrix $P$ obviously exists, for example
\[P=\left(\begin{array}{cc}I&I\\-\sqrt{-1} I& \sqrt{-1}I\end{array}\right).\]
Then, denoting
\[
\left(
\begin{array}{c}
\rhg_1\\ \rhg_2
\end{array}
\right)
=
\left(\begin{array}{cc}I&I\\-\sqrt{-1} I& \sqrt{-1}I\end{array}\right)
\left(
\begin{array}{cc}
S&-I\\ \overline{A}& 0
\end{array}
\right)
\left(\begin{array}{c}
\rhf\\
\rhh\end{array}
\right),
\]
we can reduce (\ref{HolomorphicEquationFamilyofRHProblemLemma}) (\ref{BoundaryConditionFamilyofRHProblemLemma}) (\ref{OnePtBoundaryConditionFamilyofRHProblemLemma}) to
\begin{align}
\partial_{\overline\tau}\rhg_1=\partial_{\overline\tau}\rhg_2=0,& \text{ in } D\times B;\\
\left(
\begin{array}{c}
\rhg_1\\ \rhg_2
\end{array}
\right)+
\overline
{\left(
\begin{array}{c}
\rhg_1\\ \rhg_2
\end{array}
\right)}=
\left(
\begin{array}{c}
\rhb+\overline\rhb\\ -\sqrt{-1}\;\rhb+\sqrt{-1}\overline\rhb
\end{array}
\right), &\text{ on } \partial D\times B;
\label{DirichletRiemannHilbert}\\
\rhg_1(-\sqrt{-1},z)=\rhb(-\sqrt{-1},z),\ \ \rhg_2(-\sqrt{-1},z)=-\sqrt{-1}\rhb(-\sqrt{-1},z), &\text {  for any } z\in B.
\end{align}

Then  functions $2\mbox{Re}\, \rhg_1$ and $2\mbox{Re}\, \rhg_2$ are harmonic with
respect to $\tau$-variables,
and have Dirichlet data given by the right-hand side of (\ref{DirichletRiemannHilbert}).
Now by Lemma \ref{lem:FamilyofPossionEquationwithBernsteinandExtension20180315}, we have
\[|\rhg_1,\rhg_2|_{\rhrho; \cD\times B}\leq C(\rhrho)|\rhb|_{\rhrho;\partial D\times B},\]
and so,
\[|\rhf,\rhh|_{\rhrho; \cD\times B}\leq C(\rhrho, A, S)|\rhb|_{\rhrho;\partial D\times B},\]
with $C(\rhrho, A, S)$ depending on $\sup\{\frac{1}{\det A}\}$ and $|A, S|_{\rhrho; B}$.\end{proof}

\subsection{Family of Harmonic Functions in a Long Strip}
\label{FamilyofDirichletProblemofHarmonicFunctiononLongStrip}
In this section, we adopt notations of Section \ref{Iteration}.

Given $r\geq0,$ we define $\lh: \RectFunc^r\rightarrow \RectFunc^r$ as following. For $F\in \RectFunc^r$, let $H$ be the function in $C^0(\ctrR\times V)$, satisfying
\[
\left\{
\begin{array}{cc}
\triangle_\tau H=0, & \text{ in } \trR\times V  ;\\
H=\Str(F),\ &\text{ on }{\partial \trR\times V}.
\end{array}
\right.
\]
Then
\[\lh(F):= H\big|_{\Rectangle\times V}.\]

\begin{lemma}\label{lem:onLongStripContractionMap}
For any $r>0$, $\Theta>2$, $\lh$ is a bounded map from $\RectFunc^r$ to $\RectFunc^r$, and there exists a $\delta(r,\Theta)$, such that for any $F\in \RectFunc^{r},$
\begin{equation}|\lh(F)|_r\leq \delta(r,\Theta)|F|_r,											\label{MainEstimateonNoodleSep28}
\end{equation}
and $\delta(r,\Theta)\rightarrow 0$, when we fix $r$ and let $\Theta\rightarrow \infty.$
\end{lemma}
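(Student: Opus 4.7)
The plan is to exploit the fact that $\Str(F)$ vanishes on the long central portion of $\tcC$ (where $|\theta|\leq\Theta$), so we are effectively in a Phragm\'en--Lindel\"of situation: $H:=\prodh(\Str(F))$ is harmonic in the $\tau$-variables on $\trR\times V$ with boundary data supported at $\theta$-distance at least $\Theta-2$ from $\Rectangle$. The central goal is to establish an $L^\infty$-bound of the form $|H|_{L^\infty(\Rectangle\times V)}\leq Ce^{-\pi\Theta}|F|_0$, and then to upgrade this to a full $C^r$-estimate with the exponential factor preserved.

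First I would prove the base $L^\infty$-decay estimate fiberwise in $z\in V$. On the subrectangle $R_\Theta:=(0,1)\times(-\Theta,\Theta)\subset\trR$ the function $H(\cdot,z)$ vanishes on the horizontal sides (since $\Str(F)=0$ there), while on the vertical sides $\{\theta=\pm\Theta\}$ one has $|H|\leq |F|_0$ from the global maximum principle on $\trR$. Comparing with the explicit harmonic barrier $G(t,\theta)=|F|_0\cdot\cosh(\pi\theta)/\cosh(\pi\Theta)$ and applying the maximum principle on $R_\Theta$ yields $|H(t,\theta,z)|\leq Ce^{-\pi\Theta}|F|_0$ for $|\theta|\leq 2$, uniformly in $z$. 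Since $\triangle_\tau$ involves only $\tau$-derivatives, this fiberwise argument applies verbatim to $D_z^\beta H$ with $\Str(F)$ replaced by $D_z^\beta\Str(F)$, yielding pointwise exponential decay for every transversal derivative up to order $[r]$.

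Next I would promote the $L^\infty$-decay into $C^r$-regularity. Choose a slightly enlarged $\Rectangle'\supset\Rectangle$ (still lying in $\{|\theta|\leq\Theta-1\}$), on which the same decay $|D_z^\beta H|_{L^\infty(\Rectangle'\times V)}\leq Ce^{-c\Theta}|F|_r$ holds. On $\Rectangle$ the function $H$ is harmonic in $\tau$ with zero boundary data on $\tcC\cap\Rectangle'$, so interior and boundary Schauder estimates for $\triangle_\tau$ bound the $\tau$-direction $C^r$-norms by the $L^\infty$-norm on $\Rectangle'$, with constants depending only on $r$ and the geometry (\emph{not} on $\Theta$, precisely because $\Rectangle'$ sits a fixed distance away from the support of the boundary data). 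Assembling the tangential and transversal estimates into a single joint H\"older norm is then carried out via Lemma~\ref{lem:FamilyofPossionEquationwithBernsteinandExtension20180315} (whose proof applies a Bernstein-type joint-H\"older principle to directional estimates of exactly this form), giving $|\lh(F)|_{r;\Rectangle\times V}\leq C_re^{-c\Theta}|F|_r$.

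The main obstacle is ensuring that the exponential decay factor $e^{-c\Theta}$ survives through both the Schauder upgrade and the joint H\"older assembly; this is what forces the intermediate rectangle $\Rectangle'$ to be chosen with a $\Theta$-independent margin from both $\Rectangle$ and from the support of $\Str(F)$, so that the Schauder constants are independent of $\Theta$. A secondary technicality is handling the fractional part of $r$ together with the vanishing of boundary data on $\tcC\cap\Rectangle$, which requires using boundary (rather than purely interior) Schauder estimates for harmonic functions; but with the zero Dirichlet data on the relevant boundary pieces, this is routine.
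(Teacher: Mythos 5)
Your plan is essentially the paper's: use the fact that $\Str(F)$ vanishes on $\{|\theta|\le\Theta\}\times V$ to get a fiberwise $L^\infty$ decay for $H$ (and for each $z$-derivative) via a barrier; then upgrade the $\tau$-regularity on $\Rectangle$ by Schauder estimates over a $\Theta$-independent margin; and assemble the directional bounds via the Bernstein joint-H\"older principle already packaged in the proof of Lemma~\ref{lem:FamilyofPossionEquationwithBernsteinandExtension20180315}. The paper organizes exactly this as a one-variable decay statement (Lemma~\ref{lem:SingleStripContrctionSep28}) followed by the $z$-derivative and H\"older-difference arguments as you describe.

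However, the single step on which everything rests is wrong as written: your proposed barrier $G(t,\theta)=|F|_0\cosh(\pi\theta)/\cosh(\pi\Theta)$ is \emph{not} harmonic. One has $\triangle_\tau G=\pi^2 G>0$, so $G$ is strictly subharmonic, hence $G-H$ is superharmonic and the maximum principle controls its \emph{minimum} on the boundary, not its maximum; $G-H\ge 0$ on $\partial R_\Theta$ therefore does not yield $G-H\ge 0$ in the interior. This is not a formality: if $u$ solves $\triangle_\tau u=0$ on $(0,1)\times(-\Theta,\Theta)$ with $u=0$ on $\{t=0,1\}$ and $u=1$ on $\{\theta=\pm\Theta\}$, then $u=\sum_{k\ \mathrm{odd}}\frac{4}{k\pi}\sin(k\pi t)\,\cosh(k\pi\theta)/\cosh(k\pi\Theta)$ and $u(\tfrac12,0)\approx\frac{8}{\pi}e^{-\pi\Theta}$, which exceeds $G(\tfrac12,0)\approx 2e^{-\pi\Theta}$. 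To obtain a genuine harmonic product barrier you must couple a trigonometric factor of $t$ with the matching hyperbolic frequency in $\theta$, and that $t$-factor must stay bounded below on $[0,1]$ so that the comparison still holds on $\{\theta=\pm\Theta\}$ --- which forces a spectral rate strictly less than $\pi$. The paper uses $w=2\sin\!\left(\tfrac{\pi}{4}+\tfrac{\pi}{2}t\right)\cosh\!\left(\tfrac{\pi}{2}\theta\right)/\cosh\!\left(\tfrac{\pi}{2}\Theta\right)|F|_0$, which is harmonic, dominates $|F|_0$ on $\{\theta=\pm\Theta\}$ and $0$ on $\{t=0,1\}$, and gives $|H|\le Ce^{-\pi\Theta/2}|F|_0$ on $\Rectangle$. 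With this corrected barrier the rest of your outline goes through and reproduces the paper's proof; the slower rate $e^{-\pi\Theta/2}$ is still sufficient for $\delta(r,\Theta)\to 0$.
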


Lemma \ref{lem:onLongStripContractionMap} is a generalization of the following Lemma \ref{lem:SingleStripContrctionSep28}. Before presenting Lemma \ref{lem:SingleStripContrctionSep28}, we need to introduce some notations, for the convenience of presentation.

For $0\leq r\leq\infty$, define
\[\Noodle^r=\{f\in C^r(\Rectangle\torange \ER)\big|\ f|_{\{t=0,1\}}=0\}.\]
given $f\in \Noodle^0$, we can find $h\in C^0(\ctrR\torange\ER)$, satisfying
\[\left\{
\begin{array}{cc}
\triangle h=0, &\text{ in }\trR;\\
h=\Str(f), &\text{ on }\partial\trR.
\end{array}
\right.\]
$\Str$ was defined at (\ref{DefinitionofStrSep28}) on $\RectFunc$, but we can also
consider it as an operator on $\Noodle^0 $ in the obvious way.
Then we define
\[\tlh(f)=h|_\Rectangle.\]
According to basic harmonic function theory, we know $\tlh(f)$ is in $\Noodle^\infty$, and we should have the following estimate

\begin{lemmasubsection}\label{lem:SingleStripContrctionSep28}
For any $r\geq 0$, $\Theta>2$, there exists a $\tilde\delta(r,\Theta)>0$, such that
\begin{equation}
|\tlh(f)|_r\leq\tilde\delta(r,\Theta)|f|_0,
\end{equation}
for any $f\in\Noodle^r$
and  $\tilde\delta(r,\Theta)\rightarrow 0$, as we fix $r$ and let $\Theta\rightarrow \infty.$
\end{lemmasubsection}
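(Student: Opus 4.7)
The plan is to exploit the exponential decay of harmonic functions in an infinite strip when the Dirichlet data is concentrated far from the region of interest. The key geometric observation is that $\Str(f)$ is supported on $\tcC \cap \{|\theta| \geq \Theta - 1\}$, whereas $\Rectangle$ lies in $\{|\theta| \leq 2\}$. Thus on $\Rectangle$, the harmonic function $\tlh(f)$ depends only on boundary values located at $\theta$-distance at least $\Theta - 3$ from $\Rectangle$, and such distant data decay like $e^{-\pi |\theta|}$ into the strip (the rate being the first Dirichlet eigenvalue of $-\partial_t^2$ on the cross-section $[0,1]$).

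For the $L^\infty$ estimate, I first apply the maximum principle on $\trR$ to obtain $|\tlh(f)|_{L^\infty(\trR)} \leq |\Str(f)|_{L^\infty(\tcC)} = |f|_0$. On the sub-rectangle $R = \{0 < t < 1\} \cap \{|\theta| < \Theta - 1\} \subset \trR$, the function $\tlh(f)$ is harmonic, vanishes on $\{t=0,1\}$, and is bounded by $|f|_0$ on the vertical sides $\{\theta = \pm(\Theta-1)\}$. The explicit harmonic barrier $w(t,\theta) = |f|_0 \cdot \cosh(\pi \theta)/\cosh(\pi(\Theta-1))$ dominates $|\tlh(f)|$ on $\partial R$, so by the maximum principle
\[
|\tlh(f)(t,\theta)| \leq |f|_0 \cdot \frac{\cosh(\pi\theta)}{\cosh(\pi(\Theta-1))} \qquad \text{in } R.
\]
Restricting to $\Rectangle$ gives $|\tlh(f)|_{0;\Rectangle} \leq C e^{-\pi(\Theta - 3)} |f|_0$.

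For the $C^r$ estimate with $r > 0$, I use that $\tlh(f)$ vanishes on $\{t=0,1\} \cap \{|\theta| < \Theta\}$ and extend it by odd reflection across both $t=0$ and $t=1$ to a harmonic function $\widetilde{h}$ on $\{-1 < t < 2\} \cap \{|\theta| < \Theta - 1\}$; this extension satisfies the same $L^\infty$ bound derived above on any $\theta$-cross-section. Since $\Rectangle$ is compactly contained in this extended domain (with fixed, $\Theta$-independent distance to the boundary), interior derivative estimates for harmonic functions yield
\[
|\tlh(f)|_{r;\Rectangle} \leq C(r)\, |\widetilde{h}|_{L^\infty(\{-1 < t < 2\} \cap \{|\theta| < 3\})} \leq C(r)\, e^{-\pi(\Theta - 4)} |f|_0,
\]
which gives the conclusion with $\tilde\delta(r,\Theta) = C(r) e^{-\pi(\Theta - 4)}$, decaying to zero as $\Theta \to \infty$ for any fixed $r$.

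The only subtle point is ensuring that the constant $C(r)$ in the interior estimate is independent of $\Theta$; this is guaranteed because the distance from $\Rectangle$ to the boundary of the enlarged domain used for reflection and interior estimates (namely $\{-1<t<2\} \cap \{|\theta|<3\}$) is a fixed positive number, and reflecting across two parallel lines produces a harmonic function whose $L^\infty$ norm is controlled by that of the original. No nontrivial analytic obstacle arises beyond setting up the barrier and bookkeeping the reflection; the entire content of the lemma is the exponential-in-$\Theta$ smallness, which the barrier delivers directly.
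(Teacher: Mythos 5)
Your overall strategy — exponential decay of harmonic functions in a long strip via a barrier, followed by odd reflection across $\{t=0,1\}$ and interior estimates to upgrade to $C^r$ — is exactly the paper's plan. But the barrier you wrote down does not do the job: $w(t,\theta)=|f|_0\,\cosh(\pi\theta)/\cosh(\pi(\Theta-1))$ is not harmonic. In fact $\Delta w = \pi^2\,|f|_0\,\cosh(\pi\theta)/\cosh(\pi(\Theta-1)) > 0$, so it is strictly \emph{sub}harmonic, and the maximum principle gives no upper bound of the form $|\tlh(f)|\le w$. To make a comparison function of this type work you need a $t$-dependent factor that makes it harmonic while staying strictly positive on $\{t=0,1\}$ (recall that on the vertical sides $\{\theta=\pm(\Theta-1)\}$ all you know is the bound $|\tlh(f)|\le |f|_0$, with no smallness near $t=0,1$). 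The paper's choice is $w=2\sin(\tfrac{\pi}{4}+\tfrac{\pi}{2}t)\cdot\frac{\cosh(\frac{\pi}{2}\theta)}{\cosh(\frac{\pi}{2}\Theta)}\,|f|_0$, which is harmonic, satisfies $w\ge \sqrt{2}\,|f|_0$ for $|\theta|\ge\Theta$ and $w>0$ on $\{t=0,1\}$, and hence dominates $|\Str(f)|$ on all of $\tcC$; this yields $|\tlh(f)|_{0;\Rectangle}\le C e^{-\pi\Theta/2}|f|_0$.

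A secondary point: with a harmonic barrier of the form $\cos(\alpha(t-\tfrac12))\cosh(\alpha\theta)$ you need $\alpha<\pi$ (otherwise the $t$-factor vanishes at the endpoints, contradicting the need to dominate a constant there), so your claimed rate $e^{-\pi(\Theta-3)}$ is not reachable by this method; the sharp rate $e^{-\pi\Theta}$ would require a Fourier-series argument rather than a single barrier. This does not affect the statement, since any exponential decay suffices for $\tilde\delta(r,\Theta)\to 0$. Once the $L^\infty$ bound is repaired, your reflection-plus-interior-estimate step is correct, and the observation that $\Rectangle$ sits at a fixed, $\Theta$-independent distance from the boundary of $\{-1<t<2\}\cap\{|\theta|<3\}$ is exactly what makes the constant $C(r)$ uniform.
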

The proof is standard harmonic function theory, we can first control $|\tlh(f)|_0$ by using the following barrier function
\[w=2\sin(\frac{\pi}{4}+\frac{\pi}{2}t)\cdot \frac{e^{{\pi\over 2}\theta}+e^{-{\pi\over 2}\theta}}{e^{{\pi\over 2}\Theta}+e^{-{\pi\over 2}\Theta}}|f|_0,\]
then higher order estimates follow easily.

Now we can prove Lemma \ref{lem:onLongStripContractionMap} with Lemma \ref{lem:SingleStripContrctionSep28}.
The proof is similar to proof of Lemma \ref{lem:FamilyofPossionEquationwithBernsteinandExtension20180315}, but here we have better control on $\tau$-direction regularity, because here we only concern the regularity in $\Rectangle\times V$.

First, suppose that $\overline{\Support(f)}$ is contained in $B\subset V$, where there is a smooth map $\varphi: B\rightarrow  \EC^n$.
We denote the coordinates on $B$ by $\{z^\alpha\}_{\alpha=1}^{n}$.

For $j\in \EN$, $j\leq [r]$, we have that any $j$-th order manifold-direction-derivative of $H$, $D_z^jH$, satisfies:
\[\left\{
\begin{array}{cc}
\triangle_{\tau}(D_z^j H)=0, &\text{ in }\trR\times B;\\
D_z^j H=D_z^j F, &\text{ on }\partial \trR\times B.
\end{array}
\right.\]

So, we have, for any $k\geq 0$, there exists a $\tilde\delta(k,\Theta)$, such that
\begin{equation}|D_\trR^kD_z^j\lh(F)|_0\leq \tilde\delta(k,\Theta)|D_z^j F|_0\leq \tilde\delta(k,\Theta)|F|_{[r]},		\label{NoodleDirectionInfiniteControl}
\end{equation}
with $\tilde\delta(k,\Theta)\rightarrow 0$ as $\Theta\rightarrow \infty$.

And, for $r\notin \EZ, $ given $x, y\in B$, define
\[p=D_z^{[r]}H(x, \spacecdot)-D_z^{[r]}H(y, \spacecdot),\]
then $p$ satisfies
\[\left\{
\begin{array}{cc}
\triangle_\tau p=0, &\text{ in }\trR\times B;\\
|p|\leq |F|_{r}|x-y|^{r-[r]}, &\text{ on }\partial \trR\times B.
\end{array}
\right.\]

So, with Lemma \ref{lem:SingleStripContrctionSep28}, we can find  $\tilde\delta(r,\Theta)$, such that
\begin{equation}
\left|D_\trR^{[r]}\tilde\lh(F)(x,\spacecdot)-D_\trR^{[r]}\tilde\lh(F)(y,\spacecdot)\right|=\left|p|_{\Rectangle\times B}\right|\leq \tilde\delta(r,\Theta)|x-y|^{r-[r]}|F|_{r}.  							\label{HoldNoodleTogetherforHolder}
\end{equation}
with $\tilde \delta(r,\Theta)\rightarrow 0$ as $\Theta\rightarrow \infty$. Estimates of other
derivatives are easier, so for $F$ with support contained in a coordinate ball, (\ref{MainEstimateonNoodleSep28}) has been proved by combining (\ref{NoodleDirectionInfiniteControl}) (\ref{HoldNoodleTogetherforHolder}) together.
Now the global version follows easily.


\section{A Version of Moser's Inverse Function Theorem}\label{MoserTheorem}

Similar to \cite{HormanderonNashMoser}, our version of Moser's inverse function theorem will be presented in an abstract setting.

Let $\{\sF^\varrho\}_{\varrho\in[0,\infty)}$ be a family of Banach spaces,
with inclusion $\sF^{\varrho_1}\subset \sF^{\varrho_2}$, for $0\leq\varrho_1\leq \varrho_2$.
Furthermore, set $\sF^\infty=\cap_\varrho\sF^\varrho$
and assume that there exists a smoothing operator
 $S_Q:\sF^0\rightarrow\sF^\infty$, for any $Q\geq1$ such that for any $u\in \sF^\varrho$, with $\nu,  \varrho\leq U$, we have
\begin{align}
|S_Qu|_\nu\leq C(U)Q^{\nu-\varrho}|u|_\varrho, \ \text{ if } \nu\geq \varrho;			\label{MoserAbstractSettingBlowUpofSmoothing}\\
|S_Qu-u|_\nu\leq C(U)Q^{\nu-\varrho}|u|_\varrho, \ \text{ if }\nu\leq \varrho.			\label{MoserAbstractSettingConvergenceofSmoothing}
\end{align}
\begin{remark}\label{inrepIneqAbstrRmk}
(\ref{MoserAbstractSettingBlowUpofSmoothing}) and (\ref{MoserAbstractSettingConvergenceofSmoothing}) would imply for any $\kappa\leq\nu\leq \varrho\leq U$,
\begin{equation*}
|u|_\nu^{{\varrho-\kappa}}\leq C(U)|u|_\kappa^{{\varrho-\nu}}|u|_\varrho^{{\nu-\kappa}},
\end{equation*}
as explained in \cite{HormanderonNashMoser}.
\end{remark}

\begin{lemma}\label{lem:MoserTheorem}

Assume, that some fixed numbers $r,\ B,\ b,\ \chi,\ l,\ \alpha\in \ER^+$ satisfy:
\begin{equation}
{r}>B>B-\alpha>b>l\geq1,
	\label{IndexOrderMoserTheorem}
\end{equation}
\begin{equation}
\frac{B}{\chi}>\frac{\left(1+\frac{2l}{r}\right)^3(r-B)}{r-2l-\chi},		\label{BGreaterthanchi}
\end{equation}
\begin{equation}
\frac{B}{r-B}<1+\frac{2l}{r},        \label{MC_130_20180220_1752}
\end{equation}
\begin{equation}\label{labelB6-feldm}
\frac{B(r-b)}{b(r-B)}>\left(1+{2l\over r}\right)^2,
\end{equation}
\begin{equation}
\frac{r^3(r+l-B+\alpha)}{(r+2l)^3(r-B)}>\frac{B-\alpha}{B}\label{ConvergeWRTBalphaNorm}.
\end{equation}
And assume
 there is a map $\sP$ satisfies the following conditions, with
the parameters introduced above, and some fixed $C_0>1$:

\begin{description}
\item[Condition 1. ]
For some $\epsilon>0$, $\sP$ is defined in
\[\sN^b=\{f\in \sF^b\mid |f|_b<\epsilon\},\]
and  $\sP$ maps
$\sN^b$ into $\sF^{b-l},$ and maps
\[\sN^{r+l}=\sF^{r+l}\cap \sN^b,\]
into   $\sF^{r}$,
with
\begin{equation}
|\sP(f)|_r\leq C_0|f|_{r+l}. \label{CrEstimateofsPMoserTheorem}
\end{equation}
Note, in particular, that this implies
 \[\sP(0)=0.\]
\item[Condition 2. ]
$\sP$ is ``Lipschitz", in the sense that, for any $f_1, f_2\in\sN^b$,
\begin{equation}
|\sP(f_1)-\sP(f_2)|_0\leq C_0|f_1-f_2|_b.\label{LipEstimateofsPMoserTheorem}
\end{equation}

\item[Condition 3. ]
$\sP$ is  ``differentiable'', in the sense that there exists a map
\[D\sP\,:\,\sN^{r+l}\times\sF^{r+l}\rightarrow \sF^r,\]
which is linear with respect to second variable,
and for $f\in \sN^{r+l}, v\in \sF^{r+l}$, with $|v|_b$ small enough such that
$f+v\in \sN^{r+l}$, we have
\begin{equation}
|\sP(f+v)-\sP(f)-D\sP(f,v)|_0\leq C_0|v|_0^{2-{\chi\over r}}\left(|v|_{r+l}+(1+|f|_{r+l})|v|_{b}\right)^{\chi\over r}.
\label{SecondOrderEstimateMoserTheorem}
\end{equation}In the following, we denote  $D\sP_f(v)=D\sP(f, v)$.
\item[Condition 4. ]
For any $f\in \sN^{r+l}$, the map
 $D\sP_f$ can be extended to a linear map from $\sF^r$ to $\sF^r$  satisfying, for any $v\in \sF^r$
\begin{equation}
|D\sP_f(v)|_0\leq C_0|v|_0.\label{DsPC0EstimateMoserTheorem}
\end{equation}
Moreover,
 as a map from $\sF^r$ to $\sF^r$, $D\sP_f$ is invertible, with inverse satisfying:
\begin{equation}|D\sP^{-1}_f(v)|_r\leq C_0[(|f|_{r+l}+1)|v|_b+|v|_r],\label{InverseCrEstimateofDsPMoserTheorem}
\end{equation}
\begin{equation}
|D\sP_f^{-1}(v)|_0\leq C_0|v|_0.\label{InverseC0EstimateofDsPMoserTheorem}
\end{equation}
\end{description}

Then, there exist positive constants $\delta$ and $C$, depending only
on $r,B,b,\chi,l,\alpha,  C_0$, such that for any $h\in \sF^B$ with $|h|_B<\delta$,
there exists $f\in \sF^{B-\alpha}$ solving
\[\sP(f)=h.\]
Moreover, $f$ satisfies
\begin{equation}
|f|_{B-\alpha}\le C|h|_B.				
\label{NormEstimateinMoserTheoremOct10}
\end{equation}
\end{lemma}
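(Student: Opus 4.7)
The plan is to run a modified Newton iteration with smoothing, in the spirit of Hörmander's exposition of the Nash--Moser theorem. Set $f_0 = 0$, pick $\kappa > 1$ and $Q_0 \ge 1$ large (both to be determined by $r, B, b, \chi, l, \alpha$), and define
\[
Q_n = Q_0^{\kappa^n}, \qquad f_{n+1} = f_n + v_n, \qquad v_n := S_{Q_n}\, D\sP_{f_n}^{-1}(h - \sP(f_n)),
\]
provided $f_n \in \sN^{r+l}$ so that everything is defined. The target exponent $\kappa$ should be chosen so that $\kappa < 1 + 2l/r$ (allowed by (\ref{MC_130_20180220_1752})) in order to control the growth of $|f_n|_{r+l}$ by $Q_n^{2l}$ via the smoothing estimate (\ref{MoserAbstractSettingBlowUpofSmoothing}), while being large enough that the Newton-type quadratic convergence overcomes the smoothing losses. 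The goal is to propagate by induction three estimates of the form
\begin{align*}
&\text{(i)}\quad |f_n|_{r+l} \;\le\; Q_n^{\,2l}, \\
&\text{(ii)}\quad |h - \sP(f_n)|_0 \;\le\; Q_n^{-r}, \\
&\text{(iii)}\quad |f_n|_b \;\le\; \tfrac12\epsilon \;\; \text{(so } f_n \in \sN^b \text{ and } \sP \text{ is defined).}
\end{align*}

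The inductive step goes as follows. From (i)--(ii) and Condition~4, $w_n := D\sP_{f_n}^{-1}(h-\sP(f_n))$ satisfies $|w_n|_0 \le C_0 |h - \sP(f_n)|_0$ and $|w_n|_r \le C_0(|f_n|_{r+l} + 1)|h-\sP(f_n)|_b + C_0|h-\sP(f_n)|_r$, where the missing intermediate norms are controlled by the interpolation inequality from Remark~\ref{inrepIneqAbstrRmk}. Then $v_n = S_{Q_n} w_n$ is estimated by the smoothing properties. The new residual splits as
\[
h - \sP(f_{n+1}) \;=\; \underbrace{(h - \sP(f_n) - D\sP_{f_n}(v_n))}_{\text{Newton-type error}} + \underbrace{D\sP_{f_n}(w_n - v_n)}_{\text{smoothing error}},
\]
where $h - \sP(f_n) - D\sP_{f_n}(v_n) = (\sP(f_n+v_n) - \sP(f_n) - D\sP_{f_n}(v_n)) - (\sP(f_{n+1}) - \sP(f_n + v_n))$ is bounded by (\ref{SecondOrderEstimateMoserTheorem}) (using $f_{n+1} = f_n + v_n$ this second piece vanishes). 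One estimates (\ref{SecondOrderEstimateMoserTheorem}) via interpolation with $|v_n|_0 \le C|h-\sP(f_n)|_0$ and $|v_n|_{r+l} \le C Q_n^{l}(1 + |f_n|_{r+l})|h-\sP(f_n)|_0^{(r-b)/r}$ etc., and the smoothing error $|D\sP_{f_n}(I-S_{Q_n})w_n|_0 \le C_0 Q_n^{-r}|w_n|_r$ by (\ref{MoserAbstractSettingConvergenceofSmoothing}) and (\ref{DsPC0EstimateMoserTheorem}). Conditions (\ref{BGreaterthanchi}) and (\ref{labelB6-feldm}) are precisely what makes the product exponents on $Q_n$ in both error terms strictly smaller than $-r\kappa$, so that $|h - \sP(f_{n+1})|_0 \le Q_{n+1}^{-r}$. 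The growth of $|f_{n+1}|_{r+l}$ is controlled by $\sum_{j \le n} |v_j|_{r+l}$ via (\ref{MoserAbstractSettingBlowUpofSmoothing}) together with (\ref{MC_130_20180220_1752}), which gives (i) at the next step; (iii) follows because the series $\sum |v_j|_b$ converges (by interpolation between $|v_j|_0 \le Q_j^{-r}$ and (i)) and is small once $Q_0$ is large and $|h|_B$ is small, with the induction base provided by $f_0 = 0$ and $|h|_0 \le |h|_B \le \delta$.

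For convergence and the final regularity claim, estimate $|v_n|_{B-\alpha}$ by interpolation between $|v_n|_0 \le C Q_n^{-r}$ and $|v_n|_{r+l} \le C Q_n^{2l}$ (or the slightly sharper bound from (\ref{labelB6-feldm})). Inequality (\ref{ConvergeWRTBalphaNorm}) is exactly the statement that the resulting exponent of $Q_n$ is negative, so $\sum_n |v_n|_{B-\alpha} < \infty$, yielding $f := \lim f_n \in \sF^{B-\alpha}$ with $|f|_{B-\alpha} \le C|h|_B$. The continuity from Condition~2 (Lipschitz in the $C^b \to C^0$ sense) combined with (ii) then gives $\sP(f) = h$. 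The main obstacle is bookkeeping: verifying that the five arithmetic conditions (\ref{IndexOrderMoserTheorem})--(\ref{ConvergeWRTBalphaNorm}) are exactly what make both the quadratic/smoothing error bounds close the induction and the series $\sum|v_n|_{B-\alpha}$ converge, with no slack to spare -- in particular the choice $\kappa$ must lie in a narrow window determined by (\ref{MC_130_20180220_1752}) and (\ref{BGreaterthanchi}) simultaneously.
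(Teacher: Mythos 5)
Your scheme has a genuine gap at the very first step: the increment
\[
v_n := S_{Q_n}\, D\sP_{f_n}^{-1}\bigl(h - \sP(f_n)\bigr)
\]
is not well-defined. By Condition~4, the inverse $D\sP_{f_n}^{-1}$ is available only as a map from $\sF^r$ to $\sF^r$. You have $\sP(f_n)\in\sF^r$ by Condition~1, but the datum $h$ is given only in $\sF^B$, and since $B<r$ by (\ref{IndexOrderMoserTheorem}) the space $\sF^r$ is \emph{smaller} than $\sF^B$; so $h\notin\sF^r$ in general, hence $h-\sP(f_n)\notin\sF^r$, and $D\sP_{f_n}^{-1}$ cannot be applied to it. Your subsequent estimate $|w_n|_r\leq C_0(|f_n|_{r+l}+1)|h-\sP(f_n)|_b+C_0|h-\sP(f_n)|_r$ shows the same problem: the quantity $|h-\sP(f_n)|_r$ is undefined, and no amount of interpolation can manufacture it, because $h$ carries no $\sF^r$-information. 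Smoothing the \emph{output} $D\sP_{f_n}^{-1}(\cdot)$, as you do, comes too late; the inverse has already acted.

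The paper's proof repairs this by introducing a second smoothing sequence: it sets $h_n=S_{N_n}h\in\sF^\infty\subset\sF^r$ and at step $n$ targets $h_n$ rather than $h$ itself, so that $h_n-\sP(f_n)\in\sF^r$ and $F_n=D\sP_{f_n}^{-1}(h_n-\sP(f_n))$ is legitimate, and then smooths the increment as well, $v_n=S_{M_n}F_n$, to keep $f_{n+1}\in\sF^\infty$. This ``feeding in the smoother parts first'' (Nash's phrase, quoted there) requires an extra inductive estimate $|h_n-h|_0\to 0$ and a choice of two exponentially growing parameter sequences $N_n,M_n$ in place of your single $Q_n$. It is not bookkeeping but the essential ingredient that makes the iteration meaningful: without it the algorithm never gets off the ground. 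If you reformulate your iteration with the pre-smoothed data $h_n$ and a separate smoothing for the Newton increment, the remainder of your outline (quadratic error plus smoothing error, interpolation, and the same five arithmetic conditions governing $\kappa$ and the exponents) follows the same path as the paper.
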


\begin{remark}In (\ref{IndexOrderMoserTheorem}), ``$\geq 1$" is not essential, just for convenience of computation, actually ``$>0$" will be enough. And in our application we will let $l=2$.\end{remark}
\begin{remark} Continuity of $D\sP_f(v)$, with respect to first variable is not required, but weak continuity:
\begin{equation}
|D\sP_{f_1}(v)-D\sP_{f_2}(v)|_0\leq C \max\{|f_1-f_2|_b^{1\over 2}|v|_0^{1-{\chi\over 2r}}(|v|_{r+l}+(1+|f|_{r+l})|v|_{b})^{\frac{\chi}{2r}}, |f_1-f_2|_b\}
\end{equation}
can be derived from (\ref{SecondOrderEstimateMoserTheorem}) (\ref{DsPC0EstimateMoserTheorem}) and (\ref{LipEstimateofsPMoserTheorem}).
\end{remark}

{\flushleft\em Proof of Lemma \ref{lem:MoserTheorem}:}

Instead of directly targeting $h$, we choose a sequence of
smooth approximations of $h$ in $\sF^r$,
\[h_n=S_{N_n}h,\]
with $N_n$ to be determined.
As explained by J. Nash in \cite{NashImbedding}, the plan is:`` ... `feeding in' the smoother parts ... first, saving the rougher parts for later."
 By
(\ref{MoserAbstractSettingBlowUpofSmoothing}) and			
(\ref{MoserAbstractSettingConvergenceofSmoothing}),
these approximations satisfy
\begin{equation}
|h_n|_r\leq C|h|_BN_n^{r-B},			 \label{BlowuphwithNn}
\end{equation}
\begin{equation}
|h_n-h|_0\leq C|h|_BN_n^{-B},		\label{ApproximatehwithNn}
\end{equation}
where the constant $C$ is $C(r)$ from
(\ref{MoserAbstractSettingBlowUpofSmoothing}),		
(\ref{MoserAbstractSettingConvergenceofSmoothing}).

We will then construct a sequence $f_n\in \sF^{r+l}$,
in such way that, as $n\rightarrow \infty$,
\[|\sP(f_n)-h_n|_0\rightarrow 0.\]
At each step of iteration we correct $f_n$ by adding a smooth
approximation of $D\sP_{f_n}^{-1}(h_n-\sP(f_n))$.
Denote
\begin{equation}
F_n:=D\sP_{f_n}^{-1}(h_n-\sP(f_n)),\label{DefofFnSep8}
\end{equation}
and its smooth approximation
\begin{equation}
\label{DefofFnSep8-Approx}
v_n:=S_{M_n}(F_n),
\end{equation}
with $M_n$ to be determined.

Let
\begin{align}
f_1&=0,\\
f_{i+1}&=f_{i}+v_i, \text{ for } i\in \EZ^+,\label{inductiondefinitionoff_201803282347}
\end{align}
for $v_i$ defined by (\ref{DefofFnSep8-Approx}).
We show that, for some specifically chosen $0<\mu<1, K>1, A>>1, \lambda>0$, we can choose $M_i, N_i$ at each step of iteration, to make, for all $i\in \EZ^+$,
\begin{equation}
|h_i|_r\leq \mu e^{AK^i},											\label{Blowuph}
\end{equation}
\begin{equation}
|h_i-h|_0\leq{1\over 3}\mu e^{-\lambda AK^{i+1}},					\label{Approximateh}
\end{equation}
\begin{equation}
|f_i|_{r+l}\leq \mu e^{AK^i}	,									\label{CrlBdoffi}
\end{equation}
\begin{equation}
|\sP(f_i)-h_i|_0\leq \mu e^{-\lambda AK^i},							\label{C0EstimateofDistancetohi}
\end{equation}
\begin{equation}
|f_i|_b\leq\epsilon,										\label{fiTrappedinNeighborhood}
\end{equation}
and
\begin{equation}
\{f_n\}\text{ is a Cauchy sequence with respect to $C^{B-\alpha}$ norm.} 		 \label{Ckalphaconvergenceoffn}\end{equation}
	
We note that (\ref{fiTrappedinNeighborhood}) is equivalent to that $f_i$ stays in $\sN_{r+l}$, for all $i\in \EZ^+$.
To make (\ref{Blowuph})--(\ref{Ckalphaconvergenceoffn}) valid, we will specify
the sufficient conditions which involve only  $|h|_B$ and
the parameters
 $r$, $ B$, $ b$, $ l$, $ \chi$, $\alpha$, $\lambda$, $ \mu$, $ K$, $ A$,
 $ M_i$, $ N_i$,
 and $C_0$. Putting all requirements together, we find that if
 $r,\ B,\ b,\ l,\ \chi, \ \alpha$ satisfy
 (\ref{IndexOrderMoserTheorem})-(\ref{ConvergeWRTBalphaNorm}), then we can
 find $\lambda,\ \mu, \ K,\ A,$ such that at each step of iteration, we can
 choose $M_n$, $N_n$ so that (\ref{Blowuph})-(\ref{Ckalphaconvergenceoffn})
 are satisfied, and in order to have (\ref{fiTrappedinNeighborhood}) satisfied,
 we need $|h|_B$ small enough  in addition to the previous requirements.
 Also, constants $C$ below may depend on the parameters
  $r,\ B,\ b,\ \chi,\ l,\ \alpha$, and satisfy $C\ge 1$.

Our argument will be in 5 steps. From Step 1 to Step 5, after each requirement
is stated, we assume it is satisfied until the end of Step 5.
Then after Step 5, we show how to guarantee the validity of all requirements.

{\flushleft\bf  Step 1.} To make (\ref{Blowuph}) and (\ref{Approximateh}) valid,
we use (\ref{BlowuphwithNn})--(\ref{ApproximatehwithNn}), and
below $C$ is the constant from these estimates.
Then
 we require
\begin{equation}|h|_B\leq \mu,															\label{LowerBoundofmubyhB}
\end{equation}
and, for all $i\in \EZ^+$,
\begin{equation}
(3C)^\frac 1B e^{\frac{\lambda K}{B}AK^i}\leq N_i\leq
{1\over C^\frac 1{r-B}}e^{AK^i{1\over r-B}}.				\label{IntervalofNi}
\end{equation}

Then, by (\ref{IntervalofNi}), to make sure there is space left for $N_i$, we require
\begin{equation}
 e^{AK^i(\frac{1}{r-B}-\frac{\lambda K}{B})}\geq 3^\frac 1B C^{\frac 1B+\frac 1{r-B}}.								\label{LowerBoundofAfromh}
\end{equation}


{\flushleft\bf  Step 2.} Here we show how to guarantee (\ref{CrlBdoffi}) and
(\ref{C0EstimateofDistancetohi}) at $i=1$. When $i=1$,
\[f_1=\sP(f_1)=0,\]
so we only need to satisfy  (\ref{C0EstimateofDistancetohi}), which becomes now
\begin{equation}
|h_1|_0\leq \mu e^{-\lambda AK}.
\end{equation}
Using (\ref{Approximateh}), we obtain
\[|h_1|_0\leq |h|_0+|h_1-h|_0\leq |h|_0+\frac{1}{3}\mu e^{-\lambda AK^2},\]
so we require
\begin{equation}
2|h|_0e^{\lambda AK}\leq \mu.						\label{LowerBoundofmubyh0}
\end{equation}

{\flushleft\bf  Step 3.}
Now suppose at $i=n$, conditions
(\ref{CrlBdoffi}) and (\ref{C0EstimateofDistancetohi}) are satisfied, i.e.
\begin{equation}
|f_n|_{r+l}\leq \mu e^{AK^n}, 						\label{InductionLeftf}
\end{equation}
\begin{equation}
|\sP(f_n)-h_n|_0\leq \mu e^{-\lambda AK^n}.		\label{InductionLeftDistancetoh}
\end{equation}
Note that $f_{n+1}=f_{n}+v_n$ by (\ref{inductiondefinitionoff_201803282347}),
where $v_n$ is
given by (\ref{DefofFnSep8})--(\ref{DefofFnSep8-Approx}). We show how to choose $M_n$ such that
\begin{equation}
|f_{n+1}|_{r+l}\leq \mu e^{AK^{n+1}}, 						\label{InductionRightf}
\end{equation}
\begin{equation}
|\sP(f_{n+1})-h_{n+1}|_0\leq \mu e^{-\lambda AK^{n+1}}.	\label{InductionRightDistancetoh}
\end{equation}

We will first analyse (\ref{InductionRightf}) and then
(\ref{InductionRightDistancetoh}).
 To estimate  $|f_{n+1}|_{r+l}$,  we need to derive the estimates on $h_n-\sP(f_n)$ and $v_n$.

By (\ref{Blowuph}) (\ref{InductionLeftf}) and (\ref{CrEstimateofsPMoserTheorem}), we have
\begin{equation}
|h_n-\sP(f_n)|_r\leq\mu e^{AK^n}+C_0\mu e^{AK^n}\leq 2C_0\mu e^{AK^{n}}.
\end{equation}
Combining this estimate with interpolation inequality in Remark \ref{inrepIneqAbstrRmk}
and (\ref{InductionLeftDistancetoh}), we get
\begin{align}
|h_n-\sP(f_n)|_b&\leq C |h_n-\sP(f_n)|_r^{b\over r}\cdot|h_n-\sP(f_n)|_0^{1-\frac{b}{r}}\nonumber\\
				&\leq2CC_0\mu e^{AK^n\left(\frac{b}{r}-\lambda(1-\frac{b}{r})\right)},
\end{align}
where $C$ depends only on $r$. Thus
\begin{equation}
|h_n-\sP(f_n)|_b\leq 2CC_0\mu,
\end{equation}
provided
\begin{equation}
\lambda>\frac{b}{r-b}.  					\label{LowerBoundofLambdaSTAR1}
\end{equation}

Then with (\ref{InverseCrEstimateofDsPMoserTheorem}) and (\ref{DefofFnSep8}), we can estimate $|F_n|_r$ as
\begin{align}
|F_n|_r&=|D\sP^{-1}_{f_n}(h_n-\sP(f_n))|_r\nonumber\\
		&\leq C_0((|f_n|_{r+l}+1)|h_n-\sP(f_n)|_b+|h_n-\sP(f_n)|_r)\nonumber\\
		&\leq C_0((\mu e^{AK^n}+1)2CC_0\mu+2C_0\mu e^{AK^n})\nonumber\\
		&\leq 6C_0^2 C\mu e^{AK^n} .
\end{align}
Then $v_n=S_{M_n}(F_n)$ has the following estimates by
(\ref{MoserAbstractSettingBlowUpofSmoothing}) and			
(\ref{MoserAbstractSettingConvergenceofSmoothing}):
\begin{equation}
|v_n|_{r+l}\leq 6C_0^2C\mu e^{AK^n}M_n^l, 		\label{CrlBdofvnInitialForm}
\end{equation}
\begin{equation}
|v_n-F_n|_{0}\leq 6C_0^2C\mu e^{AK^n}M_n^{-r}.			\label{ApproximateFn}
\end{equation}

By (\ref{inductiondefinitionoff_201803282347}), the  induction hypothesis
(\ref{InductionLeftf}), and (\ref{CrlBdofvnInitialForm}), we have
\begin{align}
|f_{n+1}|_{r+l}&\leq |f_n|_{r+l}+|v_n|_{r+l}
					\leq \mu e^{AK^n}+6C^2_0C\mu e^{AK^n}M_n^l.			\label{CrlBdoffn+1InitialFormAugustSixth}
\end{align}
To satisfy (\ref{InductionRightf}), we need to have
\begin{equation}
\text{Right-Hand Side of } (\ref{CrlBdoffn+1InitialFormAugustSixth}){\leq} \mu e^{AK^{n+1}}.	\label{Requirementonfn1rl}	
\end{equation}	
This estimate is satisfied if we require
\begin{equation}
 e^{A(K-1)}\geq 2,											\label{WeakLowerBdofA}
\end{equation}
and
\begin{equation}
M_n^l\leq \frac{1}{12C_0^2C} e^{AK^n(K-1)}. 
\label{UpperBdofMn}
\end{equation}

To estimate the left hand side  of (\ref{InductionRightDistancetoh}), we need to
first  estimate $|v_n|_0$ using
(\ref{DefofFnSep8-Approx}).  From  (\ref{InverseC0EstimateofDsPMoserTheorem})
and (\ref{InductionLeftDistancetoh}) we get
\[|F_n|_0\leq C_0|h_n-\sP(f_n)|_0\leq C_0\mu e^{-\lambda AK^n},\]
then, with (\ref{ApproximateFn}), we obtain
\begin{align}
|v_n|_0\leq |F_n|_0+|v_n-F_n|_0\leq C_0\mu e^{-\lambda AK^n}+6C_0^2 C\mu e^{AK^n}M_n^{-r}.\label{C0BdofvnInitialForm}
\end{align}

We estimate $|h_{n+1}-\sP(f_{n+1})|_0$ as

\begin{align}
	|h_{n+1}-\sP(f_{n+1})|_0&\leq|h_{n+1}-h_{n}|_0+|\underbrace{h_n-\sP(f_n)-D\sP_{f_n}(F_n)}_{=0, \ \text{ by definition}}|_0+|D\sP_{f_n}(F_n-v_n)|_0\nonumber\\
&\ \ \ +|\sP(f_n)+D\sP_{f_n}(v_n)-\sP(f_{n+1})|_0.  \label{Link_2018_02_20_1737_MCTY}
\end{align}
We have the following estimates for each term on the right hand side of (\ref{Link_2018_02_20_1737_MCTY}):
\begin{itemize}
\item by (\ref{Approximateh})
\begin{equation}
|h_n-h_{n+1}|_0\leq \frac{2}{3}\mu e^{-\lambda AK^{n+1}},
\end{equation}
\item by (\ref{DsPC0EstimateMoserTheorem}) and (\ref{ApproximateFn})
\begin{equation}
|D\sP_{f_n}(F_n-v_n)|_0\leq C_0|F_n-v_n|_0\leq 6 C_0^3 C\mu e^{AK^n} M_n^{-r},\label{DsPApproximateFnInitialForm}
\end{equation}
\item by (\ref{SecondOrderEstimateMoserTheorem}),
\begin{align}
|\sP(f_n)+&D\sP_{f_n}(v_n)-\sP(f_{n+1})|_0\leq C_0|v_n|_0^{2-\frac{\chi}{r}}(|v_n|_{r+l}+(1+|f_n|_{r+l})|v_n|_{b})^{\chi\over r}
.\label{SecondOrderRequirementInitialForm}
\end{align}
\end{itemize}

So, to make (\ref{InductionRightDistancetoh}) valid, we require
\begin{equation}
\text{Right-Hand Side of } (\ref{DsPApproximateFnInitialForm})\leq \frac{1}{6}\mu e^{-\lambda AK^{n+1}},\label{RequirementonDsPApproximateFnInitialForm}
\end{equation}
\begin{equation}
\text{Right-Hand Side of } (\ref{SecondOrderRequirementInitialForm})\leq \frac{1}{6}\mu e^{-\lambda AK^{n+1}}.
\label{RequirementonSecondOrderRequirementInitialForm}
\end{equation}
To satisfy (\ref{RequirementonDsPApproximateFnInitialForm}), we require a stronger
inequality:
\begin{equation}
36C_0^3C e^{AK^n({1+\lambda K\over r})}\leq M_n.				\label{LowerBdofMn}
\end{equation}
To estimate the right hand side of (\ref{SecondOrderRequirementInitialForm}), and to guarantee the validity of (\ref{RequirementonSecondOrderRequirementInitialForm}), we need to provide some estimates of $v_n$.
Applying (\ref{UpperBdofMn}) to (\ref{CrlBdofvnInitialForm}), and
(\ref{LowerBdofMn}) to (\ref{C0BdofvnInitialForm}), we get 
\begin{equation}
|v_n|_{r+l}\leq \mu e^{AK^{n+1}}, 			\label{CrlBdofvnComplete}
\end{equation}
\begin{equation}
|v_n|_{0}\leq 2C_0 \mu e^{-\lambda AK^{n}}.\label{C0BdofvnComplete}
\end{equation}
These estimates imply, by interpolation formula on Remark \ref{inrepIneqAbstrRmk},
\begin{align}
|v_n|_b&\leq C|v_n|_0^{1-{b\over r+l}}|v_n|_{r+l}^{b\over r+l}\nonumber\\
							&\leq C(2C_0\mu e^{-\lambda AK^n})^{1-{b\over r+l}}(\mu e^{AK^{n+1}})^{b\over r+l}\nonumber\\
							&\leq 2C C_0\mu e^{{AK^n}(-\lambda+\frac{b\lambda}{r+l}+\frac{Kb}{r+l})}.
\label{CbInductionBoundofvn}
\end{align}
We require
\begin{equation}
\lambda>\frac{(K+\lambda)b}{r+l},			\label{lambdaWeakLowerBd}
\end{equation}
then from (\ref{CbInductionBoundofvn}) we have
\begin{equation}|v_n|_b\leq 2CC_0\mu,												\label{CbBoundofvn}
\end{equation}

For convenience of computation, we want to make
\begin{equation}
(|f_n|_{r+l}+1)|v_n|_b\leq \mu e^{AK^{n+1}}.						\label{AddBlowupBound}
\end{equation}
From (\ref{CbBoundofvn}) and  (\ref{InductionLeftf}),
we find that (\ref{AddBlowupBound}) is valid if we require
\begin{equation}
(\mu e^{AK^n}+1)2CC_0\mu\leq \mu e^{AK^{n+1}}.
\end{equation}
The last inequality can be derived from
\[4CC_0\leq  e^{A(K-1)}.\]

By (\ref{C0BdofvnComplete}), (\ref{CrlBdofvnComplete}) and (\ref{AddBlowupBound}),
we find that (\ref{RequirementonSecondOrderRequirementInitialForm}) is valid if
\begin{equation}
48C_0^3\leq e^{AK^n\left(\lambda(2-K)-\frac{(\lambda+K)\chi}{r}\right)}.   \label{LowerBdofB}
\end{equation}
Note that (\ref{LowerBdofB}) is satisfied for all $n\in \EZ^+$ if
\[\lambda(2-K)>\frac{(\lambda+K)\chi}{r},\]
and
\[48C_0^3\leq e^{AK\left(\lambda(2-K)-\frac{(\lambda+K)\chi}{r}\right)}.\]
{\flushleft\bf  Step 4.}
Now we give conditions which ensure that $f_n$ stays in $\sN_{r+l}$, i.e. we need
\begin{equation}|f_n|_b\leq\epsilon.			\label{ControllingfninNeighborhood}
\end{equation}
We use $\sum_{n=1}^{\infty}|v_n|_b$ as the bound of $|f_n|_b$. This sum can be estimated as
\begin{align}
\sum_{n=1}^\infty|v_n|_b
			\leq 2C C_0\mu\sum_{n=1}^\infty e^{{AK^n}(-\lambda+\frac{b\lambda}{r+l}+\frac{Kb}{r+l})},
\end{align}
where we have used (\ref{CbInductionBoundofvn}) and (\ref{lambdaWeakLowerBd}).
So to make (\ref{ControllingfninNeighborhood}) valid, we require
\begin{equation}
\mu\leq \frac{\epsilon(1-e^{A(K-1)\left(-\lambda+\frac{b\lambda}{r+l}+\frac{Kb}{r+l}\right)})}{2CC_0}.			\label{UpperBdofNormofh}
\end{equation}

{\flushleft\bf  Step 5.} Now we analyze the convergence of $f_n$
in  $C^{B-\alpha}$ norm.  Since, using again
(\ref{CrlBdofvnComplete}), (\ref{C0BdofvnComplete})
and interpolation, we have
\begin{align}
|v_n|_{B-\alpha}&=C|v_n|_0^{1-\frac{B-\alpha}{r+l}}|v_n|_{r+l}^{B-\alpha\over r+l}\nonumber\\
				&\leq C(2C_0\mu e^{-\lambda AK^n})^{1-{B-\alpha\over r+l}}(\mu e^{AK^{n+1}})^{B-\alpha\over r+l}\nonumber\\
				&\leq 2C C_0\mu e^{{AK^n}(-\lambda+\frac{\lambda(B-\alpha)}{r+l}+\frac{K(B-\alpha)}{r+l})},			\label{B-alphaNormEstimateOct10}
\end{align}
and $f_{n+1}=f_{n}+v_n$,
it follows that $f_n$ converges with respect to $C^{B-\alpha}$ norm if
\begin{equation}
\lambda>\frac{(B-\alpha)(\lambda+K)}{r+l}. \label{lambdaStrongerLowerBd}
\end{equation}
Note that  we need $B-\alpha>b$, so (\ref{lambdaStrongerLowerBd}) is stronger
than (\ref{lambdaWeakLowerBd}).

Below,  we collect the requirements stated in all steps, and show how to
satisfy all of them.
\begin{itemize}
\item {Step 1:}
\begin{equation}
(3C)^\frac 1B
e^{AK^n{\lambda K\over B}}\leq N_n\leq
\frac{1}{C^\frac 1{r-B}}e^{AK^n\frac{1}{r-B}},		\text{ for all } n\in \EZ^+,	\label{S11}
\end{equation}
\begin{equation}
|h|_B\leq \mu;																					\label{S12}
\end{equation}
\item {Step 2:}
\begin{equation}
2|h|_0e^{\lambda AK}\leq \mu;																\label{S2}
\end{equation}
\item {Step 3:}
\begin{equation}
\lambda>{b\over r-b},																			\label{S31}
\end{equation}
\begin{equation}
4CC_0\leq e^{A(K-1)},	        																		\label{S32}
\end{equation}
\begin{equation}
36C_0^3 C e^{AK^n\left(\frac{1+\lambda K}{r}\right)}\leq M_n\leq
\frac{1}{(12C_0^2C)^\frac 1l}e^{AK^n\left({K-1\over l}\right)}	,	\text{ for all } n\in \EZ^+	,	\label{S33}
\end{equation}
\begin{equation}
\lambda(2-K)>\frac{(\lambda+K)\chi}{r}														\label{S34-0.5}
\end{equation}
\begin{equation}
48C_0^3\leq e^{AK\left(\lambda(2-K)-\frac{(\lambda+K)\chi}{r}\right)};							\label{S34}
\end{equation}
\item {Step 4:}
\begin{equation}\mu\leq \frac{\epsilon(1-e^{A(K-1)\left(-\lambda+\frac{b\lambda}{r+l}+\frac{Kb}{r+l}\right)})}{2CC_0}; \label{S41}
\end{equation}
\item {Step 5:}
\begin{equation}\lambda>\frac{(B-\alpha)(\lambda+K)}{r+l}.							\label{S51}
\end{equation}
\end{itemize}
Now we discuss how to choose parameters to satisfy (\ref{S11})--(\ref{S51}).
Note that (\ref{IndexOrderMoserTheorem})-(\ref{ConvergeWRTBalphaNorm}) are
 satisfied. If we let
\[K=1+\frac{2l}{r}, \ \ \ \lambda=\frac{B}{K^2(r-B)},\]
then
(\ref{S31}) directly follows from (\ref{labelB6-feldm}), also
(\ref{S34-0.5}) follows from (\ref{BGreaterthanchi}), and
(\ref{S51})  follows from (\ref{ConvergeWRTBalphaNorm}).
Furthermore, we can choose $A$ big enough, such that
(\ref{S32}), (\ref{S34}) and
\begin{equation}
(3C)^\frac 1B e^{AK\frac{\lambda K}{B}}\leq
{1\over C^\frac 1{r-B}}e^{AK{1\over r-B}}.
\label{S11PuncturedStart}
\end{equation}
\begin{equation}
36C_0^3 C e^{AK\left(\frac{1+\lambda K}{r}\right)}\leq \frac{1}{12C_0^2C}e^{AK\left({K-1\over l}\right)},	\label{S33PuncturedStart}
\end{equation}
are satisfied. Note that
to get (\ref{S34}) we have used (\ref{S34-0.5}), and
to get (\ref{S33PuncturedStart}) we have used (\ref{MC_130_20180220_1752}).
Validity of (\ref{S11PuncturedStart}) and (\ref{S33PuncturedStart}) implies for all $n\in \EZ^+,$
\begin{equation}
(3C)^\frac 1B
e^{AK^n{\lambda K\over B}}\leq\frac{1}{C^\frac 1{r-B}}e^{AK^n\frac{1}{r-B}},			\label{S11Punctured}
\end{equation}
\begin{equation}
36C_0^3 C e^{AK^n\left(\frac{1+\lambda K}{r}\right)}\leq \frac{1}{12C_0^2C}e^{AK^n\left({K-1\over l}\right)}.	\label{S33Punctured}
\end{equation}
So, for each $n\in \EZ^+$, we can find $N_n, M_n$ satisfying (\ref{S11}) and (\ref{S33}).

Finally, if
\[|h|_B\leq\min\left\{ \frac{\epsilon(1-e^{A(K-1)
\left(-\lambda+\frac{b\lambda}{r+l}+\frac{Kb}{r+l}\right)})
e^{-\lambda AK}}{4CC_0}, \frac{1}{4}e^{-\lambda AK}\right\},\]
we can choose
\begin{align}\mu=2|h|_Be^{\lambda Ak}<1,   			\label{ValueofmuOct10}
\end{align} satisfying (\ref{S12}) (\ref{S2}) (\ref{S41}).

Now all parameters has been determined.

Being able to choose these parameters means that we can construct a sequence $f_n\in \sF^{r+l}$ (actually in $\sF^\infty$) and
$f\in \sF^{B-\alpha}$, such that as $n\rightarrow \infty$,
\[|f_n-f|_{B-\alpha}\rightarrow 0, \ \ |\sP(f_n)-h_n|_0\rightarrow 0, \ \ |h_n-h|_0\rightarrow 0,\]
which implies
\begin{align}
	&|\sP(f)-h|_0\nonumber\\
\leq&|\sP(f)-\sP(f_n)|_0+|\sP(f_n)-h_n|_0+|h_n-h|_0\nonumber\\
\leq&C_0|f_n-f|_b+\mu e^{-\lambda AK^n}+\frac{1}{3}\mu e^{-\lambda AK^{n+1}}\rightarrow 0,\nonumber
\end{align}
so,
\[\sP(f)=h.\]
Also, by (\ref{B-alphaNormEstimateOct10}) and (\ref{ValueofmuOct10}),
we have
\begin{equation*}
|f|_{B-\alpha}\leq C\cdot \mu<C|h|_B.				
\end{equation*}
This verifies (\ref{NormEstimateinMoserTheoremOct10}).
\begin{flushright}{$\blacksquare$}

\end{flushright}

\section*{Acknowledgements}
The research of Xiuxiong Chen was supported in part by the National Science Foundation under Grant DMS-1515795 and DMS-1603351.
The research of Mikhail Feldman was supported in part by the National Science Foundation under Grants DMS-1401490, DMS-1764278 and the Van Vleck Professorship Research Award by the University of Wisconsin-Madison.
The third named author Jingchen Hu was supported by the National Natural Science Foundation of China (grant no. 11571330 and 11271343) and the Fundamental Research Funds for the Central Universities, he wishes to thank his collaborators Jiyuan Han, Jingrui Cheng, Prof. Bing Wang of UW-Madison,  Guohuan Qiu, Bin Deng, Prof. Bin Xu of USTC and Long Li in Universit\'e de Grenoble Alpes  for very helpful discussion.

Xiuxiong Chen\\ School of Mathematical Sciences,  University of Science and Technology of China, Hefei, Anhui, China, 230026;\\
Institute of Mathematical Sciences, ShanghaiTech University, 393 Middle Huaxia Road, Pudong, Shanghai, China, 201210;\\
Math Department, Stony Brook University, Stony Brook, NY 11794-3660.\\Email address: xiu@math.sunysb.edu\\

Mikhail Feldman\\ Department of Mathematics, University of Wisconsion-Madison, Madison, WI, USA, 53705.\\
E-mail address: mfeldman2@wisc.edu\\

Jingchen Hu\\ Institute of Mathematical Sciences, ShanghaiTech University, 393 Middle Huaxia Road, Pudong, Shanghai, China, 201210.\\
E-mail address: jingchenhoo@gmail.com


\begin{thebibliography}{99}
\bibitem{BandoMabuchi}  S. Bando, T. Mabuchi, {\em Uniqueness of Einstein K\"ahler Metrics Modulo Connected
Group Actions}. In Algebraic Geometry, Advanced Studies in Pure Mathematics, 1987.

\bibitem{BerndtssonBerman} Robert J. Berman, Bo Berndtsson, {\em Convexity of the K-energy on the space of K\"ahler metrics and uniqueness of extremal metrics.} J. Amer. Math. Soc. 30 (2017), 1165-1196.




\bibitem{BermanBundle} Berman R.J. {\em On the Optimal Regularity of Weak Geodesics in the Space of Metrics on a Polarized Manifold.} In: Andersson M., Boman J., Kiselman C., Kurasov P., Sigurdsson R. (eds) Analysis Meets Geometry. Trends in Mathematics. Birkh\"auser, Cham, 2017.

\bibitem{calabi58} E. Calabi, {\em  Improper Affine Hyperspheres of Convex Type and a Generalization of a Theorem by K. J\"orgens.} Mich. Math. J. 5, 105-126(1958).


\bibitem{calabi82}
E. Calabi,
\newblock Extremal K\"ahler metrics.
\newblock Seminar on Differential Geometry, Ann. of Math. Stud., 102(1982), 259-290, Princeton University Press.

\bibitem{calabi85}
E. Calabi,
\newblock Extremal K\"ahler metrics(II).
\newblock Differential Geometry and complex analysis, 1985, 95-114, Springer, Berlin.


\bibitem{ChenPrinceton} X. Chen, {\em The Space of K\"ahler Metrics,} J. Differential Geometry, 56 (2000) 189-234.

\bibitem{cc17} X. Chen, J. Cheng {\em Estimates for constant scalar curvature Kahler metrics with applications to existence,  I-III.}  arXiv: 1712.06697, arXiv: 1801.00656, arXiv: 1801.05907.

\bibitem{cds12} X.Chen, S. K. Donaldson, S. Sun. {\em K\"ahler-Einstein metrics on Fano manifolds, I-III. } J. Amer. Math. Soc. 28 (2015), 183-197, 199-234, 235-278.


\bibitem{HeWeiyong} X. Chen, W. He, {\em The Space of Volume Forms,} Inter. Math. Research Notices,  Vol. 2011, No. 5, pp.967-1009.


\bibitem{ChenPaunLi}X. Chen, M. Paun, L. Li, {\em Approximation of weak geodesics and subharmonicity of Mabuchi energy,} Ann. Fac. Sci. Toulouse Math. 25(5), 935–957 (2016).
\bibitem{ChenPaunZeng} X. Chen, M. Paun, Y. Zeng, {\em On deformation of extremal metrics,} arxiv:1506.01290.


\bibitem{ChenXXTianGang} X. Chen, G. Tian, {\em Geometry of K\"ahler metrics and foliations by holomorphic discs}
Publications Mathématiques de l'IHÉS, Tome 107 (2008), p. 1-107.


\bibitem{ChirkaBoundaryRegularity} E. M. Chirka, {\em Regularity of the Boundaries of Analytic Sets,} Math. USSR Sbornik, Vol.45(1983)  No.3.
\bibitem{JianchunNonIntegrable} J. Chu, V. Tosatti, B. Weinkove, {\em The Monge-Ampere Equation for
non-Integrable Almost Complex Structures, } to appear in J. Eur. Math. Soc.
\bibitem{Jianchun} J. Chu, V. Tosatti, B. Weinkove, {\em On the $C^{1,1}$ regularity of geodesics in the space of K\"aler metrics, }
arXiv:1611.02390.
\bibitem{JianchunDegenerateBackground} J. Chu, V. Tosatti, B. Weinkove, {\em $C^{1,1}$ Regularity for Degenerate Complex Monge-Ampere Equations and Geodesic Rays}, arXiv: 1707.03660v2.

\bibitem{RubinsteinDarvas} T. Darvas, Y.A. Rubinstein, {\em Kiselman's principle, the Dirichlet problem for the Monge-Ampere equation, and rooftop obstacle problems.} J. Math. Soc. Japan 68 (2016), 773-796.

\bibitem{Demailly} J-P. Demailly, {\em Variational Approach for Complex Monge-Ampere Equations and Geometric Applications, }
S\`eminaire Bourbaki, Mars 2016, No. 1112.


\bibitem{Dona96} S. K. Donaldson, {\em Symplectic submanifolds and almost-complex geometry,} Journal of Differential Geometry,44, 1996, 666-705.



\bibitem{DonaldsonMomentMap}S. K. Donaldson {\em Moment Maps and Diffeomorphisms } Asian Journal of Mathematics, 1999
\bibitem{DonaldsonSymmetricSpace} S. K. Donaldson, {\em Symmetric Spaces, K\"ahler Geometry and Hamiltonian Dynamics,} Amer. Math. 	  		Soc. Transl. (2) Vol 196. 1999.
\bibitem{DonaldsonHolomorphicDiscs} S. K. Donaldson, {\em Holomorphic Discs and Complex Monge-Ampere Equation,} Journal of Symplectic Geometry, Volume 1, Number 2, 171-196, 2002.

\bibitem{Donaldson2001Project} S.K. Donaldson, {\em Scalar curvature and projective embeddings, I} J. Diff. Geom. 59 (2001), 479–452.

\bibitem{Nahm} S. K. Donaldson, {\em Nahm's equations and free-boundary problems,} The many facets of geometry, 71-91,  Oxford Univ. Press, Oxford, 2010.

\bibitem{GT} D. Gilbarg, N. S. Trudinger, {\em Elliptic Partial Differential Equations of Second Order,}  Springer-Verlag, Berlin, 2001.
\bibitem{Hamilton} R. S. Hamilton, {\em The Inverse Function Theorem of Nash and Moser,} Bulletin of American Mathematical Society, Vol. 7, No. 1, July 1982.

\bibitem{HormanderPhysicalGeodesy} L. H\"ormander, {\em The Boundary Problems of Physical Geodesy,} Arch. Rat. Mech. Anal. 62 (1976), 1-52.

\bibitem{HormanderonNashMoser} L. H\"ormander, {\em On the Nash-Moser Implicit Function Theorem,} Annales Academi\ae\ Scientiarum Fennic\ae. Series A. I. Mathematica Volumen 10, 1985, 255-259.

\bibitem{RegularityofGeodesics} J. Hu, {\em An Obstacle for Higher Regularity of Geodesics in the Space of K\"ahler Potentials,} arXiv:1808.03498v2.



\bibitem{Kerzman} N. Kerzman, {\em A Monge-Amp\`ere Equation in Complex Analysis,} Proc. of Symp. in Pure Math. Vol. 30-Part I,  161-167, (1977).

\bibitem{KrantzOntology} S. Krantz, {\em An Ontology of Directional Regularity Implying Joint Regularity.} Real Analysis Exchange, Vol. 34(2), 2008/2009, pp. 255-266.

\bibitem{LempertLizVivas} L. Lempert and L. Vivas, {\em Geodesics in the Space of K\"ahler Metrics,} Duke Math. Journal, Vol. 162, No. 7, 2013.

\bibitem{LempertDarvas} T. Darvas and L. Lempert, {\em Weak Geodesics in the Space of K\"ahler Metrics.} Math. Res. Lett. 19 (2012), no. 5, 1127-1135.

\bibitem{Luzzatto} S. Luzzatto, S. Tureli, K. War, {\em Integrability of Continuous Bundles.} Crelles Journal, 	arXiv:1606.00343.

\bibitem{Mabuchi} T. Mabuchi, {\em Some symplectic geometry on compact K\"ahler manifolds,} I. Osaka J. Math. 24 (1987), no. 2, 227-252.

\bibitem{MabuchiCPZ} T. Mabuchi, {\em Some Sympletic geometry on compact Kähler metrics}. J. Diff.
Geom. 21(1985) 73-77.


\bibitem{MoriyonProceeding} R. Moriy\'on, {\em  Regularity of the Dirichlet Problem for the Complex Monge-Ampere Equation,} Proc. Natl. Acad. Sci. USA, Vol. 76, No. 3, pp. 1022-1023, March 1979, Mathematics.
\bibitem{MoriyonMadrid} R. Moriy\'on, {\em Regularity of the Dirichlet Problem for the Degenerate Complex Monge-Ampere Equation,} Comm. on Pure and Applied Math., Vol. XXXV, 1-27(1982).

\bibitem{MoserConstruction} J. Moser, {\em A New Technique for the Construction of Solutions of Nonlinear Differential Equations,} Proc. Nat. Acad. Sci. USA 47, 1824-1831 (1961).
\bibitem{MoserIterationFirst} J. Moser, {\em A Rapidly Convergent Iteration Method and Nonlinear Partial Differential Equations-I, } Ann. Scuola Norm. Sup. Pisa (3) 20, 265-315, (1966).



\bibitem{NashImbedding} J. Nash, {\em The Imbedding Problem for Riemannian Manifolds,} Annals of Math. 63 (1956) 20-63.




\bibitem{JuliusWittCHMA} J. Ross, D.W. Nystr\"om, {\em Harmonic Discs of Solutions to the Complex Homogeneous Monge-Ampere Equations}, Publications Mathématiques De Lihés, 2015, 122 (1): 315-335.

\bibitem{JuliusWittHeleShawFlow} J. Ross, D.W. Nystr\"om, {\em The Hele-Shaw Flow and Moduli of Holomorphic Discs.} Compositio Math. 151 (2015), 2301-2328.

\bibitem{JuliusWittMaxRank} J. Ross, D.W. Nystr\"om, {\em On the Maximal Rank Problem for the Complex Homogeneous Monge-Amp\`ere Equation},  arXiv:1610.02280v1.

\bibitem{RubinsteinZelditchToricVariety} Y.A. Rubinstein, S. Zelditch, {\em The Cauchy Problem for the Homogeneous Monge-Ampere Equation, II. Legendre Transform}, Adv. Math. 228 (2011), 2989-3025.

\bibitem{RubinsteinZelditchAnalytisity} Y.A. Rubinstein, S. Zelditch, {\em The Cauchy Problem for the Homogeneous Monge-Ampere Equation, III. Lifespan.} Crelle's J. 724 (2017), 105-143.

\bibitem{Semmes} S. Semmes, {\em Complex Monge-Ampere and Symplectic Manifolds,} American Journal of Mathematics, Vol. 114, No. 3. (Jun., 1992), pp. 495-550.

\bibitem{Yau10} S.T. Yau, {\em On the Ricci curvature of a compact K\"ahler manifold and the complex Monge‐Amp\`ere equation, I, } Comm. on Pure and Appl. Math., Volume 31, Issue 3, May 1978, Pages 339-411.
\end{thebibliography}
\end{document}